\documentclass[11pt,a4paper,reqno]{amsart}
\fontsize{6.5pt}{6.5pt}\selectfont 
\makeatletter
\def\@evenhead{{\fontsize{6.5pt}{6.5pt}\selectfont \hfil \leftmark\hfil\thepage}}
\def\@oddhead{{\fontsize{6.5pt}{6.5pt}\selectfont\hfil\rightmark \hfil\thepage}}
\makeatother

\usepackage{amsfonts}
\usepackage{booktabs}
\usepackage{amsmath,amssymb,amsthm,amsxtra}
\usepackage{mathtools}
\usepackage{float}
\usepackage{amssymb}
\usepackage{mathabx}
\usepackage{bbm}
\usepackage[colorlinks, linkcolor=blue,anchorcolor=Periwinkle,
citecolor=Red,urlcolor=Emerald]{hyperref}
\usepackage[usenames,dvipsnames]{xcolor}
\usepackage{enumitem}
\usepackage{geometry,array} 
\usepackage{graphicx}
\usepackage{subfigure}
\usepackage{bookmark}
\usepackage{tikz}\usetikzlibrary{matrix}
\usepackage{url}
\usepackage{dsfont}
\usepackage[colorinlistoftodos]{todonotes}
\usepackage{tablists} \restorelistitem
\usepackage{adjustbox}
\usepackage{capt-of}

\usetikzlibrary{decorations.markings}
\tikzset{->-/.style={decoration={  markings,  mark=at position #1 with
    {\arrow{>}}},postaction={decorate}}}
\tikzset{-<-/.style={decoration={  markings,  mark=at position #1 with
    {\arrow{<}}},postaction={decorate}}}
\usepackage{extarrows}
\usepackage[all]{xy}
\usepackage{setspace}
\usepackage{thmtools}
\usepackage{thm-restate}
\usepackage{hyperref}
\usepackage{cleveref}
\usepackage{multirow}

\newcommand{\sgn}{\operatorname{sgn}}

\newcommand{\diag}{\operatorname{diag}}
\newcommand{\mfb}{\mathbf{b}}

\newcommand{\mfx}{\mathbf{x}}

\newcommand{\mcA}{\mathcal{A}}

\newcommand{\mcF}{\mathcal{F}}

\newcommand{\mcT}{\mathcal{T}}
\newcommand{\mcU}{\mathcal{U}}

\newcommand{\mcZ}{\mathcal{Z}}
\newcommand{\mbN}{\mathbb{N}}

\newcommand{\mbQ}{\mathbb{Q}}
\newcommand{\mbR}{\mathbb{R}}

\newcommand{\mbT}{\mathbb{T}}

\newcommand{\mbZ}{\mathbb{Z}}
\theoremstyle{plain}
\newtheorem{theorem}{Theorem}[section]

\newtheorem{lemma}[theorem]{Lemma}

\newtheorem{proposition}[theorem]{Proposition}
\newtheorem{conjecture}[theorem]{Conjecture}
\theoremstyle{definition}
\newtheorem{definition}[theorem]{Definition}

\newtheorem{example}[theorem]{Example}

\newtheorem{remark}[theorem]{Remark}

\newtheorem{question}[theorem]{Question}

\numberwithin{equation}{section}
\newtheorem{definition-proposition}[theorem]{Definition-Proposition}

\begin{document}

\title{Mutation-preserving generalized cluster algebras and Laurent mutation invariants}

\date{\today}

\author{Zhichao Chen}
\address{School of Mathematical Sciences\\ University of Science and Technology of China \\ Hefei, Anhui 230026, P. R. China.}
\email{czc98@mail.ustc.edu.cn}
\author{Yimin Huang}
\address{School of Mathematical Sciences\\ Fudan University\\ Shanghai 200082, P. R. China.}
\email{1103165219@qq.com} 

%=====================================
\begin{abstract}
	We introduce mutation-preserving generalized cluster algebras, for which the generalized cluster mutation in each direction is independent of the seed in the mutation equivalence class. We classify all irreducible generalized cluster algebras with this property. Then, a Markov-type Diophantine equation
	 $x^2+y^2+z^2+2yz=kxyz$ is studied, which has a structure of the mutation-preserving generalized cluster algebra.
	We prove that positive integer solutions exist if and only if $1\leq k\leq 5$ and determine all their orbits under the associated generalized cluster mutation groups. In particular, multiple orbits occur for each $k=1,3$, whereas the solutions form a single orbit for each $ k=2,4,5$. We then classify all generalized Markov Laurent mutation invariants. As an application, a conjecture proposed by Chen-Li is proved, showing that every Laurent mutation invariant of irreducible sign-equivalent cluster algebras is essentially a polynomial in the corresponding basic invariant.\\\\
	\textbf{Keywords:} Mutation-preserving generalized cluster algebras, Laurent mutation invariants, Markov-type Diophantine equations, generalized cluster mutation groups.\\
	2020 Mathematics Subject Classification: 13F60, 13A50, 11D25. 
\end{abstract}
\maketitle
%=======================================
\vspace{-2.25\baselineskip}
\tableofcontents
%=======================================
\newpage
\section{Introduction}\label{sec: introduction}

Cluster algebras were introduced by Fomin and Zelevinsky \cite{FZ02,FZ03} in connection with total positivity in Lie groups and canonical bases in quantum groups. Since then, more applications were found in representation theory \cite{BMRRT06,BIRS09, DWZ08, DWZ10, HL10,KY11}, higher Teichm\"uller theory \cite{FG09, GS26}, Poisson geometry \cite{GSV03, GSV12}, integrable systems \cite{KNS11}, commutative algebra \cite{Mul13,Mul14}, combinatorics \cite{FR05,NZ12,LLZ14,RS18,GHKK18,FG19} and number theory \cite{Pro20, PZ12, Lam16, Hua22, LLRS23, BL24, Kau24, CL24, CL25, BL25, CJ25, Mus25, GKW25, BH26}. 

Generalized cluster algebras, introduced by Chekhov-Shapiro \cite{CS14} and Nakanishi \cite{Nak15}, extend classical cluster algebras by replacing binomial exchange relations with higher degree exchange polynomials. They encode the data of generalized seeds $(B;R;\mcZ)$ and generalized mutations, where $B$ is a generalized exchange matrix, $R$ is an exchange degree matrix, and $\mcZ$ is a collection of exchange polynomials. Despite this greater flexibility, they share two fundamental properties with classical cluster algebras: the Laurent phenomenon and positivity. Namely, every generalized cluster variable is a Laurent polynomial in the variables of any initial generalized cluster with non-negative coefficients \cite{CS14, Nak15, BLM25}. Thus, generalized cluster algebras preserve essential features of classical cluster theory while providing a natural framework for mutation phenomenon beyond the classical setting; see also \cite{NR16,Nak23}.

In a generalized cluster pattern, mutation at each seed induces a rational transformation of the ambient rational function field. In general, this transformation may depend on the seed, since both the exchange matrix and the exchange polynomial can change under generalized mutation. We call a generalized cluster algebra \emph{mutation-preserving} if, in each fixed direction, the induced rational transformation is the same at every seed in the mutation equivalence class (\Cref{def: mutation-preserving}). In this case, these seed-independent transformations  induce a group action on the rational function field. Rational functions invariant under this action, called \emph{generalized mutation invariants}, then provide a natural bridge between generalized cluster algebras and Diophantine equations; see \Cref{mutation invariant} and \Cref{equivalent}.

Generalized Markov equations (\Cref{equ: gme}) studied in \cite{GM23} indeed admit mutation-preserving generalized cluster algebraic structures. For these equations, all positive integer solutions are generated from $(1,1,1)$ by generalized cluster mutations and hence form a single orbit of the generalized cluster mutation group $\widehat{\Gamma}$; see \Cref{def: gmp}.  
\begin{question}\label{question}
This leads to the following three natural questions.
\begin{enumerate}
	\item What is the classification of mutation-preserving generalized cluster algebras?
	\item Is there some Diophantine equation whose positive integer solutions can be generated from an initial solution distinct from $(1,1,1)$ under the action of the generalized cluster mutation group $\widehat{\Gamma}$? Moreover, does there exist a Diophantine equation whose positive integer solutions can be decomposed into multiple $\widehat{\Gamma}$-orbits of initial solutions?
	\item Classify the generalized Laurent mutation invariants associated with the generalized Markov equations. In particular, solve the conjectures of Chen-Li in \cite{CL25, CL24}.
\end{enumerate}
\end{question}

Our first objective is to classify all irreducible mutation-preserving generalized cluster algebras. The problem is closely related to sign-equivalent exchange matrices, whose mutation equivalence classes contain only two elements (\Cref{def sign}). The irreducible sign-equivalent exchange matrices were classified in \cite{CL25}. Combining this result with exchange degree matrices and  exchange polynomials, we obtain the following complete classification.

\begin{theorem}[= \Cref{thm: classification}]
	All the irreducible mutation-preserving generalized cluster algebras $\mcA_{\mathbf{GCA}}=\mcA(B;R;\mcZ)$ are as follows:
	\begin{enumerate}
		\item $\mcA(B;R;\mcZ)$ is of rank $2$ with arbitrary coefficients for $\mcZ$.
		\item $\mcA(B;R;\mcZ)$ is of rank $3$ with the pair $(B;R)$ given in \Cref{tab: rank-three-classification} except (II.2) and arbitrary coefficients for $\mcZ$.
		\item $\mcA(B;R;\mcZ)$ is of rank $3$ with the pair $(B;R)$ given in (II.2) and the coefficients for $\mcZ$  satisfy the reciprocity condition. 
	\end{enumerate}
\end{theorem}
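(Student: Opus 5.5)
The plan is to reduce mutation-preservation to a condition on the generalized exchange matrix and then on the exchange polynomials. In a generalized seed $(\mfx; B; R; \mcZ)$ the mutation in direction $k$ is
\[
x_k' = x_k^{-1}\, \prod_j x_j^{[-\sgn(b_{jk})]_+ r_k}\; Z_k\Bigl(\prod_j x_j^{b_{jk}}\Bigr)
\]
(or the paper's normalization). This rational map depends only on the $k$-th column of $B$ and on $Z_k$, up to the usual sign convention. Mutating at $k$ negates the $k$-th column and reverses $Z_k$; mutating at $j\neq k$ changes column $k$ by the matrix-mutation rule. Hence the first step is this observation: the map $\mu_k$ is seed-independent if and only if
\begin{itemize}
\item every seed in the class has the same $k$-th column up to a global sign, and
\item whenever that column flips sign, the polynomial attached to it transforms so that the induced map coincides.
\end{itemize}
Concretely, if one seed has column $-b_k$ and polynomial $\tilde Z_k$, then $\tilde Z_k$ must be the reciprocal of $Z_k$. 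If only the pair $(B,R)$ is involved, this is exactly the requirement that $DB$ (with $D$ built from $R$) be sign-equivalent in the sense of \Cref{def sign}. The mutation equivalence class must then consist only of $\pm$-column-sign variants of $B$, i.e.\ of $B$ and $-B$ in the irreducible case.

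Second, I would use the classification of irreducible sign-equivalent exchange matrices from \cite{CL25}. In rank $2$ every matrix is sign-equivalent, since $\mu_1 B = \mu_2 B = -B$. Mutation at $k$ also replaces $Z_k$ by its reciprocal, so the map at the new seed in direction $k$ uses column $-b_k$ together with the reciprocal polynomial, which yields the same rational transformation. Therefore no condition is needed on the coefficients, which gives case (1). In rank $3$ the \cite{CL25} list, combined with the compatibility constraints on $R$ (the skew-symmetrizability of $BR$ or $RB$, depending on the convention), produces exactly the pairs of \Cref{tab: rank-three-classification}. In rank $\ge 4$ the \cite{CL25} classification gives no irreducible sign-equivalent matrices, so those ranks are excluded.

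Third, I would treat the polynomials in rank $3$. For each pair $(B;R)$ in the table I would compute $\mu_j$ for $j\ne k$ and track how $Z_k$ changes. In the generalized setting, mutation at $j$ leaves $Z_k$ unchanged, while mutation at $k$ reverses it. The column $b_k$ may become $-b_k$ after mutating at $j\neq k$. When that happens, $Z_k$ has not been reversed, so seed-independence forces $Z_k$ to equal its own reciprocal, which is the reciprocity condition.

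I expect the case analysis to show this happens exactly for (II.2). In the other rows, every sign flip of column $k$ occurs together with a reversal of $Z_k$, or, for $r_k=1$, no nontrivial coefficients are present. Conversely, I would verify sufficiency by checking the finitely many seeds in each class ($B$ and $-B$, with polynomials either $Z$ or reversed).

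The main obstacle is this bookkeeping for (II.2). I have to determine precisely which compositions of mutations send $b_k$ to $-b_k$ without passing through $\mu_k$, and confirm that reciprocity is both necessary and sufficient there and unnecessary elsewhere. I would first try to settle this by examining $\mu_j\mu_i$ for $\{i,j,k\}=\{1,2,3\}$ directly on the table's matrices.
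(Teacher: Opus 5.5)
Your skeleton (compare exponents column by column, pass to the classical matrix $BR$, invoke the Chen--Li list, then impose a condition on $\mcZ$) is the paper's. However, the step that decides the condition on $\mcZ$ rests on a wrong mechanism, and that step is exactly what separates items (1)--(3). Once the class of an irreducible $B$ is $\{B,-B\}$, \emph{every} mutation $\widehat\mu_j$ sends $B$ to $-B$: it negates the nonzero column $j$, so it cannot fix $B$. Hence for each $k$ and each $j\neq k$, the seeds $t_0$ and $t_1=\widehat\mu_j(t_0)$ carry opposite $k$-th columns but the \emph{same} $Z_k$. Put $P_{\pm}=\prod_i x_i^{[\pm b_{ik;t_0}]_+}$. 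The two numerators are then $\sum_s z_{k,s}P_+^{s}P_-^{r_k-s}$ and $\sum_s z_{k,s}P_-^{s}P_+^{r_k-s}$, so $\widehat\mu_{k;t_0}=\widehat\mu_{k;t_1}$ holds if and only if $z_{k,s}=z_{k,r_k-s}$ for all $s$. There is therefore nothing to track about ``which compositions flip $b_k$ without passing through $\mu_k$'': every single $\widehat\mu_j$ with $j\neq k$ does so, in every row of \Cref{tab: rank-three-classification}. Your expectation that in the rows other than (II.2) each flip of column $k$ comes with a reversal of $Z_k$ is false. For example, in (I.2) $\widehat\mu_2$ flips column $1$ and leaves $Z_1=1+\gamma u+u^2$ unchanged. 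The real reason reciprocity only matters in (II.2) is that the monic condition makes $Z_k$ automatically palindromic whenever $r_k\le 2$ (not only when $r_k=1$). (II.2) is the only row with an exchange degree exceeding $2$, namely $4$. This is what the paper uses when it lists the possible shapes of $Z_k$.

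The same oversight invalidates your argument for item (1). You only compare seeds joined by an edge labelled $k$, but in rank $2$ both $\widehat\mu_1$ and $\widehat\mu_2$ send $B$ to $-B$. The computation above then forces $Z_k$ to be palindromic whenever $r_k\ge 3$. Concretely, take $B=\begin{pmatrix}0&1\\-1&0\end{pmatrix}$, $r_1=3$ and $Z_1(u)=1+2u+u^3$. The direction-$1$ map at $t_0$ is $x_1\mapsto(1+2x_2^2+x_2^3)/x_1$, whereas at $\widehat\mu_2(t_0)$ it is $x_1\mapsto(1+2x_2+x_2^3)/x_1$. So in rank $2$, ``arbitrary coefficients'' is only correct when $r_1,r_2\le 2$; in general the reciprocity condition is required, exactly as in (3). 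The paper's proof does not close this gap either. Its claim that every $Z_k$ is $1+u$, $1+\gamma u+u^2$ or the (II.2) quartic is read off from the rank-$3$ table and says nothing about rank $2$, where $R$ is unconstrained. You should therefore not expect to prove item (1) as literally stated.

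Separately, you only assert that ``each column of $BR$ is preserved up to sign'' upgrades to ``the class is $\{BR,-BR\}$''. This is where irreducibility is used, and it needs the argument of the paper's Step 3. For $t_1=\mu_k(t_0)$, each column $j$ with $b_{kj}\neq0$ has a negated nonzero entry, so the whole column is negated. Each row $h$ with $b_{hk}=0$ is left unchanged by $\mu_k$, which forces $b_{hj}=0$ for all those $j$. This gives a zero off-diagonal block unless column $k$ has no zero off-diagonal entries. Hence $\mu_k(BR)=-BR$ for every $k$, and only then does the Chen--Li list apply.
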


The interaction between cluster mutations and Diophantine equations goes back to the classical Markov equation \cite{Mar80}
\begin{align*}
x^2+y^2+z^2=3xyz.
\end{align*}
Its positive integer solutions are generated from $(1,1,1)$ by Vieta jumping, which coincide with the mutations of the once-punctured torus cluster algebra; see \cite{Pro20,BBH11,PZ12,LLRS23,Hua22}. Thus, the cluster structure organizes the positive integer solutions into a mutation orbit. Lampe \cite{Lam16} found a similar cluster structure for another Markov-type equation, and subsequent work further developed generalized Markov equations and mutation invariants \cite{GM23, BL24, Kau24, CL24,CL25, CJ25, BL25, Mus25, GKW25}. These developments suggest that generalized mutation invariants provide a natural framework not only for generating positive integer solutions, but also for understanding their orbit decompositions. This motivates us to seek, in the setting of mutation-preserving generalized cluster algebras, a Markov-type equation exhibiting the more intricate orbit structure proposed in \Cref{question}.

Our second objective is to answer this question by considering the generalized cluster algebra of type (I.2) in \Cref{tab: rank-three-classification}.
Its generalized mutation invariant gives rise to the Markov-type Diophantine equation
\begin{align}\label{eq: main introduction}
x^2+y^2+z^2+2yz=kxyz,
\end{align}
where $k\in \mbN$. Using generalized cluster mutations together with a descent on the maximal component of solutions, we determine precisely when \Cref{eq: main introduction} has positive integer solutions and classify all their orbits of solutions; see also \Cref{table: markov-type equation}.

\begin{theorem}[= \Cref{thm: main} and \Cref{thm: 12345}]
Equation \eqref{eq: main introduction} has a positive integer solution if and only if $1\leq k\leq5$. More precisely, \begin{enumerate}
	\item For $k=1$, its positive integer solutions form four generalized cluster mutation orbits: $\widehat{\Gamma}[(9,3,6)],\ \widehat{\Gamma}[(9,6,3)],\
\widehat{\Gamma}[(8,4,4)],\
\widehat{\Gamma}[(5,5,5)]$;
\item For $k=3$, its positive integer solutions form two generalized cluster mutation orbits: $\widehat{\Gamma}[(3,2,1)],\ \widehat{\Gamma}[(3,1,2)]$;
	\item For $k=2,4,5$,  the positive integer solution set is the single orbit  represented respectively by $
\widehat{\Gamma}[(4,2,2)],\ \widehat{\Gamma}[(2,1,1)],\ \widehat{\Gamma}[(1,1,1)]$.
\end{enumerate}
\end{theorem}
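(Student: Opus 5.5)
The proof is a Vieta-jumping descent on the sum of the coordinates, followed by a finite search for the minimal solutions and an argument separating the resulting orbits. For the invariant $F=x^2+(y+z)^2-kxyz$ of type (I.2), I expect the three generalized cluster mutations (which I would confirm against the exchange polynomials of (I.2)) to act on a solution by
\begin{align*}
\mu_1(x,y,z)&=(kyz-x,\,y,\,z)=\Bigl(\tfrac{(y+z)^2}{x},\,y,\,z\Bigr),\\
\mu_2(x,y,z)&=(x,\,kxz-2z-y,\,z)=\Bigl(x,\,\tfrac{x^2+z^2}{y},\,z\Bigr),\\
\mu_3(x,y,z)&=(x,\,y,\,kxy-2y-z)=\Bigl(x,\,y,\,\tfrac{x^2+y^2}{z}\Bigr).
\end{align*}
Each formula on the left is the other root of $F$ viewed as a quadratic in one variable. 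The polynomial form shows that integrality is preserved, and the quotient form shows that positivity is preserved. Hence $\widehat{\Gamma}$ acts on the set of positive integer solutions.

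\textbf{Step 1 (descent).} Call a solution \emph{fundamental} if no mutation decreases the coordinate sum. By the quotient forms, this means
\[
x\le y+z,\qquad y^2\le x^2+z^2,\qquad z^2\le x^2+y^2 .
\]
For any non-fundamental solution, some mutation strictly lowers the positive integer $x+y+z$. Therefore every orbit contains a fundamental solution, and it suffices to classify fundamental solutions.

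\textbf{Step 2 (bounding fundamental solutions).} The equation is symmetric under $y\leftrightarrow z$, so assume $y\ge z$ while bounding (but keep both orderings when listing orbits). From $x\le y+z\le 2y$ we get
\[
kxyz=x^2+(y+z)^2\le 2(y+z)^2\le 8y^2,
\]
so $kxz\le 8y$. From $y^2\le x^2+z^2$ we get $y\le x+z$, hence $kxz\le 8(x+z)$, which is equivalent to
\[
(kx-8)(kz-8)\le 64 .
\]
This forces $k\le 16$. For each such $k$ it leaves finitely many pairs $(x,z)$, and then $y\le x+z$ leaves finitely many triples.

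A direct check of this finite list should show the following.
\begin{enumerate}
\item There is no fundamental solution for $6\le k\le 16$, which proves the ``only if'' direction.
\item For $1\le k\le 5$ the fundamental solutions are exactly the listed representatives, together with possibly a few fundamental solutions that are linked to them by mutations preserving the sum, i.e.\ cases of equality such as $x=y+z$.
\end{enumerate}
The listed triples do satisfy the equation; for instance $(5,5,5)$ gives $25+100=125$ with $k=1$, and $(3,2,1)$ gives $9+9=54/3$ with $k=3$. This gives existence for $k\le5$.

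\textbf{Step 3 (orbits are distinct).} This is the step I expect to be the main obstacle: showing that the listed representatives for $k=1$ and for $k=3$ lie in different $\widehat{\Gamma}$-orbits. Descent only shows that every orbit meets the fundamental set; it does not show the orbits are distinct.

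My first approach is to make the descent canonical. I would show that a non-fundamental solution has a \emph{unique} sum-decreasing mutation, as in the Markov tree argument: if two quotient inequalities failed simultaneously, the equation would be contradicted. I would then show that sum-preserving mutations only connect fundamental solutions within one small explicit cluster. Together these give a well-defined map from solutions to fundamental clusters that is constant on orbits, and distinct clusters give distinct orbits.

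As a backup, I would look for an orbit invariant directly, such as $\gcd(x,y,z)$ or residues modulo $3$. For example, $(9,3,6)$ and $(9,6,3)$ are divisible by $3$, whereas $(8,4,4)$ and $(5,5,5)$ are not. I would check how each mutation transforms such data. For $k=3$ the two representatives differ only by $y\leftrightarrow z$, which is not an element of $\widehat{\Gamma}$. So in that case I would track which of the two fundamental configurations the canonical descent reaches.
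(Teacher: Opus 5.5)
Your architecture (descent on $x+y+z$, then classifying the fundamental solutions, then a canonical reduction to separate orbits) is a legitimate alternative to the paper's descent on the largest coordinate. However, Step 2 fails as written.

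\textbf{The gap in Step 2.} The inequality $(kx-8)(kz-8)\le 64$ does not leave finitely many pairs $(x,z)$ when $k\le 8$. If $kx\le 8$ or $kz\le 8$, the left side is $\le 0$ whatever the other variable is; for $k=1$, $x=5$ it holds for every $z$. So in exactly the cases that matter ($1\le k\le 5$, and also $k=6,7,8$ for non-existence) your ``finite list'' is infinite. Claims (1) and (2) of Step 2 are therefore asserted rather than proved. The enumeration, which is the real content of the classification, is also only promised.

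To repair the bound you must use the equation in the two strips.
\begin{itemize}
\item If $kz\le 8$: you have $|x-y|\le z$. The case $kz\le 2$ is impossible, since $kxyz\le 2xy<x^2+(y+z)^2$. The case $kz\ge 3$ gives $3xy\le (x-y)^2+2xy+2yz+z^2\le 2xy+2yz+2z^2$, hence $x\le 4z$.
\item If $kx\le 8$: write $y=z+d$ with $0\le d\le x$. Then $(kx-4)\,z(z+d)=x^2+d^2\le 2x^2$, and $kx\ge 5$ because $kxyz>(y+z)^2\ge 4yz$. This bounds $z$.
\end{itemize}
Even then, a sizeable search over $k\le 16$ remains.

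The paper avoids all of this. With $y\ge z$, it evaluates the Vieta quadratic of the largest coordinate at the second largest one. For example, when $A\ge B\ge C$ it uses $G_k(B)=2B^2+2BC+C^2-kB^2C\le (5-k)B^2$, and likewise $H_k(A)$ or $H_k(C)$ in the other orderings. For $k\ge 6$ this is always negative, so the largest coordinate always exceeds its Vieta partner and no fundamental solution exists. For $k\le 5$, nonnegativity forces the smallest coordinate to be at most $5/k$, leaving a short case check.

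\textbf{Step 3 is correct.} It is simpler than you expect, and it supplies something the paper's proof does not. The paper only shows that every solution lies in one of the listed orbits; it never argues that these orbits are pairwise distinct. The divisibility remark in the scaling proposition separates $(9,6,3)$, $(8,4,4)$, $(5,5,5)$ by $\gcd$, but not $(9,6,3)$ from $(9,3,6)$, nor $(3,2,1)$ from $(3,1,2)$.

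Here is why your argument goes through:
\begin{itemize}
\item A mutation changes only one coordinate, so a sum-preserving mutation fixes the triple. Your ``clusters'' are therefore singletons.
\item The decreasing conditions $x>y+z$, $y^2>x^2+z^2$, $z^2>x^2+y^2$ are pairwise incompatible. The first with either other gives $y>x>y+z$ or $z>x>y+z$, and the last two add to $0>2x^2$. So each non-fundamental solution has a unique decreasing direction.
\item The resulting reduction map is constant on $\widehat{\Gamma}$-orbits and fixes fundamental solutions. Hence distinct fundamental solutions lie in distinct orbits.
\end{itemize}
For $k=3$ there is also a direct invariant: every solution satisfies $x\equiv 0$ and $y\equiv -z \pmod 3$, and then the mutations preserve $y$ and $z$ modulo $3$.

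Once Step 2 is repaired and the finite check is actually carried out, your route proves the full statement, including the ``exactly'' assertions.
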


As shown in \Cref{prop: scaling correspondence}, the four orbits of solutions for $k=1$ are related by precise scaling correspondences to the orbits of solutions for $k=2,3,4,5$. This result may be viewed as an analogue of Hurwitz's result for the classical Markov equation \cite{Hur07}; see also \cite[Proposition 2.2]{Aig13}.

There is also a close connection to the classical cluster algebra with the exchange matrix given by \eqref{4-matrix}.
The map $S(x,y,z)=(x^2,y,z)$ connects the classical cluster mutations with the generalized ones. Their compatibility (\Cref{contract}) allows us to transfer the orbit classification to a second family of Diophantine equations with a classical cluster algebraic structure (\Cref{tab: classical}).

\begin{theorem}[= \Cref{thm: classical markov type}]
The equation
\begin{align*}
x^4+y^2+z^2+2yz=kx^2yz
\end{align*}
has positive integer solutions if and only if $k=1,2,5$. For $k=1$, there are exactly two orbits of solutions: $\Gamma[(3,6,3)]$ and $\Gamma[(3,3,6)]$. For $k=2$ and $k=5$, there is only one orbit of solutions: $\Gamma[(2,2,2)]$ and $\Gamma[(1,1,1)]$, respectively.
\end{theorem}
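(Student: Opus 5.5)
Write the equation as $x^4+y^2+z^2+2yz=kx^2yz$. The plan is to reduce it to the generalized equation $X^2+Y^2+Z^2+2YZ=kXYZ$ through the map $S(x,y,z)=(x^2,y,z)$, and then import the orbit classification of that equation (\Cref{thm: main}, \Cref{thm: 12345}).

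\textbf{Reduction via $S$.} If $(x,y,z)$ is a positive integer solution of the quartic equation, then $S(x,y,z)=(x^2,y,z)$ is a positive integer solution of the generalized equation with the same $k$. Hence $1\le k\le 5$ is necessary. Conversely, a solution of the generalized equation lifts to the quartic equation exactly when its first coordinate is a perfect square.

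By the compatibility result \Cref{contract}, the classical mutations $\mu_2,\mu_3$ (Vieta jumps in $y$ and $z$) correspond under $S$ to the generalized mutations in directions $2,3$. The classical mutation $\mu_1$, namely $x\mapsto (y+z)^2/x$ read off from the quartic as a quadratic in $x^2$, or more precisely the appropriate Vieta jump, corresponds to the generalized mutation in direction $1$ up to the square structure. Thus $S$ intertwines $\Gamma$ with $\widehat\Gamma$, at least on the level of the subgroup generated by the relevant moves.

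\textbf{Casework on $k$.} For each $k\in\{1,\dots,5\}$, the task is to decide which $\widehat\Gamma$-orbits contain points whose first coordinate is a square, and then identify the images of $\Gamma$-orbits. I would use the descent from the proof of \Cref{thm: main}: every solution descends by mutations decreasing the maximal component to a minimal (fundamental) solution. The key point to check is whether direction-$1$ mutation preserves squareness of $X$.
\begin{itemize}
\item Under generalized mutation, $X\mapsto (Y+Z)^2/X$. So if $X=x^2$, then $X'=((y+z)/x)^2$ is again a square, and $(y+z)/x$ is an integer because it is a rational number whose square is an integer.
\item Mutations in directions $2$ and $3$ do not change $X$.
\end{itemize}
Hence squareness of $X$ is an invariant of $\widehat\Gamma$-orbits, and each orbit either lifts entirely or not at all. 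It therefore suffices to inspect the orbit representatives: $(9,3,6),(9,6,3),(8,4,4),(5,5,5)$ for $k=1$; $(4,2,2)$ for $k=2$; $(3,2,1),(3,1,2)$ for $k=3$; $(2,1,1)$ for $k=4$; $(1,1,1)$ for $k=5$.

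The square-$X$ representatives are $(9,3,6),(9,6,3)$ (giving $(3,3,6),(3,6,3)$), $(4,2,2)$ (giving $(2,2,2)$), and $(1,1,1)$. This yields exactly $k=1,2,5$, with orbit counts $2,1,1$.

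\textbf{Main obstacle.} The delicate step is checking that $S$ induces a bijection between $\Gamma$-orbits and the square-$X$ $\widehat\Gamma$-orbits, rather than merely a map. Two issues arise:
\begin{itemize}
\item Sign choice: $x$ is positive, so $x=\sqrt X$ is determined uniquely.
\item The lift must commute with all mutations, including the direction-$1$ lift $x\mapsto (y+z)/x$.
\end{itemize}
I would verify this directly from the explicit mutation formulas in \Cref{contract}. With it, distinct $\widehat\Gamma$-orbits pull back to distinct $\Gamma$-orbits, so $(3,6,3)$ and $(3,3,6)$ are not in the same $\Gamma$-orbit.
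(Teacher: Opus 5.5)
Your proposal is correct and follows essentially the same route as the paper: pull solutions back through $S$ via \Cref{contract}, and keep exactly those $\widehat{\Gamma}$-orbits from \Cref{thm: main} whose representative has a square first coordinate. Your observation that squareness of the first coordinate is a $\widehat{\Gamma}$-orbit invariant is the content of \Cref{prop: no square k34} and of the paper's remark that only $(9,6,3)$, $(9,3,6)$, $(4,2,2)$ and $(1,1,1)$ lie in the image of $S$. One slip to fix: the classical mutation is $\mu_1(x,y,z)=\bigl(\frac{y+z}{x},y,z\bigr)$, not $x\mapsto (y+z)^2/x$, and \Cref{contract} gives $\widehat{\mu}_i\circ S=S\circ\mu_i$ exactly for all $i=1,2,3$, so no hedging ``up to the square structure'' or ``on the level of a subgroup'' is needed.
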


Our third objective is to classify the generalized Markov Laurent
mutation invariants associated with the mutation-preserving generalized
cluster algebra of type (I.4) in
\Cref{tab: rank-three-classification}. Mutation invariants encode
algebraic constraints shared by every seed in a mutation orbit, so their
classification provides a global description of the corresponding
generalized cluster mutation group action. Related invariant structures
arising from quiver mutation have also been studied through cyclically
ordered quivers and long mutation cycles \cite{FN26,FN25}.

For $k_1,k_2,k_3\in\mbN$, the basic generalized Markov Laurent mutation
invariant of type (I.4) is
\begin{align*}
\widehat\mcT
=\frac{x^2+y^2+z^2+k_1yz+k_2xz+k_3xy}{xyz}.
\end{align*}
This invariant arises from the family of generalized Markov equations
studied by Gyoda and Matsushita in \cite{GM23}.
By combining techniques from field extension theory, commutative
algebra, and generalized cluster mutation theory, we show that every
generalized Markov Laurent mutation invariant is a polynomial in
$\widehat\mcT$. As an application, we  prove a
conjecture of Chen-Li \cite[Conjecture~6.9]{CL25} concerning the
Laurent mutation invariants of rank $3$ cluster algebras with
irreducible sign-equivalent exchange matrices in \eqref{thm: sign-equivalent}; see also \Cref{conj: rank 3}.

\begin{theorem}[= \Cref{thm:I4-generalized-invariant-ring} and
\Cref{thm:classical-from-I4}]
Let $\mcF=\mathbb Q(x,y,z)$,
$\mathcal L=\mathbb Q[x^{\pm1},y^{\pm1},z^{\pm1}]$, and
$\widehat\Gamma=\langle\widehat\mu_1,\widehat\mu_2,
\widehat\mu_3\rangle$ be the generalized cluster mutation group of
type (I.4) in
\Cref{tab: rank-three-classification}. Then,
\begin{align*}
\mcF^{\widehat\Gamma}=\mathbb Q(\widehat\mcT),\
\mcF^{\widehat\Gamma}\cap\mathcal L
=\mathbb Q[\widehat\mcT],
\end{align*} where $\mcF^{\widehat\Gamma}=\{f\in\mcF\mid f(\widehat\mu_i(x,y,z))=f,\ i=1,2,3\}$.
In particular, \Cref{conj: rank 3} holds.
\end{theorem}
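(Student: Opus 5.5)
The plan is to prove the field statement $\mcF^{\widehat\Gamma}=\mathbb Q(\widehat\mcT)$ by a geometric orbit-density argument on the level surfaces of $\widehat\mcT$. The Laurent statement then follows from a divisibility argument in the UFD $\mathcal L$. The inclusions $\mathbb Q(\widehat\mcT)\subseteq\mcF^{\widehat\Gamma}$ and $\mathbb Q[\widehat\mcT]\subseteq \mcF^{\widehat\Gamma}\cap\mathcal L$ are immediate, since $\widehat\mcT$ is a Laurent polynomial invariant under each $\widehat\mu_i$. Here $\widehat\mu_1$ is the Vieta involution $x\mapsto (y^2+z^2+k_1yz)/x$, and similarly for $\widehat\mu_2,\widehat\mu_3$. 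We use throughout that $N:=x^2+y^2+z^2+k_1yz+k_2xz+k_3xy$ minus $c\,xyz$ is an irreducible polynomial for every constant $c$. This holds because it has degree $2$ in $x$ and its discriminant in $x$ is not a square, which is checked directly.

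\textbf{Field statement.} Let $f\in\mcF^{\widehat\Gamma}$ and set $K=\overline{\mathbb Q(t)}$ with $t$ transcendental. Consider the generic fibre
\begin{align*}
S_t=\{N-t\,xyz=0\}\subset \mathbb A^3_K,
\end{align*}
which is a geometrically irreducible surface. The group $\widehat\Gamma$ acts on $S_t$ by birational maps, and $f|_{S_t}$ is a $\widehat\Gamma$-invariant rational function on $S_t$.

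1. For a fixed generic value $z=z_0$, the slice $C_{z_0}=S_t\cap\{z=z_0\}$ is a conic in $(x,y)$. This conic is preserved by $\widehat\mu_1$ and $\widehat\mu_2$.
2. The composition $\widehat\mu_1\widehat\mu_2$ acts on $C_{z_0}$ as an automorphism of infinite order: on the normalization $\mathbb P^1$ it is a hyperbolic Möbius transformation for generic $t,z_0$, visible from its trace. Hence a generic point has an infinite orbit in $C_{z_0}$, which is Zariski dense there. So $f$ is constant on each $C_{z_0}$.
3. The same argument with $\widehat\mu_2,\widehat\mu_3$ shows that $f$ is constant on each slice $\{x=x_0\}$.
4. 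Generic slices of the two families meet, so $f$ is constant on $S_t$. Therefore $f$ is algebraic over $\mathbb Q(\widehat\mcT)$, and then $f\in\mathbb Q(\widehat\mcT)$.

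The last implication uses that $\mathbb Q(\widehat\mcT)$ is algebraically closed in $\mcF$. This is equivalent to the geometric irreducibility of the generic fibre $S_t$, which is the irreducibility of $N-t\,xyz$ over $\overline{\mathbb Q(t)}$ noted above.

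\textbf{Laurent statement.} Let $f\in\mcF^{\widehat\Gamma}\cap\mathcal L$. By the field statement, $f=P(\widehat\mcT)/Q(\widehat\mcT)$ with $P,Q\in\mathbb Q[T]$ coprime.

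1. Suppose $Q$ has a root $c\in\overline{\mathbb Q}$. Then $\widehat\mcT-c=(N-c\,xyz)/(xyz)$, and $N-c\,xyz$ is an irreducible non-monomial element of the UFD $\overline{\mathbb Q}[x^{\pm1},y^{\pm1},z^{\pm1}]$.
2. Clearing denominators in $f\,Q(\widehat\mcT)=P(\widehat\mcT)$ gives $f\cdot\prod_i(N-c_ixyz)=(xyz)^{d}\cdot(\text{numerator of }P(\widehat\mcT))$, where $d=\deg Q-\deg P$ when this is nonnegative. If $d<0$, we multiply the other side by a monomial instead.
3. The factor $N-c\,xyz$ must then divide $P^{\hom}(N,xyz)=(xyz)^{\deg P}P(\widehat\mcT)$. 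Reducing modulo $N-c\,xyz$ turns this into $(xyz)^{\deg P}P(c)$. This is nonzero modulo $N-c\,xyz$, because $P(c)\neq0$ and $xyz$ is a unit in $\mathcal L$. This is a contradiction.

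Hence $Q$ is constant and $f\in\mathbb Q[\widehat\mcT]$. \Cref{conj: rank 3} then follows by transporting invariants through the relation between the classical sign-equivalent cluster algebras and type (I.4) recorded earlier in the paper.

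\textbf{Main obstacle.} The main difficulty is step 2 of the field statement: showing that $\widehat\mu_1\widehat\mu_2$ has infinite order on the conic $C_{z_0}$ uniformly in $k_1,k_2,k_3$. I would first try to write the conic's parametrization explicitly and compute the trace of the induced $2\times2$ matrix as a nonconstant function of $t$. Such a function cannot be identically equal to a root-of-unity value $2\cos(2\pi r)$.
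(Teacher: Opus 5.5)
There is a genuine gap in your last step, the deduction of \Cref{conj: rank 3}. The substitutions $S_2(x,y,z)=(x,y^2,z^2)$ and $S_3(x,y,z)=(x^2,y,z)$ intertwine $\Gamma_{B_2},\Gamma_{B_3}$ with the type (I.4) groups for $(k_1,k_2,k_3)=(0,2,2)$ and $(2,0,0)$. However, transporting through them only computes the invariants that lie in the proper subfields $E_2=\mathbb Q(x,y^2,z^2)$ and $E_3=\mathbb Q(x^2,y,z)$. A $\Gamma_{B_i}$-invariant of $\mcF=\mathbb Q(x,y,z)$ has no a priori reason to lie in $E_i$. Concretely, $x\mapsto -x$ commutes with $\Gamma_{B_3}$, so an invariant splits into two pieces. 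The even part is handled by transport. The odd part is an invariant $g$ with $g^2\in\mathbb Q(\mcT_3)$, and nothing in your argument excludes it; the same issue arises for $B_2$ with $y\mapsto -y$ and $z\mapsto -z$. The paper closes this with two extra ingredients. First, every $\Gamma_{B_i}$-invariant is algebraic over $E_i^{\Gamma_{B_i}}=\mathbb Q(\mcT_i)$ (\Cref{lem:finite-equivariant-descent}). Second, $\mcF/\mathbb Q(\mcT_i,z)$ is regular (\Cref{prop:classical-regularity-after-substitution}, a new discriminant computation for the level sets of $\mcT_2,\mcT_3$), which forces those algebraic invariants into $\mathbb Q(\mcT_i)$. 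You need these, or else you must rerun your orbit argument directly for $\Gamma_{B_i}$ on $\{\mcT_i=t\}$. That in turn requires checking geometric irreducibility of these surfaces and infinite order on the relevant slices.

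Your type (I.4) argument is correct and takes a different route from the paper. The paper proves that $\mcF/\mathbb Q(\widehat\mcT,z)$ is a regular function field of one variable. It gets infinite order of $\widehat\mu_1\widehat\mu_2$ from degree growth along $(s,1,1)$ and concludes $\mcF^{\langle\widehat\mu_1\widehat\mu_2\rangle}=\mathbb Q(\widehat\mcT,z)$ by \Cref{lem:fixed-field-one-variable}. It then removes $z$ using only $\widehat\mu_3$ and the transcendence of $z'=\widehat\mcT xy-z-k_1y-k_2x$ over $\mathbb Q(\widehat\mcT,z)$. You use two transverse families of slices and instead need $\mathbb Q(\widehat\mcT)$ to be algebraically closed in $\mcF$.

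Your ``main obstacle'' is in fact easy. On $z=z_0$, the map $\widehat\mu_1\widehat\mu_2$ is affine with linear part $\begin{pmatrix}a^2-1&a\\-a&-1\end{pmatrix}$, where $a=k_3-tz_0$. This has determinant $1$ and trace $a^2-2$, which is transcendental over $\mathbb Q$ for $z_0\in\mathbb Q^\times$. Hence no power of it is the identity on the irreducible conic $C_{z_0}$. You should read ``generic $z_0$'' as such explicit choices, because the exceptional $z_0\in\overline{\mathbb Q(t)}$ with $a^2-2=\zeta+\zeta^{-1}$ are infinite in number. Also note that irreducibility of $C_{z_0}$ is exactly the discriminant computation of \Cref{prop:I4-regular-extension}.

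Your Laurent argument is equivalent to the paper's; both amount to $\{\widehat\mcT=c\}$ meeting the torus. One correction: $N-c\,xyz$ is not irreducible for every $c$. At $c=0$ one has $N=(x+y+z)^2$ for $(k_1,k_2,k_3)=(2,2,2)$, and $N=(x+\sqrt{-1}(y+z))(x-\sqrt{-1}(y+z))$ for $(2,0,0)$. This is harmless, since you only use irreducibility at $c=t$ and non-monomiality at the roots of $Q$.
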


These results lead to further natural problems. We consider broader
families of generalized Markov-type equations, seeking criteria for the
existence of positive integer solutions and a description of their
mutation orbit decompositions (\Cref{question: open}). We also formulate
ordered uniqueness conjectures for positive integer solutions and record a
counterexample showing the necessity of certain hypotheses
(\Cref{conj: uniqueness-CH}).

The paper is organized as follows. In \Cref{S2}, we review the necessary background on generalized cluster algebras. Furthermore, we introduce generalized mutation invariants and mutation-preserving generalized cluster algebras (\Cref{mutation invariant} and \Cref{def: mutation-preserving}). In \Cref{sec: classification}, we classify all irreducible mutation-preserving generalized cluster algebras (\Cref{thm: classification}). In \Cref{sec: diophantine}, we study the Markov-type equation \eqref{eq: main introduction}, establish the criterion for the existence of positive integer solutions, and classify their mutation orbits (\Cref{prop: k larger than 5}, \Cref{thm: main} and \Cref{thm: 12345}). We also obtain the corresponding result for a Diophantine equation with a cluster algebraic structure (\Cref{thm: classical markov type}). In \Cref{sec:laurent-classification}, we classify generalized Markov Laurent mutation invariants (\Cref{thm:I4-generalized-invariant-ring}). In \Cref{sec:proof-chen-li-conjecture}, we prove a conjecture proposed by Chen-Li (\Cref{thm:classical-from-I4}). In \Cref{sec: further-questions}, we present an open question, a uniqueness conjecture, and a counterexample. In \Cref{sec:orbit-appendix}, we illustrate the mutation orbits appearing in \Cref{thm: main}.
\vspace{-1.0\baselineskip}
\begingroup
\makeatletter
\let\@tocwrite\@gobbletwo
\section*{Acknowledgements}
\makeatother
\endgroup
The authors are grateful to Tomoki Nakanishi, Zhe Sun and Yu Ye for their valuable suggestions and comments. Z. Chen also sincerely thanks  Nagoya University for its 2-year hospitality. This work is supported by National Natural Science Foundation of China (Grant No. 124B2003) and China Scholarship Council (Grant No. 202406340022). 
\vspace{3mm}
\vspace{0.1\baselineskip}
\begingroup
\makeatletter
\let\@tocwrite\@gobbletwo
\section*{Conventions}
\makeatother
\endgroup

Throughout this paper, we use the following notations and conventions.
\begin{itemize}[leftmargin=2em]
	\item The sets of integers, non-negative integers, positive integers, rational numbers, and real numbers are denoted by $\mbZ$, $\mbN$, $\mbN_{+}$, $\mbQ$, and $\mbR$, respectively. We write $\mbQ_{+}$ for the set of positive rational numbers.
	
	\item Let $\operatorname{Mat}_{m\times n}(\mbR)$ be the set of all $m\times n$ matrices with entries in $\mbR$. We denote by $I_n$ and $O$ the $n\times n$ identity matrix and a zero matrix of an appropriate size, respectively. The diagonal matrix with diagonal entries $a_1,\dots,a_n$ is denoted by $\diag(a_1,\dots,a_n)$.
	
	\item For $a\in\mbR$, we set
	\[
	[a]_{+}=\max\{a,0\},\quad
	\sgn(a)=
	\begin{cases}
	1,&a>0,\\
	0,&a=0,\\
	-1,&a<0.
	\end{cases}
	\]
	
	\item The symmetric group on $\{1,\dots,n\}$ is denoted by $\mathfrak{S}_n$. 
	
	\item If a group $G$ acts on a set and $p$ is an element of that set, then $G[p]$ denotes the $G$-orbit of $p$. In particular, $\Gamma[p]$ and $\widehat{\Gamma}[p]$ denote the orbits of $p$ under the classical and generalized cluster mutation groups, respectively.
\end{itemize}

\section{Preliminaries}\label{S2}
In this section, we review some basic notions about generalized cluster algebras, permutation and irreducibility of matrices. Then, we introduce generalized mutation invariants and mutation-preserving generalized cluster algebras.

\subsection{Generalized cluster algebras}\

In this subsection, we first recall some definitions and properties about the \emph{generalized cluster algebras} (GCA, for short), based on \cite{Nak15, CS14}.
%\begin{definition}[\emph{Mutation data}]
%\end{definition}
\begin{definition}[\emph{Generalized seed}]\label{Def of GS} Let $\mcF$ be the field of rational functions in $n$ independent variables with coefficients in $\mbQ$. A (labeled) \emph{generalized seed} of rank $n$ is a triple $(\mfx,B,\mcZ)$, where
\begin{itemize}[leftmargin=2em]
\item $\mathbf{x}=(x_1, \dots, x_n)$ is an $n$-tuple of algebraically independent generators of $\mcF$.
\item $B=(b_{ij})_{n\times n}$ is a skew-symmetrizable matrix. 
\item $\mcZ=(Z_1,\dots,Z_n)$ is an $n$-tuple of polynomials over $\mbN$, where 
\begin{align*}Z_i(u)=z_{i,0}+z_{i,1}u+\cdots+z_{i,r_i}u^{r_i},\end{align*}
such that $z_{i,0}=z_{i,r_i}=1$, which is called the \emph{monic condition}.
\end{itemize} Here, we respectively call $\mfx$ a \emph{generalized cluster}, $x_i$ a \emph{generalized cluster variable}, $B$ a \emph{generalized exchange matrix}, $Z_i$ an \emph{exchange polynomial} and $r_i$ an \emph{exchange degree}. Moreover, let $R=\diag(r_1,\dots, r_n)$. Then, it is a diagonal matrix with positive integer entries, which is called an \emph{exchange degree matrix}.  
\end{definition}
Note that $BR$ is still a skew-symmetrizable matrix with the skew-symmetrizer $RD$.
\begin{definition}[\emph{Generalized mutation}]
Let $(\mfx,B,\mcZ)$ be a generalized seed and $k\in \{1,\dots,n\}$. We define another generalized seed $\mu_{k;\mathbf{GCA}}(\mfx,B,\mcZ)=(\mfx^{\prime},B^{\prime},\mcZ^{\prime})$, such that 
	\begin{itemize}[leftmargin=2em]
	\item The generalized cluster variables $(x_1^{\prime},\dots,x_n^{\prime})$ are given by 
	\begin{align}\ 
		x_{i}^{\prime}=\left\{
		\begin{array}{ll}
			x_{k}^{-1}\left(\mathop{\prod}\limits_{j=1}^{n} x_j^{[-b_{jk}]_+}\right)^{r_k}Z_k\left(\mathop{\prod}\limits_{j=1}^{n} x_j^{b_{jk}}\right), &   \text{if}\ i=k, \\
			x_{i}, &  \text{if}\ i \neq k, 
		\end{array} \right. \label{eq: generalized cluster variables}
	\end{align}

	\item The entries of $B^\prime=(b^\prime_{ij})_{n\times n}$ are given by \begin{align} \label{generalized matrix mutation}
		b_{ij}^{\prime}=\left\{
		\begin{array}{ll}
			-b_{ij}, &  \text{if}\ i=k \;\;\mbox{or}\;\; j=k, \\
			b_{ij}+r_k([b_{ik}]_{+}b_{kj}+b_{ik}[-b_{kj}]_{+}), &  \text{if}\ i\neq k \;\;
			\mbox{and}\; j\neq k. 
		\end{array} \right. 
	\end{align}

	\item The exchange polynomials $\mcZ^{\prime}=(Z^{\prime}_1, \dots, Z^{\prime}_n)$ are given by $Z_i^{\prime}=Z_i$ for $i\neq k$ and for $Z_k^{\prime}$, its coefficients satisfy $z^{\prime}_{k,s}=z_{k,r_k-s}$.
	\end{itemize}
\end{definition} Here, $\mu_{k;\mathbf{GCA}}$ is called a \emph{generalized cluster mutation in the direction $k$}.
Note that the mutation of exchange polynomials $\mcZ^{\prime}=(Z^{\prime}_1, \dots, Z^{\prime}_n)$ can also be given by
		\begin{align*}
		Z_{i}^{\prime}(u)=\left\{
		\begin{array}{ll}
			u^{r_k}Z_k(u^{-1}), &  \text{if}\ i=k , \\
			Z_i(u), &  \text{if}\ i\neq k . 
		\end{array} \right. 
	\end{align*} Sometimes, we might assume that $z_{i,s}=z_{i,r_i-s}$ for any $i$, which is called the \emph{reciprocity condition} \cite{CS14, Nak15}. In particular, when $r_i=1$ or $r_i=2$, such condition automatically holds for arbitrary coefficients. 
\begin{remark}\label{no coe GCA}	
Here, for our purposes, we only consider the generalized seeds and generalized mutations without coefficients in some semifield. We can refer to \cite{CS14, Nak15, NR16} for the general case with coefficients. 
\end{remark}
It can be checked directly that $(\mfx^{\prime},B^{\prime},\mcZ^{\prime})$ is still a generalized seed and $\mu_{k;\mathbf{GCA}}$ is involutive, that is $\mu_{k;\mathbf{GCA}}(\mfx^{\prime},B^{\prime},\mcZ^{\prime})=(\mfx,B,\mcZ)$, see \cite{Nak15, NR16, Nak23}. Then, similar to the classical cluster pattern $\mathbf{\Sigma}$, we can define the \emph{generalized cluster pattern} $\mathbf{\Sigma}_{\mathbf{GCA}}=\{(\mfx_t,B_t,\mcZ_t)|\ t\in \mbT_n\}$ to be a collection of generalized seeds which are labeled by the vertices of the $n$-regular tree $\mbT_{n}$ and connected by a single generalized mutation. We denote them by $\mathbf{x}_{t}=(x_{1;t},\dots,x_{n;t})$, $B_{t}=(b_{ij;t})_{n\times n}$ and $\mcZ_t=(Z_{1;t},\dots,Z_{n;t})$. We can fix an arbitrary initial vertex $t_0\in \mbT_n$. 

Two seeds are said to be \emph{mutation-equivalent} if they can be obtained from each other by a finite sequence of generalized mutations. The corresponding equivalence class is called the \emph{mutation equivalence class}.
\begin{definition}[\emph{Generalized cluster algebra}]\label{def of GCA} For a generalized cluster pattern $\mathbf{\Sigma}_{\mathbf{GCA}}$, the  \emph{generalized cluster algebra} $\mcA(\mathbf{\Sigma_{\mathbf{GCA}}})=\mcA(B;R;\mcZ)$ is the $\mbQ$-subalgebra of $\mcF$ generated by all the generalized cluster variables $\{x_{i;t}|\ i=1,\dots,n;t\in\mbT_{n}\}$. 
\end{definition}
\begin{example}[\emph{Type $B_2$}]
	Let the initial cluster be $\mfx=(x_1,x_2)$ and the initial triple $(B,\mcZ,R)$ be as follows:
\begin{align*}
	B=\begin{pmatrix}0 & -1 \\ 1 & 0 \end{pmatrix}, \left\{
		\begin{array}{ll}
			Z_1(u)=1+u+u^2 \\
			Z_2(u)=1+u 
		\end{array}, \right. R=\begin{pmatrix}2 & 0 \\ 0 & 1\end{pmatrix}.
\end{align*} Then, all the exchange matrices are the same up to a sign and all the exchange polynomials are invariant under the mutations. By a direct calculation, all the $6$ distinct generalized clusters are as follows:
\begin{align*}
\begin{array}{ll}
	\left\{\begin{array}{ll}
			x_{1;0}=x_1 \\
			x_{2;0}=x_2 
		\end{array}, \right. \left\{\begin{array}{ll}
			x_{1;1}=\frac{1+x_2+x_2^2}{x_1} \\
			x_{2;1}=x_2 
		\end{array}, \right. \left\{\begin{array}{ll}
			x_{1;2}=\frac{1+x_2+x_2^2}{x_1} \\
			x_{2;2}=\frac{1+x_2+x_2^2+x_1}{x_1x_2} 
		\end{array}, \right. \\ 
		\left\{\begin{array}{ll}
			x_{1;3}=\frac{1+2x_1+x_1^2+x_1x_2+x_2+x_2^2}{x_1x_2^2} \\
			x_{2;3}=\frac{1+x_2+x_2^2+x_1}{x_1x_2} 
		\end{array}, \right. \left\{\begin{array}{ll}
			x_{1;4}=\frac{1+2x_1+x_1^2+x_1x_2+x_2+x_2^2}{x_1x_2^2} \\
			x_{2;4}=\frac{1+x_1}{x_2} 
		\end{array}, \right. \left\{\begin{array}{ll}
			x_{1;5}=x_1 \\
			x_{2;5}=\frac{1+x_1}{x_2} 
		\end{array}. \right.
\end{array} 
\end{align*} Then, $\mcA(B;R;\mcZ)=\mbQ[x_{i;j}\ |\ i=1,2;\ j=0,1,2,3,4,5]\subseteq \mbQ(x_1,x_2)$.
\end{example} %We can refer to \cite{Nak23} for more examples of rank $2$ generalized cluster algebras, including other Dynkin types.
\begin{remark}
	In particular, when $R=I_n$, the generalized cluster pattern reduces to the classical cluster pattern defined in \cite{FZ07}, where we denote the mutation by $\mu_k$.
\end{remark}
\begin{lemma}\label{CA GCA mutation} Let $(\mfx,B,\mcZ)$ be a generalized seed and $k\in \{1,\dots,n\}$. Then, the equality holds 
	\begin{align} \label{eq: recover}
		\mu_{k;\mathbf{GCA}}(B)R=\mu_k(BR).
		\end{align}
\end{lemma}
%\begin{proof}
%	It is direct by the classical mutation rules \eqref{matrix mutation} and the generalized mutation rules \eqref{generalized matrix mutation}.
%\end{proof}
For brevity, we sometimes denote the generalized cluster algebra $\mcA(\mathbf{\Sigma_{\mathbf{GCA}}})$ by $\mcA_{\mathbf{GCA}}$ or $\mcA_{\mathbf{GCA}}(B)$, and its mutation $\mu_{k;\mathbf{GCA}}$ by $\widehat{\mu}_k$ without ambiguity.

\begin{definition}[\emph{Sign-equivalent matrix}]\label{def sign}
	An exchange matrix $B$ in a generalized seed is said to be  \emph{sign-equivalent} if its mutation equivalence class is $[B]=\{B,-B\}$. 
\end{definition} 
Note that all the rank $2$ exchange matrices and the exchange matrix associated with the Markov quiver are sign-equivalent.
\subsection{Permutation matrices and irreducible matrices}\

We recall some notions and properties about permutation matrices and irreducible matrices according to \cite[Chapter XIII]{Gan98}. 
\begin{definition}[\emph{Permutation matrix}]
	A permutation matrix $P$ is a square matrix that has exactly one entry of 1 in each row and each column with all other entries 0. In particular, the identity matrix is a special permutation matrix.
\end{definition} 
\begin{definition}[\emph{Reducible matrix}]
Let $B\in \text{Mat}_{n\times n}(\mbR)$.  It is said to be \emph{reducible} if there is a permutation matrix $P$, such that \begin{align*}P^{T} B P=\left(\begin{array}{ll}{B_1} & {B_2} \\ {O} & {B_3}\end{array}\right), \end{align*} where $B_1\in \text{Mat}_{k\times k}(\mbR)$ and $B_3 \in \text{Mat}_{(n-k)\times (n-k)}(\mbR)$ with $1\leq k\leq n-1$. Otherwise, it is said to be \emph{irreducible}. In particular, assume that $B$ is an exchange matrix. Since $B$ is sign-skew-symmetric, it is reducible if and only if $B_2=O$. Then, $P^{T} B P$ is  called the \emph{direct sum} of $B_1$ and $B_3$. 
\end{definition}
\begin{definition}[\emph{Irreducible generalized cluster algebra}]
	A generalized cluster algebra $\mcA_{\mathbf{GCA}}(B)$ is said to be \emph{irreducible} if the generalized exchange matrix $B$ is irreducible. Equivalently, all the generalized exchange matrices in the mutation equivalence class are irreducible.
\end{definition}
Note that if $B$ is the direct sum of $B_1$ and $B_2$, then there is a canonical algebraic isomorphism \begin{align*}\mcA_{\mathbf{GCA}}(B) \cong \mcA_{\mathbf{GCA}}(B_1) \times \mcA_{\mathbf{GCA}}(B_2).\end{align*}
For a permutation $\sigma\in \mathfrak{S}_n$, we define the left action of $\sigma$ on $B$ by $\sigma(B)=B^{\prime}=(b_{ij}^{\prime})$, such that $b^{\prime}_{ij}=b_{\sigma^{-1}(i)\sigma^{-1}(j)}.$ In fact, there is a permutation matrix corresponding to $\sigma$ \begin{align*}
P_\sigma=(p_{ij})_{n\times n},\ \text{where}\ p_{ij}=\delta_{i,\sigma^{-1}(j)},
\end{align*} such that 
$\sigma(B)=P^{T}_{\sigma}BP_{\sigma}.$ We then extend such action to the more general seed as follows.
\begin{definition}[\emph{Permutation action}]
	For a generalized seed $\Sigma=(\mfx,B,\mcZ)$ and a permutation $\sigma \in \mathfrak{S}_n$, we define the (left) action of $\sigma$ on $\Sigma$ by 
	\begin{align*}
		\sigma(\Sigma)=(\sigma(\mfx),\sigma(B),\sigma(\mcZ)),
	\end{align*} where $\sigma(\mfx)=\mfx^{\prime}$ is defined by $x^{\prime}_i=x_{\sigma^{-1}(i)}$ and $\sigma(\mcZ)=\mcZ^{\prime}$ is defined by $Z^{\prime}_i=Z_{\sigma^{-1}(i)}$. At the same time, $\sigma(R)=R^{\prime}=\diag(r^{\prime}_1,\dots, r^{\prime}_n)$ with $r^{\prime}_i=r_{\sigma^{-1}(i)}$. Then, the following proposition can be obtained directly by a simple calculation.
\end{definition}
\begin{proposition}[cf. \cite{Nak23}]
	For any $\sigma \in \mathfrak{S}_n$ and $k\in \{1,\dots,n\}$, the following compatible relation holds:
	\begin{align*}
		\widehat{\mu}_{\sigma(k)}(\sigma \Sigma)=\sigma (\widehat{\mu}_{k}(\Sigma)), 
	\end{align*} where $\Sigma$ is any generalized seed.
\end{proposition}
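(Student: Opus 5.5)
We will verify the identity $\widehat{\mu}_{\sigma(k)}(\sigma \Sigma)=\sigma (\widehat{\mu}_{k}(\Sigma))$ componentwise: first the generalized cluster variables, then the exchange matrix, then the exchange polynomials. Throughout, write $\Sigma=(\mfx,B,\mcZ)$ and $\sigma\Sigma=(\mfx^{\sigma},B^{\sigma},\mcZ^{\sigma})$, where
\begin{align*}
x^{\sigma}_i=x_{\sigma^{-1}(i)},\quad b^{\sigma}_{ij}=b_{\sigma^{-1}(i)\sigma^{-1}(j)},\quad Z^{\sigma}_i=Z_{\sigma^{-1}(i)},\quad r^{\sigma}_i=r_{\sigma^{-1}(i)}.
\end{align*}
Set $k'=\sigma(k)$, so that $\sigma^{-1}(k')=k$. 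The method is simply to substitute these relations into the mutation formulas and reindex the products via $j=\sigma(l)$.

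\textbf{Cluster variables.} For $i\neq k'$ both sides give $x_{\sigma^{-1}(i)}$, since $\sigma^{-1}(i)\neq k$. For $i=k'$, the left side is
\begin{align*}
(x^{\sigma}_{k'})^{-1}\Big(\prod_{j} (x^{\sigma}_j)^{[-b^{\sigma}_{jk'}]_+}\Big)^{r^{\sigma}_{k'}} Z^{\sigma}_{k'}\Big(\prod_j (x^{\sigma}_j)^{b^{\sigma}_{jk'}}\Big).
\end{align*}
Substituting $j=\sigma(l)$ turns this into
\begin{align*}
x_k^{-1}\Big(\prod_l x_l^{[-b_{lk}]_+}\Big)^{r_k}Z_k\Big(\prod_l x_l^{b_{lk}}\Big),
\end{align*}
which is exactly $x'_k$. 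The right side is $(\mfx')_{\sigma^{-1}(k')}=x'_k$, so the two sides agree.

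\textbf{Exchange matrix.} The mutated entry in position $(i,j)$ of $B^{\sigma}$ is determined by the conditions $i=k'$, $j=k'$, which are equivalent to $\sigma^{-1}(i)=k$, $\sigma^{-1}(j)=k$. It also involves $b^{\sigma}_{ik'}=b_{\sigma^{-1}(i)k}$, $b^{\sigma}_{k'j}=b_{k\sigma^{-1}(j)}$ and $r^{\sigma}_{k'}=r_k$. Hence it equals $b'_{\sigma^{-1}(i)\sigma^{-1}(j)}$, which is the $(i,j)$ entry of $\sigma(B')$. The matrix $R$ is unchanged by mutation, so $\sigma(R)$ is consistent on both sides.

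\textbf{Exchange polynomials.} Only the $k'$-th entry of $\mcZ^{\sigma}$ changes, and it becomes $u^{r_k}Z_k(u^{-1})=Z'_k$, which matches $\sigma(\mcZ')_{k'}$. Every other entry is untouched on both sides.

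There is no real obstacle in this argument. The only point requiring care is the index bookkeeping: the action uses $\sigma^{-1}$ on indices while the mutation direction is moved by $\sigma$, so at each step one should check that the conventions $x'_i=x_{\sigma^{-1}(i)}$ and $P_\sigma$ are applied consistently.
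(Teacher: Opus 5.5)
Your componentwise verification is correct, and it is exactly the ``simple calculation'' the paper has in mind; the paper gives no further argument beyond citing Nakanishi. Substituting $x^{\sigma}_i=x_{\sigma^{-1}(i)}$, $b^{\sigma}_{ij}=b_{\sigma^{-1}(i)\sigma^{-1}(j)}$, $r^{\sigma}_i=r_{\sigma^{-1}(i)}$ and $Z^{\sigma}_i=Z_{\sigma^{-1}(i)}$ into the mutation rules and reindexing by $j=\sigma(l)$ handles the clusters, the exchange matrix and the exchange polynomials, as you do.
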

\subsection{Generalized mutation invariants of generalized cluster algebras}\

Given a generalized cluster algebra $\mcA_{\mathbf{GCA}}$ of rank $n$, we introduce the notion of generalized mutation invariants, which extends the definition of mutation invariants in  \cite{CL24}.
\begin{itemize}[leftmargin=2em]
	\item For the classical cluster algebras, 
the Laurent phenomenon was proved by \cite{FZ02} and the positivity theorem was obtained by \cite{GHKK18}. 
\item For the generalized cluster algebras, the Laurent phenomenon was proved by \cite{CS14, Nak15} and the positivity theorem was given by \cite{BLM25}. 
\end{itemize}
Hence, all the generalized clusters labeled by $t\in \mbT_n$ can be written as  
\begin{align*}\mfx_t=(c_{1;t}(x_1,\cdots,x_n),\cdots,c_{n;t}(x_1,\cdots,x_n)),\end{align*} where each $c_{i;t}(x_1,\cdots,x_n)$ is a Laurent polynomial with respect to the initial generalized cluster variables $x_i$ with non-negative integer coefficients.
\begin{definition}[\emph{Generalized mutation invariant}]\label{mutation invariant}
	 A non-constant and reduced rational function $\mcT(x_1,\dots,x_n)$ is called a \emph{generalized mutation invariant of $\mcA_{\mathbf{GCA}}$} if for any $t \in \mbT_{n}$,
	\begin{equation*}
		\mcT(x_{1},\cdots,x_{n})=\mcT(x_{1;t},\cdots,x_{n;t}).
	\end{equation*}
	In particular, if $\mcT(x_1,\dots,x_n) \in \mbQ[x_{1}^{\pm 1},\dots,x_{n}^{\pm 1}]$, it is called a \emph{generalized Laurent mutation invariant}. Moreover, for a classical cluster algebra, we may call it a \emph{Laurent mutation invariant} without ambiguity.
\end{definition} 
\begin{remark}
	For rank $2$ case, certain Laurent mutation invariants are exactly the greedy bases of classical cluster algebras of affine type, see \cite{LLZ14, CL25}.
\end{remark}
\begin{definition}[\emph{Mutation-preserving generalized cluster algebras}]\label{def: mutation-preserving}
  Let  $\mathbf{\Sigma}_{\mathbf{GCA}}$ be a generalized cluster pattern. For any $t \in \mbT_{n}$ and $k \in \{1,\cdots,n\}$, we can define a map $\hat{\mu}_{k;t}$ from $\mbQ^*(x_1,\cdots,x_n)^{\times n}$ to itself as $\hat{\mu}_{k;t}(f_1,\dots,f_n)=(f^{\prime}_1,\dots,f^{\prime}_n)$, where
	\begin{align}  
		f^{\prime}_i=\left\{
		\begin{array}{ll}
			 f_{k}^{-1}\left(\mathop{\prod}\limits_{j=1}^{n} f_j^{[-b_{jk;t}]_+}\right)^{r_k}Z_k\left(\mathop{\prod}\limits_{j=1}^{n} f_j^{b_{jk;t}}\right), &   i=k, \\
			f_i, &   i \neq k, 
		\end{array} \right. \label{eq: mutation-preserving}
	\end{align}
	where $b_{jk;t}$ is the $(j,k)$-component of $B_t$. Note that $\hat{\mu}_{k;t}$ is an involution. If $((f_1,\cdots,f_n),B_t)$ is a generalized seed of  $\mcA_{\mathbf{GCA}}$, then $\hat{\mu}_{k;t}$ can be viewed as the generalized cluster mutation in the $k$-th direction.
	
	A \emph{mutation-preserving generalized cluster algebra} is a generalized cluster algebra $\mcA_{\mathbf{GCA}}$ which satisfies
	$\hat{\mu}_{k;t}=\hat{\mu}_{k;t_0},$ for any  $t \in \mbT_{n}$ and $k \in \{1,\cdots,n\}.$
\end{definition}
%Given a mutation-preserving generalized cluster algebra, we denote $\hat{\mu}_{k;t}$ by $\hat{\mu}_{k}$ for any  $t \in \mbT_{n}$ and $k \in \{1,\cdots,n\}.$ 
Then, we can directly obtain the following lemma.
\begin{lemma}\label{equivalent} Let $\mcA_{\mathbf{GCA}}$ be a mutation-preserving generalized cluster algebra and $\mcT(x_{1},\cdots,x_{n})$ be a non-constant rational function. Then, the following statements are equivalent.
\begin{enumerate}
	\item $\mcT(x_{1},\cdots,x_{n})$ is a generalized mutation invariant of $\mcA_{\mathbf{GCA}}$.
	\item $\mcT(x_{1},\cdots,x_{n})=\mcT(\hat{\mu}_{k;t_0}(x_{1},\cdots,x_{n}))$, for any $k\in \{1,\dots,n\}$.
\end{enumerate}
\end{lemma}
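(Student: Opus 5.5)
The plan is to prove the two implications separately. Both rest on one observation. In a mutation-preserving generalized cluster algebra, every generalized cluster $\mfx_t$ is obtained from the initial cluster $\mfx_{t_0}=(x_1,\dots,x_n)$ by composing the maps $\hat{\mu}_{k;t_0}$ along the path in $\mbT_n$ from $t_0$ to $t$. Indeed, if $t_0=s_0 - s_1 - \cdots - s_m=t$ with edge labels $k_1,\dots,k_m$, then by \eqref{eq: generalized cluster variables} and \eqref{eq: mutation-preserving} we have $\mfx_{s_{j}}=\hat{\mu}_{k_j;s_{j-1}}(\mfx_{s_{j-1}})$. This holds because the mutation formula at the seed $s_{j-1}$ uses exactly $B_{s_{j-1}}$, the exponent $r_{k_j}$, and the exchange polynomial $Z_{k_j;s_{j-1}}$. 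The hypothesis $\hat{\mu}_{k;s}=\hat{\mu}_{k;t_0}$ then gives
\begin{align*}
\mfx_t=\hat{\mu}_{k_m;t_0}\circ\cdots\circ\hat{\mu}_{k_1;t_0}(x_1,\dots,x_n).
\end{align*}
One point needs care here. The definition of $\hat{\mu}_{k;t}$ is written with $Z_k$, and this should be read as the exchange polynomial $Z_{k;t}$ attached to the seed $t$. Mutation-preservation then says precisely that the combined effect of $B_t$ and $Z_{k;t}$ yields the same map. With that reading, the display is a straightforward induction on $m$.

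For (1)$\Rightarrow$(2), take $t$ to be the neighbor of $t_0$ along the edge labeled $k$. Then $\mfx_t=\hat{\mu}_{k;t_0}(x_1,\dots,x_n)$, and invariance gives $\mcT(x_1,\dots,x_n)=\mcT(\hat{\mu}_{k;t_0}(x_1,\dots,x_n))$.

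For (2)$\Rightarrow$(1), induct on $m$. The key step is to upgrade (2) from the initial cluster to an arbitrary tuple. The maps $\hat{\mu}_{k;t_0}$ are given by universal rational formulas in the entries. So (2), as an identity in $\mbQ(x_1,\dots,x_n)$, implies $\mcT(\mathbf f)=\mcT(\hat{\mu}_{k;t_0}(\mathbf f))$ for any tuple $\mathbf f=(f_1,\dots,f_n)$ of algebraically independent generators of $\mcF$. This follows by substituting $x_i\mapsto f_i$, which is a field automorphism of $\mcF$ commuting with the rational formula. In particular it applies to $\mathbf f=\mfx_{s_{j-1}}$, since each generalized cluster is a free generating set of $\mcF$ by the seed axioms. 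Applying this at each step along the path gives $\mcT(\mfx_{s_j})=\mcT(\mfx_{s_{j-1}})$, and hence $\mcT(\mfx_t)=\mcT(x_1,\dots,x_n)$.

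I expect the main subtlety to be this substitution step: checking that all expressions are well defined, with no zero denominators. This is handled by algebraic independence, since the substitution is injective on $\mcF$. The rest of the argument is bookkeeping with the definitions.
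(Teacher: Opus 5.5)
Your proof is correct and follows the approach the paper intends. The paper states the lemma without proof, as an immediate consequence of the definitions; your argument supplies that verification: you write each $\mathbf{x}_t$ as an iterated composition of the seed-independent maps $\hat{\mu}_{k;t_0}$, then transport identity (2) along the path via the substitution homomorphism $x_i\mapsto x_{i;s}$, with your reading of $Z_k$ as $Z_{k;t}$ matching the paper's usage (cf.\ the reciprocity condition in case (II.2)).
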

In fact, if $B$ is sign-equivalent, it may not be difficult to prove that $\mcA_{\mathbf{GCA}}$ is mutation-preserving. Conversely, if $\mcA_{\mathbf{GCA}}$ is mutation-preserving, does the mutation-preserving property imply that $B$ is sign-equivalent? This question is proposed by T. Nakanishi. In the next section, we will give an answer to it.
\begin{definition}[\emph{Generalized cluster mutation group}]\label{def: gmp}
	For any mutation-preserving generalized cluster algebra $\mcA_{\mathbf{GCA}}$, we define the \emph{generalized cluster mutation group} as $\widehat{\Gamma}=\langle \widehat{\mu}_1, \widehat{\mu}_2,\dots \widehat{\mu}_n \rangle$, which is generated by the generalized mutations. 
	
	Here, we omit the label $t\in \mbT$ since each $\widehat{\mu}_i$ is independent of the choice of generalized seeds. In particular, if it degenerates to the classical cluster algebra, the cluster mutation group is denoted by $\Gamma=\langle \mu_1, \mu_2, \dots, \mu_n\rangle$.
\end{definition}
\section{Classification of mutation-preserving generalized cluster algebras}\label{sec: classification}
In this section, we classify all irreducible generalized cluster algebras whose mutation maps are independent of the seed. The proof combines the classification of sign-equivalent exchange matrices with restrictions on exchange degrees and exchange polynomials. 

Beforehand, we first recall the following classification of irreducible sign-equivalent exchange matrices for classical cluster algebras.
\begin{theorem}[{\cite[Theorem 3.5]{CL25}}]\label{thm: sign-equivalent}
	All the irreducible sign-equivalent exchange matrices of order $3$ are as follows:
	\begin{align}
		\sigma\begin{pmatrix}0 & 2 & -2\\ -2 & 0 & 2\\ 2 & -2 & 0\end{pmatrix},\ \sigma\begin{pmatrix}0 & 1 & -1\\ -4 & 0 & 2\\ 4 & -2 & 0\end{pmatrix},\ \sigma\begin{pmatrix}0 & 4 & -4\\ -1 & 0 & 2\\ 1 & -2 & 0\end{pmatrix}, \label{sign-equivalence class}
	\end{align} where $\sigma\in \mathfrak{S}_3$.
\end{theorem}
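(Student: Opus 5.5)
The plan is to turn sign-equivalence into a small system of Diophantine equations in the absolute values of the entries of $B$, and then solve that system. Let $B=(b_{ij})$ be an irreducible skew-symmetrizable $3\times 3$ exchange matrix with $[B]=\{B,-B\}$.

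\textbf{Step 1: reduce to $\mu_k(B)=-B$ for every $k$.} For each $k$ we have $\mu_k(B)\in\{B,-B\}$. Since $\mu_k$ negates row and column $k$, the equality $\mu_k(B)=B$ would force row and column $k$ to vanish, and $B$ would be reducible. Hence $\mu_k(B)=-B$ for every $k$. Reading off the entries off row and column $k$, this means that for each $k$ and each pair $\{i,j\}=\{1,2,3\}\setminus\{k\}$,
\begin{align*}
2b_{ij}=-\bigl([b_{ik}]_+b_{kj}+b_{ik}[-b_{kj}]_+\bigr).
\end{align*}
The same identities are invariant under $B\mapsto -B$. So they are also sufficient: once they hold, $\mu_k(-B)=B$ as well, and the class is exactly $\{B,-B\}$. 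The whole classification therefore reduces to solving these three identities.

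\textbf{Step 2: show the quiver is an oriented $3$-cycle.} The right-hand side of the identity is nonzero only when $b_{ik}$ and $b_{kj}$ have the same sign, that is, when $i\to k\to j$ is an oriented path. I will use this in two ways.
\begin{enumerate}
\item Suppose some entry is zero, say $b_{12}=0$. Irreducibility forces $b_{13},b_{23}\neq 0$, so vertex $3$ is joined to both $1$ and $2$. Applying the identity with $k=1$ to the pair $(2,3)$, or with $k=2$ to the pair $(1,3)$, then gives a contradiction, because the path through the zero edge contributes nothing while the left side is nonzero.
\item Suppose all entries are nonzero but the orientation is acyclic. Then for the middle vertex $k$ of the acyclic triangle, the right-hand side computed at $k$ forces $2b_{ij}$ to have the wrong sign, which is impossible.
\end{enumerate}
Thus, up to the action of $\sigma\in\mathfrak S_3$ (which I use to fix the orientation), we may assume $b_{12},b_{23},b_{31}>0$.

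\textbf{Step 3: reduce to a Diophantine system.} Write
\begin{align*}
a=b_{12},\quad \beta=b_{23},\quad \gamma=b_{31},\qquad a'=-b_{21},\quad \beta'=-b_{32},\quad \gamma'=-b_{13},
\end{align*}
all positive integers. The three identities become
\begin{align*}
\gamma'\beta'=2a,\qquad a'\gamma'=2\beta,\qquad \beta'a'=2\gamma.
\end{align*}
Skew-symmetrizability adds two constraints. First, $a,a'$ have the same support pattern (automatic here, since all entries are nonzero). Second, there is the cycle condition $a\beta\gamma=a'\beta'\gamma'$.

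\textbf{Step 4: solve the system (the main computation).} Multiplying the three equations gives $(a'\beta'\gamma')^2=8a\beta\gamma$. Combined with the cycle condition this yields $a'\beta'\gamma'=8$, and hence $a\beta\gamma=8$. I then enumerate the positive factorizations of $8$ into $a'\beta'\gamma'$ and recover $a,\beta,\gamma$ from the three equations, keeping only those triples for which they are integers with $a\beta\gamma=8$. I expect exactly three cases to survive:
\begin{enumerate}
\item $a'=\beta'=\gamma'=2$, giving the Markov-type matrix with all entries $\pm2$;
\item $(a',\beta',\gamma')$ a permutation of $(4,2,1)$ in suitable positions, giving the other two matrices of \eqref{sign-equivalence class};
\item the remaining assignments, which are cyclic rotations of these and are absorbed by $\sigma$.
\end{enumerate}
This enumeration is the step that needs care. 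I must check each factorization against all three equations simultaneously, and make sure the surviving matrices are genuinely skew-symmetrizable, by exhibiting $D$ such as $\diag(4,1,1)$ or $\diag(1,4,4)$. The sign and orientation bookkeeping in Step 2 is the other place where errors are likely, so I would verify it directly on a generic signed triangle.
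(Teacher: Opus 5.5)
Your proof is correct. The paper does not prove this statement. It quotes it from \cite[Theorem 3.5]{CL25} and uses it as a black box in Step 4 of the proof of \Cref{thm: classification}, so there is no in-paper argument to match. What you give is a self-contained elementary derivation. Your Step 1 is the same irreducibility trick the paper uses in Step 3 of that proof; the paper needs more work there, because it starts only from column-wise sign information rather than from $[B]=\{B,-B\}$. Your Steps 2--4 then replace the citation by a direct computation. The benefit is that the list is derived rather than quoted, and the reduction shows directly why only the patterns $(2,2,2)$ and the arrangements of $(1,2,4)$ occur.

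Two bookkeeping points should be made explicit.

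First, for each $k$ the condition $\mu_k(B)=-B$ gives two scalar identities, one for $(i,j)$ and one for $(j,i)$. That makes six in total, and in Step 3 you write down only three. This is harmless for necessity. Since Step 1 asserts sufficiency of the full set, say why the other three hold:
\begin{itemize}
\item Mutation preserves the skew-symmetrizer $D$, so $d_i\mu_k(B)_{ij}=-d_j\mu_k(B)_{ji}$, and the $(j,i)$ identity follows from the $(i,j)$ one.
\item Concretely, once $a'\beta'\gamma'=8$ you get $\beta\gamma=a'^2\beta'\gamma'/4=2a'$, and likewise $\gamma a=2\beta'$ and $a\beta=2\gamma'$.
\end{itemize}

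Second, the enumeration in Step 4 collapses at once:
\begin{itemize}
\item Your three equations with $a'\beta'\gamma'=8$ give $a=4/a'$, $\beta=4/\beta'$, $\gamma=4/\gamma'$. So $a',\beta',\gamma'\in\{1,2,4\}$ with product $8$, i.e.\ $(2,2,2)$ or one of the six permutations of $(1,2,4)$.
\item These six split into two cyclic classes. The class of $(a',\beta',\gamma')=(4,2,1)$ gives the second matrix in \eqref{sign-equivalence class}, and the class of $(1,2,4)$ gives the third.
\item The separate check by exhibiting $D=\operatorname{diag}(4,1,1)$ or $\operatorname{diag}(1,4,4)$ is redundant. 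For a $3\times 3$ matrix with this sign pattern, the cycle condition $a\beta\gamma=a'\beta'\gamma'$ is already equivalent to skew-symmetrizability.
\end{itemize}
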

\begin{remark}
	To some degree, the classification \eqref{sign-equivalence class} also gives an answer to Gyoda's question \cite[Question 20]{GM23} by \Cref{CA GCA mutation}. In addition, the corresponding generalized Cartan matrices $A(B)=(a_{ij})_{3\times 3}$, where $a_{ii}=2\ (i=1,2,3)$ and $a_{ij}=-|b_{ij}|$ for any $i\neq j$, are of \emph{hyperbolic type} in the sense of Kac-Moody algebras \cite{Kac90, HK02}.
\end{remark}
\begin{theorem}\label{thm: classification}
	All the irreducible mutation-preserving generalized cluster algebras $\mcA_{\mathbf{GCA}}=\mcA(B;R;\mcZ)$ are as follows:
	\begin{enumerate}
		\item $\mcA(B;R;\mcZ)$ is of rank $2$ with arbitrary coefficients for $\mcZ$.
		\item $\mcA(B;R;\mcZ)$ is of rank $3$ with the pair $(B;R)$ given in \Cref{tab: rank-three-classification} except (II.2) and arbitrary coefficients for $\mcZ$.
		\item $\mcA(B;R;\mcZ)$ is of rank $3$ with the pair $(B;R)$ given in (II.2) and the coefficients for $\mcZ$  satisfy the reciprocity condition. 
	\end{enumerate}
\end{theorem}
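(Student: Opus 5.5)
The plan is to reduce the mutation-preserving condition to a statement about exchange matrices, and then invoke \Cref{thm: sign-equivalent} via \Cref{CA GCA mutation}.

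\textbf{Step 1: the condition on each column.} Fix a direction $k$ and two seeds $t,t'$. Compare the maps $\hat\mu_{k;t}$ and $\hat\mu_{k;t'}$ from \eqref{eq: mutation-preserving}. Both send $f_k$ to $f_k^{-1}$ times a polynomial of degree $r_k$ in the ratio of monomials. The exchange degrees never change, and $Z_{k;t}$ is either $Z_k$ or its reversal. Equality of the two maps on generic $(f_1,\dots,f_n)$ therefore forces one of two cases:
\begin{enumerate}
\item the $k$-th columns agree, $b_{\bullet k;t}=b_{\bullet k;t'}$, and $Z_{k;t}=Z_{k;t'}$;
\item the $k$-th columns are opposite, $b_{\bullet k;t}=-b_{\bullet k;t'}$, and $Z_{k;t'}(u)=u^{r_k}Z_{k;t}(u^{-1})$.
\end{enumerate}
In case (2) we use the identity $M^{r_k}_- Z(M)=M^{r_k}_+\,\tilde Z(M^{-1})$, where $M_\pm$ denotes the positive and negative parts of the monomial. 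Distinct nonzero columns give distinct monomials, and $Z_k$ has nonzero constant and leading terms, so comparing Laurent expansions excludes every other possibility.

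\textbf{Step 2: consequences of the dichotomy.} Since every column of $B_t$ equals $\pm$ the corresponding column of $B_{t_0}$, the product $B_tR$ has the same property relative to $B_{t_0}R$. By \Cref{CA GCA mutation}, $B_tR$ runs over the classical mutation class of $BR$. Skew-symmetrizability, combined with irreducibility, then forces the signs to be consistent: if column $k$ flips but column $j$ does not, then $b_{jk}$ and $b_{kj}$ give a contradiction unless $b_{jk}=0$, and irreducibility propagates a single global sign. Hence $[BR]=\{BR,-BR\}$, so $BR$ is sign-equivalent.

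\textbf{Step 3: classification of $(B;R)$.} In rank $2$ every matrix is sign-equivalent, and mutation in direction $k$ just flips $B$ and reverses $Z_k$, so case (2) always holds. This gives item (1) of the theorem.

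In rank $n\ge 4$, or in rank $3$ away from the list, I would use the result of \cite{CL25} (irreducible sign-equivalent matrices exist only in ranks $2$ and $3$) to rule out $BR$. If only the rank $3$ list \eqref{sign-equivalence class} is available, I would instead argue that a rank $\ge4$ irreducible sign-equivalent matrix would restrict to sign-equivalent rank-$3$ principal submatrices, and derive a contradiction from this.

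In rank $3$, $BR$ must be one of the matrices in \eqref{sign-equivalence class}. I would enumerate the factorizations $BR$ with $B$ skew-symmetrizable and $R$ a positive diagonal integer matrix; these should give exactly the pairs in \Cref{tab: rank-three-classification}. Conversely, for each listed pair, sign-equivalence makes $B_t=\pm B$, and the sign alternates along every edge of $\mbT_n$.

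\textbf{Step 4: the polynomial condition.} This step is the subtle one. Along any mutation path, $Z_{k;t}$ is $Z_k$ reversed a number of times equal to the number of $k$-mutations. By the dichotomy in Step 1, we need $Z_{k;t}$ reversed exactly when column $k$ of $B_t$ has flipped. Since $B_t=(-1)^{\ell(t)}B$, where $\ell(t)$ is the path length, this requires the parity of the $k$-mutation count to match the parity of $\ell(t)$.

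This fails in general, so for a general path we must impose $Z_k=$ its reversal, i.e.\ the reciprocity condition. However, reciprocity holds automatically when $r_k\le 2$. The main obstacle is therefore to check, case by case in the table, which directions have $r_k\ge3$ together with a column that can genuinely be flipped without a $k$-mutation. I expect this to occur only in (II.2), which yields item (3), while in all other cases $r_k\le2$, which yields item (2).

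Finally, for the converse direction, one verifies that under these conditions every $\hat\mu_{k;t}$ coincides with $\hat\mu_{k;t_0}$, again using the identity from Step 1.
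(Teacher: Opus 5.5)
Your outline follows the paper's structure: column comparison, passage to $BR$, the Chen--Li classification, and enumeration of the factorizations of $BR$. But Step 3 has a genuine gap, and it cannot be closed, because item (1) is false as literally stated once some $r_k\ge 3$.

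For rank $2$ you only compare seeds that are adjacent in direction $k$. A mutation in the other direction $j\neq k$ also sends $B$ to $-B$, but it leaves $Z_k$ unchanged. By your own dichotomy from Step 1, $\hat\mu_k$ is preserved under that mutation only if $Z_k(u)=u^{r_k}Z_k(u^{-1})$. This is automatic for $r_k\le 2$, but nothing bounds $r_k$ in rank $2$. Concretely, take $B=\begin{pmatrix}0&-1\\1&0\end{pmatrix}$, $R=\diag(3,1)$, $Z_1(u)=1+2u+u^2+u^3$ and $Z_2(u)=1+u$. Then $\hat\mu_{1;t_0}$ sends $f_1$ to $f_1^{-1}(1+2f_2+f_2^2+f_2^3)$. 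At the seed $t$ obtained from $t_0$ by mutating in direction $2$, one has $b_{21;t}=-1$ and $Z_{1;t}=Z_1$, so $\hat\mu_{1;t}$ sends $f_1$ to $f_1^{-1}(1+f_2+2f_2^2+f_2^3)$. These differ. Your Step 4 in fact derives the right necessary condition (reciprocity whenever column $k$ can flip without a $k$-mutation), so your Steps 3 and 4 contradict each other. The paper's proof has the same blind spot. Its first half tacitly assumes every $Z_k$ is reciprocal (degree at most $2$, or the reciprocal form for (II.2)), and it compares only $k$-adjacent vertices before invoking induction.

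The correct picture is simpler than your Step 4 suggests, and no case-by-case check is needed. In every irreducible pattern of rank $\ge 2$, one mutation in any direction $j\ne k$ flips column $k$ and fixes $Z_k$. Hence reciprocity of every $Z_k$ is necessary, and it is vacuous exactly when $r_k\le 2$. Conversely, if all $Z_k$ are reciprocal, then $Z_{k;t}=Z_k$ for all $t$ and $\hat\mu_{k;t}$ depends only on the $k$-th column up to sign, so $B_t=\pm B$ gives mutation-preservation. In the rank-$3$ table only (II.2) has some $r_k\ge 3$, which recovers items (2) and (3). In rank $2$ the same reciprocity condition must be added whenever $r_k\ge 3$.

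Apart from this, your Step 1 is sharper than the paper's: it records the accompanying condition on $Z_k$, which is exactly what exposes the issue. Your Step 2 is a valid and shorter alternative to the paper's Step 3. You use sign-skew-symmetry of $B_t$ and $B_{t_0}$ plus connectivity, for arbitrary $t$. The paper instead applies the mutation rule at $k$-adjacent seeds and shows that a partial flip would make $BR$ block diagonal.

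For rank $\ge 4$, both you and the paper need more than the order-$3$ classification as quoted. Your principal-submatrix fallback does work, but it should be written out. An induced path $i$--$k$--$j$ would give an irreducible sign-equivalent $3\times 3$ restriction with a zero off-diagonal entry, which the classification excludes. If instead all off-diagonal entries are nonzero, every triple must be cyclically oriented, which is impossible on four vertices.
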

\begin{proof} Firstly, let $\mcA(B;R;\mcZ)$ be any of the cases $(1)$, $(2)$ and $(3)$, which is irreducible.  Then, we have $\hat{\mu}_i(B)=-B$ for any $i$. Let $t_0$ and $t_1$ be any $k$-adjoint vertices. For any $1\leq \beta
\leq r_k$, we have 
\begin{align*}
	r_k[-b_{ik;t_0}]_++\beta b_{ik;t_0}=-r_k[b_{ik;t_0}]_+-(r_k-\beta)b_{ik;t_0}.
\end{align*} It implies that 
\begin{equation}
\begin{aligned}
	r_k[-b_{ik;t_0}]_++\beta b_{ik;t_0}&=r_k[b_{ik;t_1}]_+-\beta b_{ik;t_1}\\ &= r_k[-b_{ik;t_1}]_++(r_k-\beta) b_{ik;t_1}. \label{eq: reverse}
\end{aligned}
\end{equation} Note that $\mcZ$ only contains the exchange polynomials $Z_k$ in the form of $1+z$, $1+\gamma z+z^2$ or $1+\gamma z+ \omega z^2+ \gamma z^4 +z^5$, where $\gamma, \omega \in \mbN$. Hence, based on \eqref{eq: mutation-preserving} and \eqref{eq: reverse}, we have $\hat{\mu}_{k;t_0}=\hat{\mu}_{k;t_1}$. By induction, we conclude that $\mcA_{\mathbf{GCA}}$ is mutation-preserving.

Secondly, assume that $\mcA_{\mathbf{GCA}}$ is an irreducible mutation-preserving generalized cluster algebra. Let $t_0$ and $t_1$ be any $k$-adjoint vertices. Then we have $\hat{\mu}_{k;t_0}=\hat{\mu}_{k;t_1}$. 

	\textbf{Step $1$}: By comparing the exponents of the algebraically independent variables $f_i$ in \eqref{eq: mutation-preserving}, for any $i\in \{1,\dots,n\}$, we have \begin{align*}
		\mfb_{i;t_0}=\mfb_{i;t_1}\ \text{or}\ \mfb_{i;t_0}=-\mfb_{i;t_1}, 
	\end{align*} where $\mfb_{i;t_0}$ and $\mfb_{i;t_1}$ are the $i$-th columns of $B_{t_0}$ and $B_{t_1}$ respectively.
	
	\textbf{Step $2$}: We now recover them to the column vectors of the exchange matrices of the classical cluster algebras. That is to say, by \eqref{eq: recover}, we might assume that $\widebar{\mfb}_{i;t_0}=r_i\mfb_{i;t_0}$ and $\widebar{\mfb}_{i;t_1}=r_i\mfb_{i;t_1}$, where $\widebar{\mfb}_{i;t_0}$ and $\widebar{\mfb}_{i;t_1}$ are the $i$-th columns of $\widebar{B}_{t_0}=B_{t_0}R$ and $\widebar{B}_{t_1}=B_{t_1}R$ respectively. Then, both of the exchange matrices are irreducible and for any $i\in \{1,\dots,n\}$, we have \begin{align*}
		\widebar{\mfb}_{i;t_0}=\widebar{\mfb}_{i;t_1}\ \text{or}\ \widebar{\mfb}_{i;t_0}=-\widebar{\mfb}_{i;t_1}.
	\end{align*}
	
	\textbf{Step $3$}: We claim that for any $i\in \{1,\dots,n\}$, $\widebar{\mfb}_{i;t_0}=-\widebar{\mfb}_{i;t_1}$, which means that $\widebar{B}_{t_0}=-\widebar{B}_{t_1}$. Without loss of generality, up to permutation, we might assume that $k=1$ and $\widebar{b}_{21;t_0}, \dots, \widebar{b}_{l1;t_0}\neq 0$, but $\widebar{b}_{l+1,1;t_0}, \dots, \widebar{b}_{n1;t_0} =0$. This implies that $\widebar{\mfb}_{j;t_0}=-\widebar{\mfb}_{j;t_1}$ for $2\leq j\leq l$. Then, for $l+1\leq h\leq n$ and $2\leq j\leq n$, we have 
	\begin{align*}
		\widebar{b}_{hj;t_1}=\widebar{b}_{hj;t_0}+\sgn(\widebar{b}_{h1;t_0})[\widebar{b}_{h1;t_0}\widebar{b}_{1j;t_0}]_+=\widebar{b}_{hj;t_0}.
	\end{align*} However, for $2\leq j\leq l$, we have $\widebar{b}_{hj;t_1}=-\widebar{b}_{hj;t_0}$. Hence, we obtain that $\widebar{b}_{hj;t_0}=0$ for any $l+1\leq h\leq n$ and $1\leq j\leq l$. This also implies that $\widebar{b}_{hj;t_0}=0$ for any $l+1\leq j\leq n$ and $1\leq h\leq l$. It follows that $\widebar{B}_{t_0}$ is reducible, which is a contradiction. Thus, the claim holds, that is $\mu_k(\widebar{B}_{t_0})=-\widebar{B}_{t_0}$ for any $k\in \{1,\dots,n\}$.
	
	\textbf{Step $4$}: By Step 3 and \Cref{thm: sign-equivalent}, we conclude that $\widebar{B}_{t_0}$ must be one of three types in \eqref{sign-equivalence class}. Then, all the possible pairs $(B_{t_0};R)$ are listed in \Cref{tab: rank-three-classification}. Equivalently, all the possible pairs $(B;R)$ are also of these forms. Note that for $n=2$ and $n=3$ except for case (II.2), the coefficients of $Z_k$ can be arbitrary. However, in case (II.2), the reciprocity condition should hold.
	
	Hence, all the irreducible mutation-preserving generalized cluster algebras are classified.
\end{proof}
\begin{table}[p]
\centering
\renewcommand{\arraystretch}{1.13}
\setlength{\tabcolsep}{8pt}

\begin{tabular}{c c c p{6cm}}
\toprule
\textbf{Case} & \textbf{Matrix $B$} & \textbf{Degree $R$} & \textbf{Description} \\
\midrule

(I.1) &
$\sigma\begin{pmatrix}
0 & 2 & -2\\
-2 & 0 & 2\\
2 & -2 & 0
\end{pmatrix}$ &
$\begin{pmatrix}
1 & 0 & 0\\
0 & 1 & 0\\
0 & 0 & 1
\end{pmatrix}$ &
$BR=\sigma\begin{pmatrix}
0 & 2 & -2\\
-2 & 0 & 2\\
2 & -2 & 0
\end{pmatrix},\ \sigma\in \mathfrak{S}_3$ \\

\addlinespace
(I.2) &
$\sigma\begin{pmatrix}
0 & 2 & -2\\
-1 & 0 & 2\\
1 & -2 & 0
\end{pmatrix}$ &
$\sigma\begin{pmatrix}
2 & 0 & 0\\
0 & 1 & 0\\
0 & 0 & 1
\end{pmatrix}$ &
$BR=\sigma\begin{pmatrix}
0 & 2 & -2\\
-2 & 0 & 2\\
2 & -2 & 0
\end{pmatrix},\ \sigma\in \mathfrak{S}_3$ \\

\addlinespace
(I.3) &
$\sigma\begin{pmatrix}
0 & 1 & -2\\
-1 & 0 & 2\\
1 & -1 & 0
\end{pmatrix}$ &
$\sigma\begin{pmatrix}
2 & 0 & 0\\
0 & 2 & 0\\
0 & 0 & 1
\end{pmatrix}$ &
$BR=\sigma\begin{pmatrix}
0 & 2 & -2\\
-2 & 0 & 2\\
2 & -2 & 0
\end{pmatrix},\ \sigma\in \mathfrak{S}_3$ \\

\addlinespace
(I.4) &
$\sigma\begin{pmatrix}
0 & 1 & -1\\
-1 & 0 & 1\\
1 & -1 & 0
\end{pmatrix}$ &
$\begin{pmatrix}
2 & 0 & 0\\
0 & 2 & 0\\
0 & 0 & 2
\end{pmatrix}$ &
$BR=\sigma\begin{pmatrix}
0 & 2 & -2\\
-2 & 0 & 2\\
2 & -2 & 0
\end{pmatrix},\ \sigma\in \mathfrak{S}_3$ \\

\addlinespace

(II.1) &
$\sigma\begin{pmatrix}
0 & 1 & -1\\
-2 & 0 & 2\\
2 & -2 & 0
\end{pmatrix}$ &
$\sigma\begin{pmatrix}
2 & 0 & 0\\
0 & 1 & 0\\
0 & 0 & 1
\end{pmatrix}$ &
$BR=\sigma\begin{pmatrix}
0 & 1 & -1\\
-4 & 0 & 2\\
4 & -2 & 0
\end{pmatrix},\ \sigma\in \mathfrak{S}_3$ \\

\addlinespace

(II.2) &
$\sigma\begin{pmatrix}
0 & 1 & -1\\
-1 & 0 & 2\\
1 & -2 & 0
\end{pmatrix}$ &
$\sigma\begin{pmatrix}
4 & 0 & 0\\
0 & 1 & 0\\
0 & 0 & 1
\end{pmatrix}$ &
$BR=\sigma\begin{pmatrix}
0 & 1 & -1\\
-4 & 0 & 2\\
4 & -2 & 0
\end{pmatrix},\ \sigma\in \mathfrak{S}_3$ \\
(II.3) &
$\sigma\begin{pmatrix}
0 & 1 & -1\\
-4 & 0 & 2\\
4 & -2 & 0
\end{pmatrix}$ &
$\begin{pmatrix}
1 & 0 & 0\\
0 & 1 & 0\\
0 & 0 & 1
\end{pmatrix}$ &
$BR=\sigma\begin{pmatrix}
0 & 1 & -1\\
-4 & 0 & 2\\
4 & -2 & 0
\end{pmatrix},\ \sigma\in \mathfrak{S}_3$ \\

\addlinespace
(III.1) &
$\sigma\begin{pmatrix}
0 & 4 & -4\\
-1 & 0 & 2\\
1 & -2 & 0
\end{pmatrix}$ &
$\begin{pmatrix}
1 & 0 & 0\\
0 & 1 & 0\\
0 & 0 & 1
\end{pmatrix}$ &
$BR=\sigma\begin{pmatrix}
0 & 4 & -4\\
-1 & 0 & 2\\
1 & -2 & 0
\end{pmatrix},\ \sigma\in \mathfrak{S}_3$ \\

\addlinespace

(III.2) &
$\sigma\begin{pmatrix}
0 & 2 & -4\\
-1 & 0 & 2\\
1 & -1 & 0
\end{pmatrix}$ &
$\sigma\begin{pmatrix}
1 & 0 & 0\\
0 & 2 & 0\\
0 & 0 & 1
\end{pmatrix}$ &
$BR=\sigma\begin{pmatrix}
0 & 4 & -4\\
-1 & 0 & 2\\
1 & -2 & 0
\end{pmatrix},\ \sigma\in \mathfrak{S}_3$ \\

\addlinespace

(III.3) &
$\sigma\begin{pmatrix}
0 & 4 & -2\\
-1 & 0 & 1\\
1 & -2 & 0
\end{pmatrix}$ &
$\sigma\begin{pmatrix}
1 & 0 & 0\\
0 & 1 & 0\\
0 & 0 & 2
\end{pmatrix}$ &
$BR=\sigma\begin{pmatrix}
0 & 4 & -4\\
-1 & 0 & 2\\
1 & -2 & 0
\end{pmatrix},\ \sigma\in \mathfrak{S}_3$ \\

\addlinespace
(III.4) &
$\sigma\begin{pmatrix}
0 & 2 & -2\\
-1 & 0 & 1\\
1 & -1 & 0
\end{pmatrix}$ &
$\sigma\begin{pmatrix}
1 & 0 & 0\\
0 & 2 & 0\\
0 & 0 & 2
\end{pmatrix}$ &
$BR=\sigma\begin{pmatrix}
0 & 4 & -4\\
-1 & 0 & 2\\
1 & -2 & 0
\end{pmatrix},\ \sigma\in \mathfrak{S}_3$ \\

\addlinespace
\bottomrule
\end{tabular}
\vspace{1mm}
\caption{Classification of the pairs $(B;R)$ of rank $3$}
\label{tab: rank-three-classification}
\end{table}
\begin{remark}
	Note that the elements in (III.2) and (III.3) are completely different (up to a sign). However, the corresponding generalized cluster algebras are isomorphic. Moreover, except class (I.1) has $2$ elements, each class has $6$ elements, where the corresponding generalized cluster algebras are isomorphic. Hence, there are $10$ non-isomorphic irreducible mutation-preserving generalized cluster algebras of rank $3$. In addition, classes (I.1), (II.3) and (III.1) are exactly the classical ones in \eqref{sign-equivalence class}.
 \end{remark}
\section{A Markov-type Diophantine equation with multiple orbits}\label{sec: diophantine}
In this section, we apply generalized mutation invariants to a Markov-type Diophantine equation and determine its positive integer solutions. We establish the existence criterion, classify the resulting mutation orbits, and relate the generalized equation to a classical cluster-algebraic counterpart.

In \cite{Pro20}, the relation between the classical Markov equation and the cluster mutation invariant \begin{align*}
	\dfrac{x^2+y^2+z^2}{xyz}
\end{align*} was found, which corresponds to the first exchange matrix of \eqref{sign-equivalence class} or Markov quiver. 
In \cite{CL25}, another Markov-type Diophantine equation corresponding to the cluster mutation invariant \begin{align*}
	\dfrac{x^2+y^4+z^4+2xy^2+2xz^2}{xy^2z^2}
\end{align*} given by \cite{Lam16} was studied. Their mutations come from the classical cluster algebras, whose exchange matrices $B$ are the first and second matrices of \eqref{sign-equivalence class}, respectively.   
\begin{theorem}[{\cite{Hur07},\cite{Lam16}, \cite{CL25}}] Let $k\in \mbN$. Then, the following statements hold. 
	\begin{enumerate}
		\item $x^2+y^2+z^2=kxyz$ has positive integer solutions if and only if $k=1,3$.
		\item $x^2+y^4+z^4+2xy^2+2xz^2=kxy^2z^2$ has positive integer solutions if and only if $k=7$.
	\end{enumerate} Moreover, all the positive integer solutions can be generated by some initial solution via finite cluster mutations.
\end{theorem}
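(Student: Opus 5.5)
The plan is Vieta jumping (descent) on the maximal component, separately for each equation, using the cluster mutations as the jumps.

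\textbf{Part (1).} The mutations of the Markov quiver act on a solution $(x,y,z)$ of $x^2+y^2+z^2=kxyz$ by
\[
x\mapsto x'=\frac{y^2+z^2}{x}=kyz-x,
\]
and similarly in $y$ and $z$. Since $x'=kyz-x$ is an integer and $xx'=y^2+z^2>0$, these maps preserve positive integer solutions.

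1. Take a solution and order it so that $x\ge y\ge z$. If $x>y$, then
\[
x'=\frac{y^2+z^2}{x}<\frac{2y^2}{y}=2y,
\]
and the sharper estimate $x'<y\le x$ holds whenever $x^2>y^2+z^2$. Since $x>y\ge z$ and the equation forces $x^2-kxyz+(y^2+z^2)=0$, the two roots $x$ and $x'$ of this quadratic in $x$ satisfy $x\ge x'$ exactly when $x\ge kyz/2$. I expect that to follow from $x>y$ by evaluating the quadratic at $t=y$: the value is $2y^2+z^2-ky^2z$, and its sign decides which side of $y$ each root lies on.
2. The descent therefore terminates at a solution with $x=y$. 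Then $z^2=(kz-2)x^2$, so $x\mid z$. Together with $z\le x$ this gives $z=x$ and $3=kx$, i.e. $(1,1,1)$ with $k=3$ or $(3,3,3)$ with $k=1$.
3. Hence solutions exist exactly for $k=1,3$, and every solution is generated from $(3,3,3)$ or $(1,1,1)$ respectively.

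\textbf{Part (2).} Here the cluster algebra has $B$ the second matrix of \eqref{sign-equivalence class}. Its mutations, derived from \eqref{eq: generalized cluster variables} with $R=I$, act on a solution of $x^2+y^4+z^4+2xy^2+2xz^2=kxy^2z^2$ by:
\begin{itemize}
\item $x\mapsto (y^2-z^2)^2/x=ky^2z^2-2y^2-2z^2-x$, which is quadratic Vieta in $x$;
\item $y\mapsto (x+z^2)/y$, which is Vieta in $y^2$ up to the coupling, since the equation is quadratic in $y^2$ with root product $(x+z^2)^2$;
\item $z\mapsto (x+y^2)/z$ analogously.
\end{itemize}

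Integrality follows from the sum of the roots in each case. Positivity holds because the product of the roots is a positive square, or $(y^2-z^2)^2$, which vanishes only when $y=z$.

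Descent is on the height $H=\max(x,y^2,z^2)$:
1. If $x$ is the maximum, reduce $x$.
2. If $y^2$ is the maximum, replace $y^2$ by $(x+z^2)^2/y^2$.
3. Show that the height strictly decreases unless we are at a minimal configuration.

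The minimal configurations should satisfy $x=y^2=z^2$ or $y=z$.
\begin{itemize}
\item If $y=z$, then $x^2+2y^4+4xy^2=kxy^4$, so $x\mid 2y^4$ and $y^2\mid x^2$. Writing $x=uy^2$ gives $u^2+2+4u=kuy^2$, so $u\mid 2$.
\item $u=1$ gives $7=ky^2$, so $k=7,\ y=1$.
\item $u=2$ gives $14=2ky^2$, so $k=7,\ y=1$, i.e. $(2,1,1)$, which is a mutation of $(1,1,1)$.
\end{itemize}
This yields $k=7$ only, with initial solution $(1,1,1)$.

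\textbf{Main obstacle.} The delicate step is the case analysis showing the maximal component strictly drops, in particular in part (2), where the three mutations are not all of the same shape. I would first try the standard trick of evaluating the quadratic in the largest variable at the second-largest value to locate its other root.
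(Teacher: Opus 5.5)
The genuine gap is in Part (2). Part (1) is only missing one sign check, which is easy to supply.

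\textbf{Part (1).} This is the classical Hurwitz descent and is essentially fine, but the sign you ``expect'' must be proved. If $kz\le 2$ then $kxyz\le 2xy\le x^2+y^2<x^2+y^2+z^2$, which is impossible. So $kz\ge 3$, and therefore $2y^2+z^2-ky^2z\le z^2-y^2\le 0$. This gives $x'\le y<x$ whenever $x>y$. Your remark that $x^2>y^2+z^2$ yields $x'<y$ is incorrect; it only yields $x'<x$.

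\textbf{Part (2).} First, your $x$-mutation is wrong. The quadratic in $x$ has constant term $y^4+z^4$, so $x'=(y^4+z^4)/x$, not $(y^2-z^2)^2/x$; this is also the cluster mutation read off from the matrix. Nothing degenerates at $y=z$, so your predicted terminal set ``$y=z$ or $x=y^2=z^2$'' has no basis. More importantly, the descent itself is never carried out, and it is the entire content of the statement.

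Work in the variables $(x,Y,Z)=(x,y^2,z^2)$. The equation becomes $x^2+Y^2+Z^2+2xY+2xZ=kxYZ$, with mutations $x'=(Y^2+Z^2)/x$, $Y'=(x+Z)^2/Y$ and $Z'=(x+Y)^2/Z$. Evaluate the relevant quadratic at the second-largest entry. You get one of
\[
4Y^2+2YZ+Z^2-kY^2Z,\quad 4x^2+2xZ+Z^2-kx^2Z,\quad 2Z^2+4xZ+x^2-kxZ^2 .
\]
Hence the maximum strictly drops whenever $k$ times the smallest entry exceeds $7$. This settles $k\ge 8$ and leaves only $(1,1,1)$ as a terminal triple for $k=7$.

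For every $1\le k\le 6$, however, the descent does stop at triples with $Y\neq Z$:
\begin{itemize}
\item $(13,12,5)$ and $(5,9,7)$ for $k=1$;
\item $(5,4,3)$ for $k=2$;
\item $(2,2,4)$ for $k=3$;
\item $(1,3,2)$ for $k=4$;
\item $(2,3,1)$ for $k=5$;
\item $(1,1,2)$ for $k=6$.
\end{itemize}
These are excluded only because $Y$ and $Z$ are not both perfect squares. Often exactly one of them is a square, so both must be checked.

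What is needed is therefore, for each $k\le 6$:
\begin{itemize}
\item a complete list of terminal triples, which is finite by the bound above plus a discriminant check;
\item a verification that none of them lies in the image of $S_2\colon(x,y,z)\mapsto(x,y^2,z^2)$.
\end{itemize}
This suffices because $S_2$ intertwines the classical mutations with these generalized ones, as in \Cref{thm:classical-from-I4}, so the image of $S_2$ is preserved along the descent. The paper only cites this theorem and gives no proof, but this is exactly the mechanism of its \Cref{thm: main} combined with \Cref{prop: no square k34}.

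Finally, in your $y=z$ case, $y^2\mid x^2$ does not give $x=uy^2$ with $u\in\mbZ$. Take $u=p/q$ in lowest terms instead. Then $p^2+4pq+2q^2=kpqy^2$ forces $q=1$ and $p\mid 2$, and your conclusion goes through.
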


Now, we focus on the classical cluster algebra $\mcA(B_0)$, whose initial exchange matrix is the third type of \eqref{sign-equivalence class}: \begin{align}
	B_0=\begin{pmatrix}0 & -4 & 4\\ 1 & 0 & -2\\ -1 & 2 & 0\end{pmatrix}.\label{4-matrix}
\end{align} Note that $[B_0]=\{B_0, -B_0\}$ and the cluster mutations are given by  
\begin{equation*}
%\begin{aligned}
%	\mu_{1}(x_1,x_2,x_3)&=\left(\dfrac{x_2+x_3}{x_1},x_2,x_3\right),\ \mu_{2}(x_1,x_2,x_3)=\left(x_1,\dfrac{x_1^4+x_3^2}{x_2},x_3\right),\\ \mu_{3}(x_1,x_2,x_3)&=\left(x_1,x_2,\dfrac{x_1^4+x_2^2}{x_3}\right).
%\end{aligned}
\begin{aligned}
	\mu_{1}(x,y,z)&=\left(\dfrac{y+z}{x},y,z\right),\ \mu_{2}(x,y,z)=\left(x,\dfrac{x^4+z^2}{y},z\right),\\ \mu_{3}(x,y,z)&=\left(x,y,\dfrac{x^4+y^2}{z}\right).
\end{aligned}
\end{equation*} Since only three variables are involved, we denote them by $\{x,y,z\}$ rather than usual cluster notations $\{x_1,x_2,x_3\}$.
 Although the second and third exchange matrices of \eqref{sign-equivalence class} are the same up to transposition, their mutation invariants are completely different. In \cite{Kau24}, a classical cluster mutation invariant of $\mcA(B_0)$ is given as 
\begin{align*}\mcT(x,y,z)=\dfrac{x^4+y^2+z^2+2yz}{x^2yz}.\end{align*} Then, we aim to study the new corresponding Markov-type Diophantine equation \begin{align*} x^4+y^2+z^2+2yz=kx^2yz, \end{align*} where $k\in \mbN$.
 For later use, we first briefly recall some basic proofs and properties.
%$$F_k(x,y,z)=x^4+y^2+z^2+2yz-kx^2yz$$
\begin{lemma}
	$\mcT(x,y,z)$ is a Laurent mutation invariant of $\mcA(B_0)$. 
\end{lemma}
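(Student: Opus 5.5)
Since $B_0$ is sign-equivalent, the mutation maps $\mu_1,\mu_2,\mu_3$ displayed above are the same at every seed; the preceding remark asserts this, and it also follows from the first half of the proof of \Cref{thm: classification}. Hence, by \Cref{equivalent}, it suffices to check that $\mcT(\mu_k(x,y,z))=\mcT(x,y,z)$ for $k=1,2,3$. That $\mcT$ is a Laurent polynomial is immediate, since its denominator $x^2yz$ is a monomial. So the whole proof reduces to three direct verifications. I will use the factorization
\begin{align*}
\mcT(x,y,z)=\frac{x^4+(y+z)^2}{x^2yz},
\end{align*}
which makes each check short.

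\textbf{Direction $1$.} Put $x'=(y+z)/x$. Then
\begin{align*}
x'^4+(y+z)^2=\frac{(y+z)^4}{x^4}+(y+z)^2=\frac{(y+z)^2\bigl((y+z)^2+x^4\bigr)}{x^4},
\end{align*}
and $x'^2yz=(y+z)^2yz/x^2$. Dividing, the factor $(y+z)^2$ cancels and we get $\bigl(x^4+(y+z)^2\bigr)/(x^2yz)$, which is $\mcT$.

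\textbf{Directions $2$ and $3$.} These are symmetric under swapping $y$ and $z$, so I treat $\mu_3$ only. View the numerator as a quadratic in $z$:
\begin{align*}
x^4+y^2+z^2+2yz=z^2+2yz+(x^4+y^2).
\end{align*}
The equation $\mcT=c$ becomes $z^2+(2y-cx^2y)z+(x^4+y^2)=0$. By Vieta, its two roots $z$ and $z'$ satisfy $zz'=x^4+y^2$. This is exactly the Vieta involution $z\mapsto (x^4+y^2)/z$, which is $\mu_3$.

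To make this rigorous, I will substitute directly. Set $z'=(x^4+y^2)/z$. Then
\begin{align*}
z'^2+2yz'+x^4+y^2=\frac{x^4+y^2}{z^2}\bigl(x^4+y^2+2yz+z^2\bigr),
\end{align*}
and $x^2yz'=x^2y(x^4+y^2)/z$. Their quotient is again $\mcT$.

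No step is a real obstacle. The only point needing care is the reduction to three seed-independent maps, and that is supplied by sign-equivalence together with \Cref{equivalent}.
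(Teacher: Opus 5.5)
Your proof is correct and follows the paper's argument. Both reduce the lemma to checking $\mcT(\mu_i(x,y,z))=\mcT(x,y,z)$ for $i=1,2,3$, and both obtain one of $\mu_2,\mu_3$ from the other by the $y\leftrightarrow z$ symmetry. The only difference is cosmetic: the paper phrases each check through Vieta's formulas (for instance, $x$ and $(y+z)/x$ are both roots of $\lambda^4-\mcT yz\lambda^2+(y+z)^2$), while you carry out the substitutions explicitly, which is equally valid.
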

\begin{proof} We only need to prove that $\mcT(\mu_i(x,y,z))=\mcT(x,y,z)$ for $i=1,2,3$. Let $\mcT=\mcT(x,y,z)$. Firstly, if $i=1$, then $x$ is a zero point of the biquadratic polynomial $f(\lambda)=\lambda^4-\mcT yz\lambda^2+(y+z)^2$. Hence, by Vieta's formula, we find that $x^{\prime}=\frac{y+z}{x}$ is another zero point, which implies that $\mcT(\mu_1(x,y,z))=\mcT(x,y,z)$. 

Secondly, if $i=2$, then $y$ is a zero point of the quadratic polynomial $g(\lambda)=\lambda^2+(2z-\mcT x^2z)\lambda+x^4+z^2$. Then, by Vieta's formula, $y^{\prime}=\frac{x^4+y^2}{y}$ is its another zero point. Hence, we have $\mcT(\mu_2(x,y,z))=\mcT(x,y,z)$, which also implies that $\mcT(\mu_3(x,y,z))=\mcT(x,y,z)$ by symmetry.
%Since $\mu_1(X,Y,Z)=(\dfrac{Y+Z}{X},Y,Z)$, we have \begin{align*}M\circ \mu_1(X,Y,Z)&=\frac{(\frac{Y+Z}{X})^4+Y^2+Z^2+2YZ}{(\frac{Y+Z}{X})^2YZ}\\ &=\frac{(Y+Z)^4+X^4(Y+Z)^2}{X^2(Y+Z)^2YZ}\\ &=\frac{(Y+Z)^2+X^2}{X^2YZ}\\ &=M(X,Y,Z).\end{align*} Similarly, we also have $M\circ \mu_i(X,Y,Z)=M(X,Y,Z),i=2,3.$ 
Thus, $\mcT(x,y,z)$ is a mutation invariant of $\mcA(B_0)$. 
\end{proof}
\begin{remark}\label{lem: integrality} In the proof, the mutation rules $yy^{\prime}=x^4+z^2$ and $zz^{\prime}=x^4+y^2$ can be regarded as Vieta's formulas. Note that $y+y^{\prime}=\mcT x^2z-2z$ and $z+z^{\prime}=\mcT x^2y-2y$. Hence, if $(x,y,z,\mcT)\in \mbN^4$, then so are $y^{\prime}$ and $z^{\prime}$. In addition, it also implies that ${x^{\prime}}^2=\mcT yz-x^2 = \frac{(y+z)^2}{x^2}\in \mbN$. Hence, $x^{\prime}$ is also a positive integer.
\end{remark}

In fact, we may consider a variant of cluster mutation maps $\widehat{\mu}_i: \mbQ_+^3\rightarrow \mbQ_+^3$ given by \begin{equation}
\begin{aligned}
	\widehat{\mu}_1(x,y,z)&=\left(\dfrac{(y+z)^2}{x},y,z\right),\
\widehat{\mu}_2(x,y,z)=\left(x,\dfrac{x^2+z^2}{y},z\right),\\
\widehat{\mu}_3(x,y,z)&=\left(x,y,\dfrac{x^2+y^2}{z}\right).
\end{aligned}\label{eq: generalized mutation rules}
\end{equation} Note that they correspond to the generalized cluster mutations \eqref{eq: generalized cluster variables} associated with 
\begin{align*}
	B=\begin{pmatrix}0 & -2 & 2\\ 1 & 0 & -2\\ -1 & 2 & 0\end{pmatrix},\ R=\begin{pmatrix}2 & 0 & 0\\ 0 & 1 & 0\\ 0 & 0 & 1\end{pmatrix}.
\end{align*} Similarly, we may directly check that the following Laurent polynomial is a generalized mutation invariant, where the integrality is also preserved as \Cref{lem: integrality}.
\begin{align*}
	\widehat\mcT(x,y,z)=\dfrac{x^2+y^2+z^2+2yz}{xyz}.
\end{align*}

Let $S$ be a map from $\mbQ_+^3$ to $\mbQ_+^3$ defined by $S(x,y,z)=(x^2,y,z)$. Then, we have $\mu_i$ and $\widehat{\mu}_i$ are \emph{compatible} with $S$ as follows. 
\begin{lemma}\label{contract} For any $i=1,2,3$, the equality holds:
\begin{align*}\widehat{\mu}_i(S(x,y,z))=S(\mu_i(x,y,z)).\end{align*}
\end{lemma}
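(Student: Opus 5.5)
This is a direct computation, done one direction at a time. Both sides are maps $\mbQ_+^3\to\mbQ_+^3$ that alter only the $i$-th coordinate. Moreover $S$ changes only the first coordinate. So for each $i$ it suffices to compare the single coordinate that can differ; every other coordinate agrees trivially.

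For $i=1$ the changed coordinate is the first one.
\begin{itemize}[leftmargin=2em]
\item The left side is $\widehat{\mu}_1(x^2,y,z)=\left(\frac{(y+z)^2}{x^2},y,z\right)$, by the rule \eqref{eq: generalized mutation rules} applied to the triple $(x^2,y,z)$.
\item The right side is $S\left(\frac{y+z}{x},y,z\right)=\left(\frac{(y+z)^2}{x^2},y,z\right)$.
\end{itemize}
These agree. This is exactly the observation in \Cref{lem: integrality} that ${x'}^2=\frac{(y+z)^2}{x^2}$.

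For $i=2$ the changed coordinate is the second one.
\begin{itemize}[leftmargin=2em]
\item The left side is $\widehat{\mu}_2(x^2,y,z)=\left(x^2,\frac{(x^2)^2+z^2}{y},z\right)=\left(x^2,\frac{x^4+z^2}{y},z\right)$.
\item The right side is $S\left(x,\frac{x^4+z^2}{y},z\right)=\left(x^2,\frac{x^4+z^2}{y},z\right)$.
\end{itemize}
These agree.

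The case $i=3$ is identical to $i=2$ after swapping the roles of $y$ and $z$. Both sides equal $\left(x^2,y,\frac{x^4+y^2}{z}\right)$.

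No step is a genuine obstacle. The only point needing care is to substitute $x^2$ consistently for the first argument of $\widehat{\mu}_i$. One should also note that both maps preserve $\mbQ_+^3$, so the compositions are well defined on the stated domain.
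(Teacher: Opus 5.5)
Your proposal is correct and takes the same approach as the paper, which simply says the lemma follows by direct calculation. Your coordinate-by-coordinate check is exactly that calculation written out, including the key substitution of $x^2$ into the first slot of $\widehat{\mu}_2$ and $\widehat{\mu}_3$, which produces the $x^4$ appearing in $\mu_2$ and $\mu_3$.
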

\begin{proof}
This follows by direct calculation. 
\end{proof}

\begin{proposition}\label{prop: k larger than 5}
	Let $k\in \mbN$ and the Diophantine equation be as follows:\begin{align*}
		x^2+y^2+z^2+2yz=kxyz.
	\end{align*} Then, if $k\geq 6$, there is no positive integer solution.
\end{proposition}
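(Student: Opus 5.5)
The plan is to use a Vieta-type descent with the generalized mutations $\widehat{\mu}_i$ of \eqref{eq: generalized mutation rules}, and then bound $k$ at a minimal solution. Rewrite the equation as $x^2+(y+z)^2=kxyz$, so $\widehat\mcT(x,y,z)=k$.

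\textbf{Step 1: the mutations act on the positive solution set.} As in \Cref{lem: integrality}, view the equation as a quadratic in each variable. The other root is given by $x'=kyz-x=(y+z)^2/x$, $y'=kxz-2z-y=(x^2+z^2)/y$ and $z'=kxy-2y-z=(x^2+y^2)/z$. Each new value is an integer by the sum formula and positive by the product formula. Hence every $\widehat{\mu}_i$ maps positive integer solutions for $k$ to positive integer solutions for the same $k$.

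\textbf{Step 2: pick a minimal solution.} Suppose a solution exists. Choose one minimizing $x+y+z$. Minimality gives $x'\ge x$, $y'\ge y$ and $z'\ge z$, that is,
\begin{align*}
x\le y+z,\qquad y^2\le x^2+z^2,\qquad z^2\le x^2+y^2.
\end{align*}
The equation is symmetric in $y,z$, so we may assume $y\ge z$. Then the only nontrivial constraint from the last two inequalities is $x\ge \sqrt{y^2-z^2}$.

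\textbf{Step 3: bound $k$ at the minimal solution.} Write
\begin{align*}
k=\phi(x):=\frac{x}{yz}+\frac{(y+z)^2}{xyz}.
\end{align*}
For fixed $y,z$, $\phi$ is convex in $x>0$. So on the admissible interval $x_0\le x\le y+z$, with $x_0=\max\{1,\sqrt{y^2-z^2}\}$, it is bounded by its values at the endpoints.

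At the upper endpoint, $\phi(y+z)=2(1/y+1/z)\le 4$.

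At the lower endpoint there are two cases.
\begin{enumerate}
\item If $y=z$, then $x_0=1$ and $\phi(1)=1/y^2+4/y\le 5$.
\item If $y>z$, then $x_0=\sqrt{(y-z)(y+z)}$ and
\begin{align*}
\phi(x_0)=\frac{\sqrt{y^2-z^2}}{yz}+\frac{(y+z)^{3/2}}{(y-z)^{1/2}\,yz}.
\end{align*}
The first term is less than $1$. The second term should be at most about $\frac{(y+z)^{3/2}}{(y-z)^{1/2}yz}\le 3$. One can see this by setting $y=z+d$ with $d\ge1$, $z\ge1$ and checking monotonicity in $z$ and $d$; small values such as $(y,z)=(2,1)$ give $\phi(x_0)\approx 3.5$.
\end{enumerate}
So in every case $k=\phi(x)\le 5$. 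This contradicts $k\ge 6$, so no solution exists for $k\ge6$.

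The main obstacle is the elementary but fiddly estimate in case 2 when $y>z$. There one has to control $\phi(x_0)$ uniformly, being careful that $x_0$ is only a real lower bound while $x$ is an integer. If the convexity bound is too crude in some corner, such as $z=1$ with $y$ small, I would fall back on direct enumeration. Those corner cases force $x$ into a short integer range, and one can check $k=\widehat\mcT(x,y,z)<6$ directly.
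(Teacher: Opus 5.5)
Your argument is correct, but it is organized differently from the paper's. The paper does not pass to a minimal solution. Assuming $B\ge C$, it evaluates the Vieta quadratic ($G_k$ or $H_k$) at the second-largest entry of an arbitrary solution. It then uses the one-line estimate $2B^2+2BC+C^2\le 5B^2<6B^2\le kB^2C$, and its two analogues, to show that for $k\ge 6$ mutating in the direction of the maximal entry always strictly lowers the maximum. This gives an infinite descent. You instead fix a sum-minimal solution, convert minimality into the fundamental-domain inequalities $x\le y+z$ and $x^2\ge y^2-z^2$, and bound $k=\phi(x)$ on that region by convexity. Your route yields more information: an explicit region that must contain every terminal solution, for every $k$, which is exactly what one would enumerate to classify orbits. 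The price is an analytic endpoint estimate. The paper's route is shorter because for $k\ge6$ the crude bound $5B^2$ already suffices, and the same template is reused, with extra case analysis, for $k\le5$ in the orbit theorem.

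Two things need tidying up.
\begin{enumerate}
\item When $y=z$ you have $\phi(1)=4+1/y^2$, not $1/y^2+4/y$. The bound $\phi(1)\le 5$ is unaffected.
\item When $y>z$, your hedged claim should be replaced by a proof, which is easy. Write $y=z+d$. The second term is $g(z,d)=(2z+d)^{3/2}/\bigl(\sqrt{d}\,z(z+d)\bigr)$. Its logarithmic derivatives $\frac{3}{2z+d}-\frac1z-\frac1{z+d}$ and $\frac{3/2}{2z+d}-\frac1{2d}-\frac1{z+d}$ are both negative; the first because $3z(z+d)<(2z+d)^2$. Hence $g\le g(1,1)=3\sqrt3/2<3$. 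With the first term $\sqrt{y^2-z^2}/(yz)<1/z\le1$, this gives $\phi(x_0)<4$, so no fallback enumeration is needed.
\end{enumerate}
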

\begin{proof} 
%If $k=1$, the equation has a positive integer solution $(9,6,3)$.
%If $k=2$, the equation has a positive integer solution $(4,2,2)$.
%If $k=3$, the equation has a positive integer solution $(3,2,1)$.
%If $k=4$, the equation has a positive integer solution $(2,1,5)$.
%If $k=5$, the equation has a positive integer solution $(1,1,1)$.
Assume that $(A,B,C)$ is a positive integer solution and $B\geq C$ by symmetry. We then claim that after mutating a solution at the direction where the component is maximal, we can obtain a new solution whose maximal component is strictly decreasing. If $A\geq B$, let $G_k(\lambda)=\lambda^2-kBC\lambda+(B+C)^2$. Then we have \begin{align*}G_k(B)=2B^2+C^2+2BC-kB^2C\leq 5B^2-6B^2=-B^2<0.\end{align*} Thus, $\max(\widehat{\mu}_1(A,B,C))=B< A$. If $B\geq A \geq C$, let $H_k(\lambda)=\lambda ^2+(2C-kAC) \lambda+A^2+C^2$. Then, we obtain that 
\begin{align*}H_k(A)=A^2+2CA-kA^2C+A^2+C^2 \leq 5A^2-6A^2=-A^2<0.\end{align*} Thus, we have $\max(\widehat{\mu}_2(A,B,C))=A< B$. If $B\geq C \geq A$, then \begin{align*}H_k(C)=2C^2+A^2+2AC-kAC^2 \leq 5C^2-6C^2=-C^2<0.\end{align*} Hence, it implies that $\max(\widehat{\mu}_2(A,B,C))=C< B$. Since such a process must terminate due to the lower bound, we arrive at a contradiction.\end{proof}
For later use, we denote the above two \emph{discriminant polynomials} by \begin{equation*}
\begin{aligned}
	\text{(I)}\ G_k(\lambda)&=\lambda^2-kBC\lambda+(B+C)^2,\\ \text{(II)}\ H_k(\lambda)&=\lambda ^2+(2C-kAC) \lambda+A^2+C^2. 
\end{aligned}
\end{equation*}
%\begin{lemma}
%	Let $k\in \mbN$ and the Diophantine equation be as follows:
%	\begin{align*}
%	X^2+Y^2+Z^2+2YZ=kXYZ. 
%	\end{align*}
%	If it has positive integer solutions, then there exists a positive integer solution $(a,b,c)$ such that one of $G_k(b)$, $G_k(c)$ and $H_k(a)\geq 0$.
%\end{lemma}
%\begin{proof}
%\textcolor{red}{Is it necessary? whether we can delete it? I am not sure}
%\end{proof}

\begin{theorem}\label{thm: main}
	For any $1\leq k\leq 5$, all the positive integer solutions to the Diophantine equation \begin{align}x^2+y^2+z^2+2yz=kxyz\label{equ: CH}\end{align} can be obtained by some initial solution by finite generalized cluster mutations $\widehat{\mu}_i$ in $\widehat{\Gamma}=\langle \widehat{\mu}_1, \widehat{\mu}_2, \widehat{\mu}_3\rangle$. More precisely,
	\begin{enumerate}
		\item For $k=1$, there are exactly four orbits of solutions: $\widehat{\Gamma}[(9,6,3)]$, $\widehat{\Gamma}[(9,3,6)]$, $\widehat{\Gamma}[(8,4,4)]$ and $\widehat{\Gamma}[(5,5,5)]$;
		\item For $k=2$, there is only one orbit of solutions: $\widehat{\Gamma}[(4,2,2)]$;
		\item For $k=3$, there are exactly two orbits of solutions: $\widehat{\Gamma}[(3,2,1)]$ and $\widehat{\Gamma}[(3,1,2)]$; 
		\item For $k=4$, there is only one orbit of solutions: $\widehat{\Gamma}[(2,1,1)]$;
		\item For $k=5$, there is only one orbit of solutions: $\widehat{\Gamma}[(1,1,1)]$.
	\end{enumerate}
\end{theorem}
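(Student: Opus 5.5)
We follow the descent of \Cref{prop: k larger than 5}. Given a positive integer solution $(A,B,C)$ of \eqref{equ: CH}, we mutate in the direction of the maximal component whenever this strictly decreases the maximum. By the integrality remark (\Cref{lem: integrality}, in its generalized form for $\widehat{\mu}_i$), every $\widehat{\mu}_i$ sends positive integer solutions to positive integer solutions, and $\widehat{\Gamma}$ preserves $\widehat\mcT$. So the descent stays inside the solution set and must terminate at a \emph{minimal} solution, i.e.\ one where no mutation in a maximal direction lowers the maximum. The theorem then reduces to two tasks: listing the minimal solutions for each $k\le5$, and deciding which minimal solutions lie in the same orbit.

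\textbf{Characterizing minimal solutions.} We split into the same three cases as in \Cref{prop: k larger than 5}, using the discriminant polynomials. If $A$ is maximal, the other root of $G_k$ is $(B+C)^2/A$. Failure of descent means $(B+C)^2\ge A^2$, i.e.\ $A\le B+C$. If $B$ is maximal, the other root of $H_k$ is $(A^2+C^2)/B$, and failure means $A^2+C^2\ge B^2$; the case where $C$ is maximal is symmetric. Combined with the equation, these inequalities bound the minimal solutions.
\begin{itemize}
\item When $A$ is maximal, $A\le B+C$ gives $kABC=A^2+(B+C)^2\le 2(B+C)^2$. Together with $A\ge B,C$, this bounds $BC$ by a constant depending on $k$, for instance via $kBC\cdot \max(B,C)\le kABC\le 8\max(B,C)^2$.
\item When $B$ is maximal, $B^2\le A^2+C^2\le 2\max(A,C)^2$. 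Plugging this into the equation bounds $A$ and $C$ similarly.
\end{itemize}
This yields a finite search. For each $k\in\{1,\dots,5\}$ we solve the equation as a quadratic in one variable with the other two bounded, and list all minimal solutions. We expect to find, for example, $(5,5,5)$, $(8,4,4)$, $(9,6,3)$ and $(9,3,6)$ for $k=1$, and $(3,2,1)$ and $(3,1,2)$ for $k=3$, possibly together with a few further minimal triples that are boundary or equality cases.

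\textbf{Identifying orbits.} Any extra minimal triples are cases where the alternative root equals the current maximum, so the descent is ambiguous. We will handle them by exhibiting explicit short mutation sequences connecting them to the listed representatives, for instance $(2,1,1)$ and $(2,1,5)$ for $k=4$. To show that the listed orbits for $k=1,3$ are pairwise distinct, we need an invariant of $\widehat{\Gamma}$ on solutions. The natural candidate is $\gcd$-type data; note that $(5,5,5)$ and $(8,4,4)$ have different common divisors. We then check that each $\widehat{\mu}_i$ preserves the relevant quantity, such as $\gcd(A,B,C)$ or divisibility of $B$ and $C$ by $3$, and the residues of $(B,C)$ modulo $3$, which we use to separate $(9,6,3)$ from $(9,3,6)$. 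An alternative is to prove that the descent tree is canonical: every solution has a unique descending path, because the non-increasing directions are forced. Then the terminal minimal solution is an orbit invariant, provided we fix a convention at ties.

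\textbf{Main obstacle.} We expect the hardest step to be the separation of orbits for $k=1$ and $k=3$, in particular $(9,6,3)$ versus $(9,3,6)$ and $(3,2,1)$ versus $(3,1,2)$. These pairs are swapped by the symmetry $y\leftrightarrow z$, and that symmetry is not in $\widehat{\Gamma}$. Our first attempt will be the uniqueness-of-descent argument: show that at a non-minimal solution exactly one direction decreases the maximum. This makes the reduction map to minimal solutions well defined and $\widehat{\Gamma}$-invariant, because each mutation either is the descent step or is undone by it. If that fails at tie cases, we fall back on a congruence invariant modulo $3$ on the ordered pair $(y,z)$.
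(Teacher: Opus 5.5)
\textbf{Overall.} Your strategy is sound and goes beyond the paper's proof in two respects.

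First, the descent criterion. The paper certifies a descent step only when a discriminant polynomial is negative at the second-largest entry ($G_k(B)<0$, $H_k(A)<0$, $H_k(C)<0$), i.e.\ when the other Vieta root drops below the second-largest component. That condition is only sufficient, so the paper must carry non-minimal exceptional triples such as $(20,5,5)$, $(5,10,5)$, $(5,5,10)$ and treat them by hand; it also imports $k=5$ from Gyoda--Matsushita. Your criterion (no descent iff $A\le B+C$, resp.\ $B^2\le A^2+C^2$) is exact, so the terminal set is precisely the set of minimal solutions. Carrying out the search returns exactly the listed representatives for every $1\le k\le 5$. There are no extra minimal triples; for instance $(2,1,5)$ is not minimal, since $\widehat\mu_3(2,1,5)=(2,1,1)$.

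Second, and more importantly, distinctness of orbits. The paper's proof only shows that every solution lies in one of the listed orbits; it never shows these orbits are pairwise distinct. Your uniqueness-of-descent argument does prove ``exactly'', and it does not break at ties:
\begin{itemize}
\item If the maximum is attained twice, every $\widehat\mu_i$ strictly increases it.
\item At a strict maximum in position $i$, each $\widehat\mu_j$ with $j\ne i$ strictly increases it.
\item $\widehat\mu_i$ itself either decreases the maximum, increases it, or fixes the triple (e.g.\ $\widehat\mu_1(9,6,3)=(9,6,3)$).
\end{itemize}
Hence every non-loop mutation edge between solutions is the descent edge of one of its endpoints. Each orbit is a tree rooted at a single minimal solution, so distinct minimal solutions lie in distinct orbits. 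In particular, your fallback of connecting ``extra minimal triples'' by short mutation sequences can never occur and should be dropped.

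\textbf{Two steps need repair.}

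The bound is misstated. The inequality $kBC\max(B,C)\le 8\max(B,C)^2$ yields only $k\min(B,C)\le 8$, not a bound on $BC$. The $B$-maximal case likewise gives only $k\min(A,C)\le 6$. So as written the search is not yet finite. Bound the larger entry via Vieta. For $A\ge B\ge C$ with $A\le B+C$, the other root $A'=(B+C)^2/A$ satisfies $B+C\le A'\le (B+C)^2/B$. Together with $A+A'=kBC$ and $AA'=(B+C)^2$, this gives $kBC\ge 2(B+C)$, hence $kC\ge 3$, and $(kC-2)B\le 3C+C^2/B$, hence $B\le 4C$. The two $B$-maximal subcases are handled the same way, using $B+B'=C(kA-2)$ and $BB'=A^2+C^2$.

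The mod-$3$ invariant is misapplied for $k=1$. Residues of $(B,C)$ modulo $3$ do not separate $(9,6,3)$ from $(9,3,6)$, since both reduce to $(0,0)$.
\begin{itemize}
\item For $k=3$ the invariant works. Modulo $3$ one has $\widehat\mu_1\colon A\mapsto -A$, $\widehat\mu_2\colon B\mapsto C-B$ and $\widehat\mu_3\colon C\mapsto B-C$. These fix $(B,C)\equiv(2,1)$ and $(B,C)\equiv(1,2)$ pointwise.
\item For $k=1$ you must first divide by the invariant $\gcd(A,B,C)=3$. This is legitimate because the mutations are homogeneous of degree one and so commute with scaling. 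Alternatively, work modulo $9$.
\end{itemize}
In any case, the forest argument already settles distinctness by itself.
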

\begin{proof} We distinguish five cases according to the value of $k$. The case $k=5$ follows directly from \cite[Theorem 1]{GM23}. In the following, we discuss the cases that $1\leq k\leq 4$.
\begin{enumerate}
\item When $k=1$, we assume that $(A,B,C)$ is a solution distinct from $$(9,6,3), (9,3,6), (8,4,4),(5,5,5),(20,5,5),(5,5,10),(5,10,5).$$ By symmetry, we may assume that $B\geq C$. \textbf{We claim that after mutating a solution in the direction where the component is maximal, we can obtain a new solution whose maximal component is strictly decreasing}.\\
$(1.1)$ Assume that $A \geq B \geq C$ and we mutate at $\widehat{\mu}_1$. If $G_1(B)<0$, we obtain that $\max(\widehat{\mu}_1(A,B,C))=B< A=\max(A,B,C)$ and the claim holds. If $G_1(B) \geq 0$, we have 
\begin{align*}
5B^2 - CB^2 \geq 2B^2 + C^2 + 2BC - B^2C \geq 0,
\end{align*}
from which we conclude that $C \leq 5$. On the other hand, from the equation $ABC = A^2 + (B + C)^2$ and the inequality $A^2 + (B + C)^2 \geq 2A(B + C)$, we deduce that $C \geq 3$. Therefore, we obtain that $
3 \leq C \leq 5.$
If $C=5$, $G_1(B)\geq 0$ implies $B=5$, and then $(A,B,C)=(5,5,5)$ or $(20,5,5)$, contradicting our assumption. If $C=4$, then $G_1(B)\geq 0$ implies $4\leq B\leq 5$. For $B=5$, the equation $A^2-20A+81=0$ has no integer solution, while for $B=4$ we have $(A,B,C)=(8,4,4)$, again contradicting our assumption. If $C=3$, then $G_1(B)\geq 0$ implies $B\leq 7$. For $B\leq 5$, the equation $A^2-3AB+(B+3)^2=0$ has no positive integer solutions since $\Delta=5B^2-24B-36<0$. Hence $6\leq B\leq 7$. For $B=7$, the equation $A^2-21A+100=0$ has no integer solution, while for $B=6$ we obtain $(A,B,C)=(9,6,3)$, contradicting our assumption.
Hence, the claim holds for such case $(1.1)$.\\
$(1.2)$ Assume that $B \geq A \geq C$ and we mutate at $\widehat{\mu}_2$. If $H_1(A)<0$, we obtain that $\max(\widehat{\mu}_2(A,B,C))=A< B=\max(A,B,C)$ and the claim holds. If $H_1(A) \geq 0$, we have \begin{align*}
5A^2 - CA^2 \geq 2A^2 + C^2 + 2AC - A^2C \geq 0,
\end{align*}
which implies that $C \leq 5$. Note that by the equation $ABC = A^2 + (B + C)^2$ and the inequality $A^2 + (B + C)^2 \geq 2A(B + C)$, we deduce that $C \geq 3$. Hence, we have $
3 \leq C \leq 5.$ If $C=5$, then $H_1(A)\geq 0$ implies $A=5$, and by a direct calculation $(A,B,C)=(5,5,5)$ or $(5,10,5)$, contradicting our assumption. If $C=4$, then $H_1(A)\geq 0$ implies $4\leq A\leq 5$. For $A=4$ the equation $B^2-8B+32=0$ has no positive integer solution, and for $A=5$ the equation $B^2-12B+41=0$ has no positive integer solution. If $C=3$, then $H_1(A)\geq 0$ implies $A\leq 7$, but the equation $B^2+(6-3A)B+A^2+9=0$ has no positive integer solution since $\Delta=A(5A-36)<0$.
Hence, the claim holds for such case $(1.2)$.\\
$(1.3)$ Assume that $B \geq C \geq A$ and we mutate at $\widehat{\mu}_2$. If $H_1(C)<0$, we obtain that  $\max(\widehat{\mu}_2(A,B,C))=C< B=\max(A,B,C)$ and the claim holds. If $H_1(C) \geq 0$, we have
\begin{align*}
5C^2 - AC^2 \geq A^2 + 4C^2 - AC^2 \geq 0,
\end{align*}
which implies that $A \leq 5$. On the other hand, if $A \leq 4$, then we have \begin{align*}
	0=H_1(B)=B^2+(2-A)BC+C^2+A^2>0,
\end{align*} which is a contradiction. Now, we only need to consider $A = 5$. Note that 
$
H_1(C) = -C^2 + 25\geq 0,
$
which implies $C = 5$. Hence, from $H_1(B) = 0$, we conclude that $B = 5$, leading to the solution $(A,B,C) = (5,5,5)$. However, this contradicts our initial assumption.
Hence, the claim holds for such case $(1.3)$.

\item When $k=2$, we assume that $(A,B,C)$ is a solution distinct from $(4,2,2)$. By symmetry, we also assume $B\geq C$. \textbf{We claim that after mutating a solution in the direction where the component is maximal, we can obtain a new solution whose maximal component is strictly decreasing.}\\
$(2.1)$ Assume that $A \geq B \geq C$ and we mutate at $\widehat{\mu}_1$. If $G_2(B)<0$, then we obtain that $\max(\widehat{\mu}_1(A,B,C))=B<A$ and the claim holds. If $G_2(B)\geq 0$, then
\begin{align*}
5B^2-2CB^2\geq 2B^2+C^2+2BC-2B^2C\geq 0,
\end{align*}
which implies that $C\leq 2$. If $C=2$, then
$
G_2(B)=-2B^2+4B+4\geq 0,
$
and hence $B=2$. By substituting $B=C=2$ into the equation, we have $(A,B,C)=(4,2,2)$, contradicting our assumption. If $C=1$, then the equation, regarded as a quadratic equation in $A$, has discriminant
$
\Delta=(2B)^2-4(B+1)^2<0,
$
which is impossible. Hence, the claim holds for such case $(2.1)$.\\
$(2.2)$ Assume that $B\geq A\geq C$ and we mutate at $\widehat{\mu}_2$. If $H_2(A)<0$, then we obtain that $\max(\widehat{\mu}_2(A,B,C))=A<B$ and the claim holds. If $H_2(A)\geq 0$, then
\begin{align*}
5A^2-2CA^2\geq 2A^2+C^2+2AC-2A^2C\geq 0,
\end{align*}
which implies $C\leq 2$. If $C=2$, then we have 
$
H_2(A)=-2A^2+4A+4\geq 0,
$
and hence $A=2$. However, the equation for $B$ becomes $B^2-4B+8=0$, which has no integer solution. If $C=1$, then the equation for $B$ has discriminant
$
\Delta=(2-2A)^2-4(A^2+1)=-8A<0,
$
which is impossible. Hence, the claim holds for such case $(2.2)$.\\
$(2.3)$ Assume that $B\geq C\geq A$ and we mutate at $\widehat{\mu}_2$. If $H_2(C)<0$, then we obtain that $\max(\widehat{\mu}_2(A,B,C))=C<B$ and the claim holds. If $H_2(C)\geq 0$, then
\begin{align*}
5C^2-2AC^2\geq A^2+4C^2-2AC^2\geq 0,
\end{align*}
which implies $A\leq 2$. If $A=2$, then
$
H_2(B)=(B-C)^2+4>0,
$
which contradicts $H_2(B)=0$. If $A=1$, then
$
H_2(B)=B^2+C^2+1>0,
$
which is also impossible. Hence, the claim holds for such case $(2.3)$.

\item When $k=3$, we assume that $(A,B,C)$ is a solution distinct from $(3,2,1)$ and $(3,1,2)$. By symmetry, we also assume $B\geq C$. \textbf{We claim that after mutating a solution in the direction where the component is maximal, we can obtain a new solution whose maximal component is strictly decreasing.}\\
$(3.1)$ Assume that $A\geq B\geq C$ and we mutate at $\widehat{\mu}_1$. If $G_3(B)<0$, then we obtain that $\max(\widehat{\mu}_1(A,B,C))=B<A$ and the claim holds. If $G_3(B)\geq 0$, then
\begin{align*}
5B^2-3CB^2\geq 2B^2+C^2+2BC-3B^2C\geq 0,
\end{align*}
which implies $C=1$. Thus, we have 
$
G_3(B)=-B^2+2B+1\geq 0,
$
and hence $B=1$ or $B=2$. If $B=1$, the equation becomes $A^2-3A+4=0$, which has no integer solution. If $B=2$, then we obtain $(A,B,C)=(3,2,1)$, contradicting our assumption. Hence, the claim holds for such case $(3.1)$.\\
$(3.2)$ Assume that $B\geq A\geq C$ and we mutate at $\widehat{\mu}_2$. If $H_3(A)<0$, then we obtain that $\max(\widehat{\mu}_2(A,B,C))=A<B$ and the claim holds. If $H_3(A)\geq 0$, then
\begin{align*}
5A^2-3CA^2\geq 2A^2+C^2+2AC-3A^2C\geq 0,
\end{align*}
which implies $C=1$. Thus, we obtain that 
$
H_3(A)=-A^2+2A+1\geq 0,
$
and hence $A=1$ or $A=2$. If $A=1$, the equation for $B$ is $B^2-B+2=0$ and if $A=2$, it is $B^2-4B+5=0$. Both equations have no integer solution. Hence, the claim holds for such case $(3.2)$.\\
$(3.3)$ Assume that $B\geq C\geq A$ and we mutate at $\widehat{\mu}_2$. If $H_3(C)<0$, then we obtain that $\max(\widehat{\mu}_2(A,B,C))=C<B$ and the claim holds. If $H_3(C)\geq 0$, then
\begin{align*}
5C^2-3AC^2\geq A^2+4C^2-3AC^2\geq 0,
\end{align*}
which implies $A=1$. In this case, we have 
$
H_3(B)=B^2-BC+C^2+1>0,
$
since $B\geq C$. This contradicts $H_3(B)=0$. Hence, the claim holds for such case $(3.3)$.

\item When $k=4$, we assume that $(A,B,C)$ is a solution distinct from $(2,1,1)$. By symmetry, we also assume $B\geq C$. \textbf{We claim that after mutating a solution in the direction where the component is maximal, we can obtain a new solution whose maximal component is strictly decreasing.}\\
$(4.1)$ Assume that $A\geq B\geq C$ and we mutate at $\widehat{\mu}_1$. If $G_4(B)<0$, then we obtain that $\max(\widehat{\mu}_1(A,B,C))=B<A$ and the claim holds. If $G_4(B)\geq 0$, then
\begin{align*}
5B^2-4CB^2\geq 2B^2+C^2+2BC-4B^2C\geq 0,
\end{align*}
which implies $C=1$. Thus, we have 
$
G_4(B)=-2B^2+2B+1\geq 0,
$
and hence $B=1$. By substituting $B=C=1$ into the equation, we have $(A,B,C)=(2,1,1)$, contradicting our assumption. Hence, the claim holds for such case $(4.1)$.\\
$(4.2)$ Assume that $B\geq A\geq C$ and we mutate at $\widehat{\mu}_2$. If $H_4(A)<0$, then we obtain that $\max(\widehat{\mu}_2(A,B,C))=A<B$ and the claim holds. If $H_4(A)\geq 0$, then
\begin{align*}
5A^2-4CA^2\geq 2A^2+C^2+2AC-4A^2C\geq 0,
\end{align*}
which implies $C=1$. Thus, we have 
$
H_4(A)=-2A^2+2A+1\geq 0,
$
and hence $A=1$. The equation for $B$ becomes $B^2-2B+2=0$, which has no integer solution. Hence, the claim holds for such case $(4.2)$.\\
$(4.3)$ Assume that $B\geq C\geq A$ and we mutate at $\widehat{\mu}_2$. If $H_4(C)<0$, then we obtain that $\max(\widehat{\mu}_2(A,B,C))=C<B$ and the claim holds. If $H_4(C)\geq 0$, then
\begin{align*}
5C^2-4AC^2\geq A^2+4C^2-4AC^2\geq 0,
\end{align*}
which implies $A=1$. Then, we obtain that 
$
H_4(B)=(B-C)^2+1>0,
$
contradicting $H_4(B)=0$. Hence, the claim holds for such case $(4.3)$. \end{enumerate}
\par 
In conclusion, starting from any positive integer solution which is not one of the listed initial solutions or the finitely many exceptional neighboring solutions appearing above, repeated mutation in a maximal direction strictly decreases the maximal component. This process must terminate. The terminal possibilities are precisely the initial solutions listed in the statement, while the exceptional neighboring solutions already belong to the corresponding initial orbits. For instance,
\begin{align*}
(20,5,5)=\widehat{\mu}_1(5,5,5),\ (5,10,5)=\widehat{\mu}_2(5,5,5),\ (5,5,10)=\widehat{\mu}_3(5,5,5).
\end{align*}
Therefore, every positive integer solution lies in one of the stated $\widehat{\Gamma}$-orbits.
\end{proof}
\begin{proposition}\label{prop: scaling correspondence}
	For the Diophantine equation
	\begin{align}x^2+y^2+z^2+2yz=kxyz\label{eq: one to one}\end{align} with $1\leq k\leq 5$, the following statements about one-to-one correspondence hold:
	\begin{enumerate}
		\item For $k=1$, $(A_1,A_2,A_3)$ is a solution in $\widehat{\Gamma}[(9,6,3)]$ (or $\widehat{\Gamma}[(9,3,6)]$) if and only if $3$ divides $A_i\ (i=1,2,3)$ and $(\frac{A_1}{3},\frac{A_2}{3},\frac{A_3}{3})$ is a solution in $\widehat{\Gamma}[(3,2,1)]$ (or $\widehat{\Gamma}[(3,1,2)]$) for $k=3$.
		\item For $k=1$, $(A_1,A_2,A_3)$ is a solution in $\widehat{\Gamma}[(8,4,4)]$ if and only if $2$ divides $A_i\ (i=1,2,3)$ and $(\frac{A_1}{2},\frac{A_2}{2},\frac{A_3}{2})$ is a solution for $k=2$, if and only if $4$ divides $A_i\ (i=1,2,3)$ and $(\frac{A_1}{4},\frac{A_2}{4},\frac{A_3}{4})$ is a solution for $k=4$.
		\item For $k=1$, $(A_1,A_2,A_3)$ is a solution in $\widehat{\Gamma}[(5,5,5)]$ if and only if $5$ divides $A_i\ (i=1,2,3)$ and $(\frac{A_1}{5},\frac{A_2}{5},\frac{A_3}{5})$ is a solution for $k=5$.
	\end{enumerate}
\end{proposition}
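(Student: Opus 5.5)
The plan is to use the fact that the generalized mutations \eqref{eq: generalized mutation rules} are homogeneous of degree one, and hence commute with scaling. For $d\in\mbQ_+$ let $D_d(x,y,z)=(dx,dy,dz)$. Each $\widehat{\mu}_i$ has the form $x\mapsto (y+z)^2/x$ or $y\mapsto (x^2+z^2)/y$, so $\widehat{\mu}_i\circ D_d=D_d\circ\widehat{\mu}_i$ for $i=1,2,3$. Therefore $D_d(\widehat{\Gamma}[p])=\widehat{\Gamma}[D_d(p)]$ for every $p\in\mbQ_+^3$.

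On the equation side, substituting $(x,y,z)=(dx',dy',dz')$ into \eqref{eq: one to one} with $k=1$ turns $x^2+y^2+z^2+2yz=xyz$ into $d^2(x'^2+y'^2+z'^2+2y'z')=d^3x'y'z'$, which is the equation with $k=d$. So $D_d$ is a bijection between the positive rational solutions of the $k=d$ equation and those of the $k=1$ equation, and it maps positive integer solutions to positive integer solutions.

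Next I match base points:
\[
D_3(3,2,1)=(9,6,3),\quad D_3(3,1,2)=(9,3,6),\quad D_2(4,2,2)=(8,4,4),\quad D_4(2,1,1)=(8,4,4),\quad D_5(1,1,1)=(5,5,5).
\]
Combined with commutation, this gives $\widehat{\Gamma}[(9,6,3)]=D_3(\widehat{\Gamma}[(3,2,1)])$, and similarly for the other base points.

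Each statement then follows in both directions. For the ``only if'' direction in (1): if $(A_1,A_2,A_3)\in\widehat{\Gamma}[(9,6,3)]$, it equals $D_3(q)$ for some $q\in\widehat{\Gamma}[(3,2,1)]$. By \Cref{thm: main}, or simply because the integrality of Remark~\ref{lem: integrality} persists along the orbit, $q$ is a positive integer solution for $k=3$. Hence $3\mid A_i$ and $A_i/3$ are the entries of $q$. For the ``if'' direction: if $3\mid A_i$ and $(A_i/3)\in\widehat{\Gamma}[(3,2,1)]$, applying $D_3$ puts $(A_i)$ in $\widehat{\Gamma}[(9,6,3)]$. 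Statement (3) is proved the same way with $d=5$.

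For (2), both equivalences run through $D_2$ and $D_4$ together with the single-orbit statements of \Cref{thm: main} for $k=2,4$. For $k=2$: if $2\mid A_i$ and $(A_i/2)$ is a solution for $k=2$, then it lies in $\widehat{\Gamma}[(4,2,2)]$, so $(A_i)\in D_2(\widehat{\Gamma}[(4,2,2)])=\widehat{\Gamma}[(8,4,4)]$. Conversely, an element of $\widehat{\Gamma}[(8,4,4)]$ is $D_2$ of an element of $\widehat{\Gamma}[(4,2,2)]$, which is an integer solution for $k=2$. The $k=4$ case is identical using $D_4$ and $\widehat{\Gamma}[(2,1,1)]$.

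The only subtle point is that membership in an orbit is taken inside $\mbQ_+^3$, where the $\widehat{\mu}_i$ are defined. I will state explicitly that the orbits of integer base points consist of positive integer solutions, using Remark~\ref{lem: integrality} adapted to $\widehat{\mathcal T}$, so that no denominators appear. In (1), for the ``if'' direction, \Cref{thm: main} is not needed; the hypothesis already places the scaled point in the correct orbit.
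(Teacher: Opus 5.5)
Your proof is correct and follows essentially the same route as the paper. Multiplying a solution for $k$ by $k$ gives a solution for $k=1$, and the generalized mutations are compatible with this scaling: the paper phrases this as ``divisibility is preserved under mutations by induction,'' your identity $\widehat{\mu}_i\circ D_d=D_d\circ\widehat{\mu}_i$ (homogeneity of degree one) states it more precisely, and your use of the single-orbit cases of \Cref{thm: main} for $k=2,4,5$ spells out what the paper leaves as ``similar statements apply.''
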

\begin{proof}
	If $(A_1,A_2,A_3)$ is a solution to \eqref{eq: one to one}, then we have
	\begin{align*}
		(kA_1)^2+(kA_2)^2+(kA_3)^2+2(kA_2)(kA_3)=k^3A_1A_2A_3=(kA_1)(kA_2)(kA_3), 
	\end{align*} which implies that $(kA_1,kA_2,kA_3)$ is a solution to  \begin{align*}x^2+y^2+z^2+2yz=xyz.\end{align*} On the other hand, suppose that $(B_1,B_2,B_3)$ is a solution for $k=1$. Without loss of generality, we might assume that $(B_1,B_2,B_3)\in \widehat{\Gamma}[(9,6,3)]$. Note that by \eqref{eq: generalized mutation rules} and induction, the divisibility condition is preserved under mutations. Hence, we have $3\ |\ B_i$ for $i=1,2,3$ and $(\frac{B_1}{3},\frac{B_2}{3},\frac{B_3}{3})$ is a solution in $\widehat{\Gamma}[(3,2,1)]$ for $k=3$. Similar statements apply to the other cases.
\end{proof}
This proposition is analogous to a result given by Hurwitz \cite{Hur07} for the classical Markov equation; see also \cite[Proposition 2.2]{Aig13}.

In conclusion, based on \Cref{prop: k larger than 5} and \Cref{thm: main}, we have the following theorem.
\begin{theorem}\label{thm: 12345}
	There exist positive integer solutions to the Diophantine equations \begin{align*}x^2+y^2+z^2+2yz=kxyz\end{align*} if and only if $k=1,2,3,4,5$. Moreover, all the positive integer solutions lie in some $\widehat{\Gamma}$-orbit of solutions. 
\end{theorem}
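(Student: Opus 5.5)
This theorem combines two earlier results, so the plan is to assemble them. The argument splits into the "only if" direction, the "if" direction, and the orbit statement.

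For the \emph{only if} direction, invoke \Cref{prop: k larger than 5}. It shows that for $k\geq 6$ no positive integer solution exists. The case $k=0$ is excluded separately, since $x^2+y^2+z^2+2yz>0$ for positive integers. Hence any $k\in\mbN$ admitting a positive solution satisfies $1\leq k\leq 5$.

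For the \emph{if} direction, exhibit an explicit solution for each $k=1,\dots,5$ and check it by direct substitution. The representatives from \Cref{thm: main} serve:
\begin{align*}
(5,5,5) \text{ for } k=1,\quad (4,2,2) \text{ for } k=2,\quad (3,2,1) \text{ for } k=3,\quad (2,1,1) \text{ for } k=4,\quad (1,1,1) \text{ for } k=5.
\end{align*}
For example, $16+4+4+8=32=2\cdot 16$ confirms the case $k=2$. Alternatively, start from $(1,1,1)$ for $k=5$ and use the scaling argument in the proof of \Cref{prop: scaling correspondence}: $(kA_1,kA_2,kA_3)$ solves the $k=1$ equation. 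This gives $(5,5,5)$ for $k=1$, and the remaining cases are checked similarly.

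For the orbit statement, note that the maps $\widehat{\mu}_i$ preserve $\widehat\mcT$ and preserve positive integrality, by the analogue of \Cref{lem: integrality}. Therefore $\widehat{\Gamma}$ acts on the set of positive integer solutions for each fixed $k$, and every solution trivially lies in its own orbit. The substantive content is that every solution lies in one of the finitely many explicit orbits, and this is exactly \Cref{thm: main} for $1\leq k\leq 5$. I expect no real obstacle here, since the descent argument has already been carried out. The only point needing care is the integrality of mutated solutions: $x'=(y+z)^2/x=kyz-x\in\mbZ$, and positivity holds because the product of the two roots is positive. This ensures the orbits stay inside the positive integer solution set.
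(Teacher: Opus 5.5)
Your proposal is correct and matches the paper's argument: the paper obtains this theorem by combining \Cref{prop: k larger than 5} (no solutions for $k\geq 6$) with \Cref{thm: main} (explicit orbit representatives for $1\leq k\leq 5$). Your separate treatment of $k=0$ and your direct checks of the representatives are small additions, since the paper leaves the case $k=0$ implicit.
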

\begin{remark}
In \Cref{table: markov-type equation}, the first well-known Markov-type equation for each $k$ \cite{Mar80,Hur07}, with unique orbit of solutions, has a classical cluster algebra structure that may be regarded as a degeneration of a generalized cluster algebra. The second Markov-type equation \cite{Lam16, CL25}, with only one orbit of solutions, has a non-degenerate generalized cluster algebra structure. The third new Markov-type equation not only has a non-degenerate generalized cluster algebra structure but also has four orbits of solutions for $k=1$ and two orbits of solutions for $k=3$, thereby providing the multiple-orbit phenomenon and answering the second question of \Cref{question}.
\end{remark}

\begin{remark}
	In fact, the Diophantine equation \eqref{equ: CH} can be viewed as the one with four variables, that is $x^2+y^2+z^2+2yz=xyzw$. Hence, to some degree, \Cref{thm: 12345} provides a special method to deal with such Diophantine equation.
\end{remark}

\begin{table}[htbp] 
\centering

\renewcommand{\arraystretch}{1.3}
\setlength{\tabcolsep}{2pt}

\resizebox{0.9\textwidth}{!}{
\begin{tabular}{
|>{\centering\arraybackslash}m{6cm}
|>{\centering\arraybackslash}m{6cm}
|>{\centering\arraybackslash}m{6cm}|
}
\hline

$B;R$
&
Markov-type equations
&
Number of $\widehat{\Gamma}$-orbits of solutions $\mathcal{N}$
\\
\hline

$\begin{pmatrix}
0 & -2 & 2\\
2 & 0 & -2\\
-2 & 2 & 0
\end{pmatrix}$;
$\begin{pmatrix}
1 & 0 & 0\\
0 & 1 & 0\\
0 & 0 & 1
\end{pmatrix}$
& \vspace{17pt}
$\displaystyle x^2+y^2+z^2=kxyz$
& \vspace{17pt}
$k=1,3:\ \mathcal{N}=1$
\\
\hline

$\begin{pmatrix}
0 & -1 & 1\\
2 & 0 & -1\\
-2 & 1 & 0
\end{pmatrix}$;
$\begin{pmatrix}
1 & 0 & 0\\
0 & 2 & 0\\
0 & 0 & 2
\end{pmatrix}$
& \vspace{17pt}
$\displaystyle
x^2+y^2+z^2+2xy+2xz=kxyz$
& \vspace{17pt}
$k=7:\ \mathcal{N}=1$
\\
\hline

$\begin{pmatrix}
0 & -2 & 2\\
1 & 0 & -2\\
-1 & 2 & 0
\end{pmatrix}$;
$\begin{pmatrix}
2 & 0 & 0\\
0 & 1 & 0\\
0 & 0 & 1
\end{pmatrix}$
& \vspace{17pt}
$\displaystyle x^2+y^2+z^2+2yz=kxyz$
&
\begin{tabular}{rcl}
$k$ & $=$ & $1:\ \mathcal{N}=4$\\
$k$ & $=$ & $3:\ \mathcal{N}=2$\\
$k$ & $=$ & $2,4,5:\ \mathcal{N}=1$
\end{tabular}
\\
\hline

\end{tabular}
}

\vspace{1em}
\caption{Fundamental Markov-type equations with a generalized cluster structure}
\label{table: markov-type equation}
\end{table}

\begin{proposition}\label{prop: no square k34}
    	For $k=3$ and $4$, there is no positive integer solution to the Diophantine equation \begin{align}x^4+y^2+z^2+2yz=kx^2yz. \label{Cor equ}\end{align}
\end{proposition}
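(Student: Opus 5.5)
The plan is to reduce to the generalized equation through the map $S$. If $(a,b,c)$ is a positive integer solution of \eqref{Cor equ} for $k=3$ or $k=4$, then $S(a,b,c)=(a^2,b,c)$ is a positive integer solution of \eqref{equ: CH} with the same $k$. By \Cref{thm: main}, this point lies in $\widehat{\Gamma}[(3,2,1)]\cup\widehat{\Gamma}[(3,1,2)]$ for $k=3$, and in $\widehat{\Gamma}[(2,1,1)]$ for $k=4$. So it suffices to show that no solution in these orbits has a perfect square as its first component.

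The key step is to find an invariant of the orbit that obstructs squareness of the first coordinate. I would work modulo a suitable integer $m$ and track the residues of $(x,y,z)$ under $\widehat{\mu}_1,\widehat{\mu}_2,\widehat{\mu}_3$. Division is the only subtlety, so I would use the additive Vieta forms instead:
\[
x'=kyz-x,\qquad y'=kxz-2z-y,\qquad z'=kxy-2y-z.
\]
These make each mutation an affine-linear map on residues, and the orbit of the initial triple modulo $m$ becomes a finite computation.

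For $k=4$, the initial triple is $(2,1,1)$. The equation itself gives $x^2\equiv -(y+z)^2 \pmod{4yz}$, and I expect $x\equiv 2 \pmod 4$, or $x$ twice a non-square, to be preserved. I would first test $m=4$ and $m=8$ on the reachable residue set and check that every reachable $x$ is $\equiv 2,3 \pmod 4$ or similar, which is never a square. A cleaner route may be to scale down: if $x=a^2$, rewrite $a^4+(b+c)^2=4a^2bc$, so $(b+c)^2=a^2(4bc-a^2)$. Then $a\mid b+c$, and $4bc-a^2$ must be a square, say $d^2$, giving $4bc=a^2+d^2$. Combined with $b+c=ad$, this yields $(b-c)^2=a^2d^2-a^2-d^2$, which I would try to rule out by descent or a congruence.

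For $k=3$, the same manipulation gives $(b+c)^2=a^2(3bc-a^2)$, so $3bc-a^2=d^2$ and $b+c=ad$. Then $3bc=a^2+d^2$ forces $3\mid a$ and $3\mid d$, since $-1$ is not a square modulo $3$. Writing $a=3a_1$ and $d=3d_1$ gives $b+c=9a_1d_1$ and $bc=3(a_1^2+d_1^2)$. The quadratic $t^2-9a_1d_1t+3(a_1^2+d_1^2)$ must have integer roots, so $81a_1^2d_1^2-12(a_1^2+d_1^2)$ must be a square. Moreover $b+c\equiv 0 \pmod 3$ and $bc\equiv 0\pmod 3$ force $3\mid b$ and $3\mid c$, so $9\mid bc$, hence $3\mid a_1^2+d_1^2$, hence $3\mid a_1,d_1$. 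This gives an infinite $3$-adic descent, a contradiction. For $k=4$, the analogue with the prime $2$ should be similar: $4bc=a^2+d^2$ with $b+c=ad$, so I would look at $2$-adic valuations of $a$ and $d$.

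I expect the main obstacle to be the $k=4$ case, where the $2$-adic argument is less clean than the $3$-adic one. If the direct descent stalls, I will fall back on the orbit-residue computation via \Cref{thm: main}.
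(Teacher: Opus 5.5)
\textbf{The gap is the case $k=4$.} After reducing to $4bc=a^2+d^2$ and $b+c=ad$, you only list options: descent, a congruence, a $2$-adic analysis, an orbit-residue fallback. You carry none of them out, so this case is unproved.

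It closes in one line from the identity you already wrote. The relation $(b-c)^2=a^2d^2-a^2-d^2$ is the same as $(a^2-1)(d^2-1)=(b-c)^2+1$. The equation $4bc=a^2+d^2$ forces $a$ and $d$ to be even, since squares are $0,1 \bmod 4$. Hence $a^2-1\ge 3$ and $a^2-1\equiv 3\pmod 4$, so it has a prime factor $p\equiv 3\pmod 4$. Then $p\mid (b-c)^2+1$, which is impossible.

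Your fallback would also have worked once checked. On solutions of \eqref{equ: CH} with $k=4$, $\widehat\mu_1$ acts by $x\mapsto 4yz-x\equiv -x\pmod 4$, and $\widehat\mu_2,\widehat\mu_3$ fix $x$. So $x\equiv 2\pmod 4$ on all of $\widehat\Gamma[(2,1,1)]$, and such $x$ is never a square. Your other guess, that $x$ is twice a non-square, is false: $18=2\cdot 3^2$ and $578=2\cdot 17^2$ lie in that orbit.

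Your $k=3$ argument is correct and, unlike the paper's, does not need \Cref{thm: main}. However, the phrase ``infinite descent'' hides that the system is not self-similar. Dividing $a,d,b,c$ by $3$ gives $b'+c'=3a'd'$, not $b'+c'=a'd'$. What your two steps actually use, and what is inherited, is $3bc=a^2+d^2$ together with $ad\mid b+c$; stated that way, you get $3^n\mid a$ for all $n$. Alternatively, a one-step version: with $m=\min(v_3(a),v_3(d))$ one has $v_3(a^2+d^2)=2m$. So $v_3(b)+v_3(c)=2m-1$ is odd, and hence $v_3(b+c)=\min(v_3(b),v_3(c))\le m-1<2m\le v_3(ad)$.

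The idea you missed from the paper is \Cref{contract}. Since $\widehat\mu_i\circ S=S\circ\mu_i$, one has $\widehat\Gamma[S(a,b,c)]=S(\Gamma[(a,b,c)])$. So squareness of the first coordinate is itself the orbit invariant you were looking for. \Cref{thm: main} places $(3,2,1)$, $(3,1,2)$ or $(2,1,1)$ in this orbit, and $3$ and $2$ are not squares of rationals. This settles $k=3$ and $k=4$ at once, with no arithmetic specific to each $k$.
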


\begin{proof}
Note that if $(A,B,C)$ is a positive integer solution to the Diophantine equation \eqref{Cor equ}, then $S(A,B,C)=(A^2,B,C)$ is a solution to the Diophantine equation \begin{align}x^2+y^2+z^2+2yz=kxyz. \label{Cor equ2}\end{align} 
Hence, according to \Cref{thm: main}, there exist $i_1,i_2,\dots,i_n \in \{1, 2, 3\}$, such that $$(A_0,B_0,C_0)=\widehat{\mu}_{i_1} \circ \widehat{\mu}_{i_2} \circ \dots \circ \widehat{\mu}_{i_n} (S(A,B,C))$$ is the initial solution to the Diophantine equation \eqref{Cor equ2}. More precisely, we have 
\begin{align}\label{eq: k=3,4}
(A_0,B_0,C_0)=\left\{
		\begin{array}{ll}
			(3,2,1)\ \text{or}\ (3,1,2), &  \text{if}\ k=3, \\
			(2,1,1), &  \text{if}\ k=4. 
		\end{array} \right.
\end{align} Note that  $\widehat{\mu}_i(S(x,y,z))=S(\mu_i(x,y,z))$ by \Cref{contract}, we obtain that  \begin{align*}(A_0,B_0,C_0)=S (\mu_{i_1} \circ \mu_{i_2} \circ \dots \circ \mu_{i_n} (A,B,C)). \end{align*} Thus, the first component of the initial solution is a perfect square. However, by \eqref{eq: k=3,4}, when $k=3,4$, the first component of the initial solution is not a perfect square number. Therefore, there is no positive integer solution to the Diophantine equation \eqref{Cor equ}.
\end{proof}
Therefore, based on \Cref{thm: 12345} and \Cref{prop: no square k34}, we obtain the following theorem.
\begin{theorem}\label{thm: classical markov type}
Let $k\in \mbN$ and the Diophantine equation be as follows:\begin{align*}
		x^4+y^2+z^2+2yz=kx^2 yz. 
	\end{align*} Then, it has positive integer solutions if and only if $k=1,2,5$. Furthermore, all the solutions can be obtained by some initial solution by finite cluster mutations $\mu_i$ in $\Gamma=\langle \mu_1, \mu_2, \mu_3\rangle$. More precisely, 
	\begin{enumerate}
		\item For $k=1$, there are exactly two orbits of solutions: $\Gamma[(3,6,3)]$ and $\Gamma[(3,3,6)]$;
		\item For $k=2$, there is only one orbit of solutions: $\Gamma[(2,2,2)]$;
		\item For $k=5$, there is only one orbit of solutions: $\Gamma[(1,1,1)]$.
	\end{enumerate}
\end{theorem}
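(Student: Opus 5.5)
The plan is to transfer everything through the squaring map $S(x,y,z)=(x^2,y,z)$, exactly as in \Cref{prop: no square k34}. If $(A,B,C)$ is a positive integer solution of $x^4+y^2+z^2+2yz=kx^2yz$, then $S(A,B,C)$ is a positive integer solution of \eqref{equ: CH}. Hence \Cref{prop: k larger than 5} excludes $k\geq 6$, and \Cref{prop: no square k34} excludes $k=3,4$. The case $k=0$ is trivially impossible. For existence, I check directly that $(3,6,3)$ and $(3,3,6)$ solve the equation for $k=1$ ($81+36+9+36=162=9\cdot18$), that $(2,2,2)$ solves it for $k=2$, and that $(1,1,1)$ solves it for $k=5$. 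By \Cref{lem: integrality}, the orbits under $\Gamma$ consist of positive integers, and every element of each orbit is a solution because $\mcT$ is a Laurent mutation invariant.

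Completeness is the main step. Take a solution $(A,B,C)$ with $k\in\{1,2,5\}$. By \Cref{thm: main}, some word satisfies $\widehat{\mu}_{i_1}\cdots\widehat{\mu}_{i_n}(S(A,B,C))=(A_0,B_0,C_0)$, where the right-hand side is one of the initial solutions listed there. By \Cref{contract}, this equals $S(\mu_{i_1}\cdots\mu_{i_n}(A,B,C))$. Note that $\mu_1$ preserves positivity and rationality, and that the image under the $\mu$-word is again an integer solution by \Cref{lem: integrality}. Therefore $A_0$ must be a perfect square, and the element $(\sqrt{A_0},B_0,C_0)$ lies in $\Gamma[(A,B,C)]$, since each $\mu_i$ is an involution.

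For $k=1$, the initial solutions $(9,6,3)$ and $(9,3,6)$ have square first component, giving $(3,6,3)$ and $(3,3,6)$. The solutions $(8,4,4)$ and $(5,5,5)$ do not, so their $\widehat{\Gamma}$-orbits contain no element of the form $S(\cdot)$. For $k=2$, $(4,2,2)$ gives $(2,2,2)$, and for $k=5$, $(1,1,1)$ gives $(1,1,1)$. This shows every solution lies in one of the listed $\Gamma$-orbits.

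The obstacle I expect is proving that for $k=1$ the two orbits $\Gamma[(3,6,3)]$ and $\Gamma[(3,3,6)]$ are genuinely distinct, i.e.\ "exactly two". I plan to argue by contradiction. If they coincided, then applying $S$ and \Cref{contract} would put $(9,6,3)$ and $(9,3,6)$ in the same $\widehat{\Gamma}$-orbit, contradicting the statement in \Cref{thm: main} that these are distinct orbits. That distinctness is implicit in the descent argument there: each solution descends deterministically to a unique terminal solution. If I need an independent invariant, I would use the divisibility structure of \Cref{prop: scaling correspondence} together with the orbit separation for $k=3$.
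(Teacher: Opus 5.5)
Your proposal is correct and follows essentially the same route as the paper. Necessity comes from \Cref{prop: k larger than 5} and \Cref{prop: no square k34}. Completeness comes from applying $S$, descending with \Cref{thm: main}, and pulling the mutation word back through \Cref{contract}, so that only initial solutions with square first component survive, namely $(9,6,3)$, $(9,3,6)$, $(4,2,2)$, $(1,1,1)$.

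Your reduction of $\Gamma[(3,6,3)]\neq\Gamma[(3,3,6)]$ to the distinctness of $\widehat\Gamma[(9,6,3)]$ and $\widehat\Gamma[(9,3,6)]$ via $S$ is a valid step that the paper leaves implicit. Do not lean on your parenthetical justifications, though. The descent in the proof of \Cref{thm: main} only shows that every solution reaches some initial solution; distinctness would additionally need a unique-descent property, such as at most one $\widehat\mu_i$ strictly decreasing the mutated coordinate. Likewise, \Cref{prop: scaling correspondence} does not give an independent invariant: it just moves the question to $\widehat\Gamma[(3,2,1)]$ versus $\widehat\Gamma[(3,1,2)]$.
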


\begin{proof}
By \Cref{thm: 12345,prop: no square k34}, the equation has positive integer solutions if and only if $k=1,2,5$. Let $(A,B,C)$ be a positive integer solution. Then $S(A,B,C)=(A^2,B,C)$ is a positive integer solution to
\begin{align*}
	x^2+y^2+z^2+2yz=kxyz.
\end{align*}
By \Cref{thm: main}, it can be transformed by generalized cluster mutations into one of the initial solutions listed there. Moreover, \Cref{contract} shows that the image of $S$ is preserved under these mutations and that every such mutation sequence lifts to a sequence of classical cluster mutations.

For $k=1$, among the four initial solutions  $(9,6,3)$, $(9,3,6)$, $(8,4,4)$, and $(5,5,5)$, only $(9,6,3)$ and $(9,3,6)$ lie in the image of $S$. For $k=2$ and $k=5$, the corresponding initial solutions are $(4,2,2)$ and $(1,1,1)$, respectively. Taking square roots of their first components gives $(3,6,3)$, $(3,3,6)$, $(2,2,2)$, and $(1,1,1)$. Hence, the positive integer solutions form precisely the classical cluster mutation orbits $\Gamma[(3,6,3)]$, $\Gamma[(3,3,6)]$, $\Gamma[(2,2,2)]$ and $\Gamma[(1,1,1)]$.
\end{proof}
Hence, we can unify \Cref{thm: classical markov type}  with previous results of \cite{Hur07, Lam16, CL25} for the rank $3$ mutation-preserving classical cluster algebras into \Cref{tab: classical}.

\begin{table}[htbp]
\centering

\renewcommand{\arraystretch}{1.3}
\setlength{\tabcolsep}{2pt}

\resizebox{0.9\textwidth}{!}{
\begin{tabular}{
|>{\centering\arraybackslash}m{5cm}
|>{\centering\arraybackslash}m{6.5cm}
|>{\centering\arraybackslash}m{6cm}|
}
\hline

$B$
&
Markov-type equations
&
Number of $\Gamma$-orbits of solutions $\mathcal{N}$
\\
\hline

$\begin{pmatrix}
0 & -2 & 2\\
2 & 0 & -2\\
-2 & 2 & 0
\end{pmatrix}$
& \vspace{17pt}
$\displaystyle x^2+y^2+z^2=kxyz$
& \vspace{17pt}
$k=1,3:\ \mathcal{N}=1$
\\
\hline

$\begin{pmatrix}
0 & -1 & 1\\
4 & 0 & -2\\
-4 & 2 & 0
\end{pmatrix}$
& \vspace{17pt}
 $\displaystyle
x^2+y^4+z^4+2xy^2+2xz^2
=kxy^2z^2$
& \vspace{17pt}
$k=7:\ \mathcal{N}=1$
\\
\hline

$\begin{pmatrix}
0 & -4 & 4\\
1 & 0 & -2\\
-1 & 2 & 0
\end{pmatrix}$
& \vspace{17pt}
$\displaystyle x^4+y^2+z^2+2yz=kx^2yz$
& \vspace{10pt}
\begin{tabular}{rcl}
$k$ & $=$ & $1:\ \mathcal{N}=2$\\
$k$ & $=$ & $2,5:\ \mathcal{N}=1$
\end{tabular}
\\
\hline

\end{tabular}
}

\vspace{1em}
\caption{Fundamental Markov-type equations with a classical cluster structure}
\label{tab: classical}
\end{table}

%=======================================
\section{Generalized Markov Laurent mutation invariants}
\label{sec:laurent-classification}
In this section, we classify all generalized Markov (Laurent) mutation invariants, which correspond to the mutation-preserving generalized cluster algebra of type (I.4) in
Table~\ref{tab: rank-three-classification}.

Three equations in \Cref{table: markov-type equation} corresponding to $k=3,7,5$, respectively, all belong to a family of 
\emph{generalized Markov equations} studied in
\cite{GM23}. Namely,
\begin{align*}
x^2+y^2+z^2+k_1yz+k_2xz+k_3xy
=(3+k_1+k_2+k_3)xyz,
\end{align*}
where $k_1,k_2,k_3\in\mbN$. This leads to the following Laurent
polynomial
\begin{align}
\widehat\mcT=\widehat{\mcT}_{k_1,k_2,k_3}=\frac{x^2+y^2+z^2+k_1yz+k_2xz+k_3xy}{xyz}.
\label{eq:I4-basic-invariant}
\end{align}
In fact, it is a generalized Laurent mutation invariant associated with the mutation-preserving generalized cluster algebra of type (I.4) in
Table~\ref{tab: rank-three-classification}. The associated generalized mutations are given by
\begin{align}
\widehat\mu_1(x,y,z)&=
\left(\frac{y^2+k_1yz+z^2}{x},y,z\right),\
\widehat\mu_2(x,y,z)=
\left(x,\frac{x^2+k_2xz+z^2}{y},z\right),\notag\\
\widehat\mu_3(x,y,z)&=
\left(x,y,\frac{x^2+k_3xy+y^2}{z}\right).
\label{eq:I4-generalized-mutations} 
\end{align} We call it a \emph{generalized Markov Laurent mutation invariant}. Our objective is to classify all generalized Markov Laurent mutation invariants.

We begin by recalling the terminology and preliminary results from field
theory and commutative algebra needed for the proof. Throughout this section, all fields are assumed to have characteristic
zero. Let $L/K$ be a finitely generated field extension.
\begin{enumerate}
    \item A subset $\{u_1,\dots,u_d\}\subseteq L$ is a
    \emph{transcendence basis} of $L/K$ if its elements are
    algebraically independent over $K$ and $L$ is algebraic over
    $K(u_1,\dots,u_d)$. Its cardinality is the \emph{transcendence
    degree} of $L/K$, denoted by $\operatorname{trdeg}_K L$. A finitely
    generated field extension of transcendence degree one is called a
    \emph{function field of one variable} over $K$.

    \item The extension $L/K$ is called \emph{regular} if $K$ is
    algebraically closed in $L$, that is, every element of $L$ that is
    algebraic over $K$ belongs to $K$. Indeed, in characteristic
    zero every finitely generated field extension is separably
    generated, so the separability condition in the general definition
    of a regular extension is automatic.

    \item A nonzero commutative ring $A$ is an \emph{integral domain}
    if it has no zero divisors. Equivalently, $ab=0$ for $a,b\in A$
    implies that $a=0$ or $b=0$.

    \item The notation $K(Y)^\times=K(Y)\setminus\{0\}$ denotes the
    multiplicative group of nonzero rational functions in $Y$ over
    $K$. If $p(Y)\in K[Y]$ is irreducible and
    $r(Y)\in K(Y)^\times$, write $r=p^m a/b$, where
    $a,b\in K[Y]$ are not divisible by $p$. The integer $m$ is the
    \emph{order of $r$ along $p$}, denoted by
    $\operatorname{ord}_p(r)$.

    \item The notation $\operatorname{Aut}_K(E)$ denotes the group of
    field automorphisms of $E$ that fix $K$ pointwise. Thus,
    $\sigma\in\operatorname{Aut}_K(E)$ satisfies $\sigma(a)=a$ for
    every $a\in K$. Such an automorphism has \emph{infinite order} if
    $\sigma^m\neq\operatorname{id}_E$ for every integer $m\geq1$.
    For a subgroup $G\leq\operatorname{Aut}_K(E)$, its \emph{fixed
    subfield} is $E^G\coloneqq\{f\in E\mid\tau(f)=f\text{ for every
    }\tau\in G\}$.
    In particular, the fixed subfield of a single automorphism is denoted by
    $
    E^{\langle\sigma\rangle}
    \coloneqq\{f\in E\mid \sigma(f)=f\}.
    $

    \item An $R$-module $M$ is called \emph{finite free} if it has a
    finite basis over $R$. Equivalently, $M\cong R^d$ for some nonnegative
    integer $d$. Thus, every element of $M$ can be written uniquely as
    an $R$-linear combination of finitely many basis elements.
\end{enumerate}

We shall use the mutation groups above also as groups of field
automorphisms. More precisely, each $\gamma\in\widehat\Gamma$, initially
a birational transformation of the triple $(x,y,z)$, acts from the left
on $\mcF$ by
\begin{align*}
\rho(\gamma)(f)=f(\gamma^{-1}(x,y,z)),\ f\in\mcF.
\end{align*}
The map $\rho:\widehat\Gamma\to\operatorname{Aut}_{\mathbb Q}(\mcF)$
is an injective group homomorphism. We therefore identify
$\widehat\Gamma$ with its image and use the same symbol $\gamma$ for
the birational transformation and the induced field automorphism. In
particular, since every mutation is an involution, we have $\widehat\mu_i(f)=f\bigl(\widehat\mu_i(x,y,z)\bigr)$ and
\begin{align*}
 \mcF^{\widehat\Gamma}=\{f\in\mcF\mid f(\widehat\mu_i(x,y,z))=f,\ i=1,2,3\}.
\end{align*}
The same convention applies to classical cluster mutation groups.

\subsection{Regular function fields}

We first establish the regularity properties of the function fields
associated with the three basic mutation invariants.

\begin{lemma}[Regularity criterion, {\cite{GD65}}]
\label{lem:regularity-criterion}
Let $K$ be a field of characteristic zero, let $L/K$ be a finitely
generated field extension, and let $\overline{K}$ be an algebraic
closure of $K$. Then $L/K$ is regular if and only if $
L\otimes_K \overline{K}$
is an integral domain.
\end{lemma}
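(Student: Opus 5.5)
The plan is to prove both directions by reducing $L\otimes_K\overline{K}$ to tensor products with finite simple extensions. Two standard facts drive the argument. First, over a field every module is flat, so tensoring an inclusion of $K$-vector spaces with $\overline{K}$ (or with $L$) gives an inclusion. Second, in characteristic zero every finite extension $K'/K$ is simple and separable: $K'=K(\alpha)$ by the primitive element theorem. Therefore $K'\cong K[T]/(f)$, where $f$ is the minimal polynomial of $\alpha$ and has distinct roots in $\overline{K}$. Hence, for any $K$-algebra $A$, we have $A\otimes_K K'\cong A[T]/(f)$.

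\textbf{Necessity} (the ring is a domain $\Rightarrow$ $L/K$ is regular). We argue by contraposition. Suppose $\alpha\in L$ is algebraic over $K$ with $\alpha\notin K$, and let $f$ be its minimal polynomial, so $\deg f\geq 2$. By flatness, $K(\alpha)\otimes_K\overline{K}$ embeds in $L\otimes_K\overline{K}$. On the other hand,
\begin{align*}
K(\alpha)\otimes_K\overline{K}\cong \overline{K}[T]/(f)\cong \prod_{i=1}^{\deg f}\overline{K},
\end{align*}
by the Chinese remainder theorem, because $f$ splits into distinct linear factors over $\overline{K}$. A product of at least two fields has zero divisors. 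Hence $L\otimes_K\overline{K}$ is not a domain.

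\textbf{Sufficiency} ($L/K$ regular $\Rightarrow$ the ring is a domain). Write $\overline{K}$ as the directed union of its finite subextensions $K'$. Tensor products commute with directed colimits, and by flatness all transition maps are injective, so $L\otimes_K\overline{K}$ is the directed union of the subrings $L\otimes_K K'$. A directed union of integral domains is an integral domain, because any two elements already lie in a common member. It therefore suffices to show that each
\begin{align*}
L\otimes_K K'\cong L[T]/(f), \qquad K'=K(\alpha),
\end{align*}
is a field, i.e.\ that the minimal polynomial $f$ of $\alpha$ over $K$ remains irreducible over $L$.

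This irreducibility over $L$ is the key step, and I would argue it as follows. Suppose $f=gh$ in $L[T]$ with $g$ monic and $0<\deg g<\deg f$. Work in an algebraic closure of $L$, which contains a copy of $\overline{K}$. Then $g$ is the product of some of the linear factors $T-\beta_i$, where the $\beta_i$ are the roots of $f$ and are algebraic over $K$. The coefficients of $g$ are elementary symmetric functions in a subset of the $\beta_i$, so they are algebraic over $K$. They also lie in $L$, and $K$ is algebraically closed in $L$, so they lie in $K$. Thus $g\in K[T]$ is a proper factor of $f$, contradicting the irreducibility of $f$ over $K$. Hence $L[T]/(f)$ is a field, and the proof is complete.
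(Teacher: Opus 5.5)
Your proof is correct. Note that the paper gives no proof of this lemma; it cites it from EGA IV, where it is a special case of the general theory of geometrically integral (regular) extensions. That theory works in arbitrary characteristic, where regularity also demands separability, and it is phrased for arbitrary base changes $K'/K$.

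Your argument is a self-contained proof in characteristic zero:
\begin{itemize}
\item For ``$L\otimes_K\overline K$ a domain $\Rightarrow$ $L/K$ regular'', you embed $K(\alpha)\otimes_K\overline K\cong\prod\overline K$ into $L\otimes_K\overline K$ by flatness.
\item For the converse, you write $L\otimes_K\overline K$ as a directed union of the fields $L\otimes_K K(\alpha)\cong L[T]/(f)$. You show $f$ stays irreducible over $L$ because the coefficients of any monic factor are algebraic over $K$, hence lie in $K$.
\end{itemize}

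This route is elementary and shows that finite generation of $L/K$ is never needed. The cost is that it relies on the primitive element theorem and on $f$ being separable, so it does not carry over verbatim to positive characteristic. The cited result handles that case.

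The paper only ever uses the implication ``the tensor product is a domain $\Rightarrow$ $L/K$ is regular''. This happens in \Cref{prop:I4-regular-extension} and \Cref{prop:classical-regularity-after-substitution}, where $\mcF\otimes_K\overline K$ is shown to be a field. That is exactly your short contrapositive argument, so the directed-union half is not needed for the paper's applications.

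One small point should be made explicit in your last step. From $g\in K[T]$ and $g\mid f$ in $L[T]$, you get $g\mid f$ in $K[T]$ by uniqueness of division with remainder by the monic polynomial $g$. It is this divisibility that contradicts the irreducibility of $f$ over $K$.
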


\begin{lemma}
\label{lem:base-change-rational-function-field}
Let $K'/K$ be an algebraic field extension and let $Y$ be an
indeterminate. Then there is a natural isomorphism
\begin{align*}
K(Y)\otimes_K K' \cong K'(Y).
\end{align*}
\end{lemma}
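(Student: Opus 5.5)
We prove the isomorphism by writing down the natural map explicitly and then checking bijectivity through a localization argument. First we define a $K'$-algebra homomorphism
\begin{align*}
\Phi\colon K(Y)\otimes_K K'\longrightarrow K'(Y),\quad f\otimes a\longmapsto a f,
\end{align*}
using the inclusions $K(Y)\subseteq K'(Y)$ and $K'\subseteq K'(Y)$. Since the expression is $K$-bilinear in $(f,a)$, $\Phi$ is well defined. Our strategy is to identify the source with a localization of $K'[Y]$, and then show that this localization is already a field. A field lying between $K'[Y]$ and $K'(Y)$ must equal $K'(Y)$.

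For the first step, recall that $K(Y)=S^{-1}K[Y]$ with $S=K[Y]\setminus\{0\}$. Tensor products commute with localization, and $K[Y]\otimes_K K'\cong K'[Y]$ because $\{Y^i\}$ is a $K$-basis of $K[Y]$ and a $K'$-basis of $K'[Y]$. Together these give
\begin{align*}
K(Y)\otimes_K K'\cong S^{-1}K'[Y],
\end{align*}
where $S$ is now viewed inside $K'[Y]$. Under this identification $\Phi$ becomes the natural map $S^{-1}K'[Y]\to K'(Y)$. This map is injective, since $K'[Y]$ is a domain and $0\notin S$. So it remains to prove surjectivity, namely that every nonzero $g\in K'[Y]$ becomes invertible in $S^{-1}K'[Y]$.

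The main step is the following. Given $0\neq g\in K'[Y]$, only finitely many coefficients of $g$ occur, so we may work in a finite subextension $K\subseteq E\subseteq K'$ containing them; this is where algebraicity of $K'/K$ is used. We then consider the norm
\begin{align*}
N(g)=\det\bigl(m_g\bigr)\in K[Y],
\end{align*}
where $m_g$ is multiplication by $g$ on the free $K[Y]$-module $E[Y]\cong K[Y]^{[E:K]}$. The norm $N(g)$ is nonzero because $g$ is a nonzerodivisor on $E[Y]$, so $N(g)\in S$. By Cayley--Hamilton, $g$ divides $N(g)$ in $E[Y]\subseteq K'[Y]$: writing the characteristic polynomial of $m_g$ and evaluating at $g$ gives $N(g)=g\cdot h$ with $h\in E[Y]$.

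It follows that $g^{-1}=h/N(g)$ lies in $S^{-1}K'[Y]$. Hence $S^{-1}K'[Y]$ is a field containing $K'[Y]$, and so it equals $K'(Y)$. This proves $\Phi$ is bijective. Naturality is clear from the construction, and everything is compatible with the $K'$-algebra structures.

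We expect the norm step to be the only real point, since the other steps are formal properties of tensor products and localization. We also note that the result fails for transcendental $K'$ (for example, when $Y-t$ with $t\in K'$ transcendental cannot be inverted). This confirms that reducing to a finite subextension is exactly where the hypothesis on $K'/K$ enters.
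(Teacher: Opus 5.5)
Your proof is correct and follows essentially the same route as the paper: identify $K(Y)\otimes_K K'$ with $S^{-1}K'[Y]$, reduce to a finite subextension containing the coefficients of $g$, and use the nonzero determinant of multiplication by $g$ on the free $K[Y]$-module $E[Y]$ as an element of $K[Y]$ divisible by $g$. The only differences are cosmetic: you obtain the cofactor $h$ via Cayley--Hamilton where the paper uses the adjugate identity $M_g\operatorname{adj}(M_g)=\det(M_g)I_d$, and you check injectivity of $S^{-1}K'[Y]\to K'(Y)$ explicitly, which the paper leaves implicit.
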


\begin{proof}
Let $
S=K[Y]\setminus\{0\}.$
Since localization commutes with scalar extension, we have
\begin{align*}
K(Y)\otimes_K K'
=
S^{-1}K[Y]\otimes_K K'
\cong
S^{-1}K'[Y].
\end{align*}
It remains to prove that every nonzero polynomial in $K'[Y]$ is
invertible in $S^{-1}K'[Y]$.

Let $g(Y)\in K'[Y]$ be nonzero. All the coefficients of $g(Y)$ belong
to some finite extension $K_0/K$ contained in $K'$. Let
$d=[K_0:K]$ and choose a $K$-basis $e_1,\dots,e_d$ of $K_0$. Then,
\begin{align*}
K_0[Y]=K[Y]e_1\oplus\cdots\oplus K[Y]e_d,
\end{align*}
which is a finite free $K[Y]$-module of rank $d$. Multiplication
by $g(Y)$ defines a $K[Y]$-linear map
\begin{align*}
m_g:K_0[Y]\longrightarrow K_0[Y],
\ u\longmapsto g(Y)u, 
\end{align*}
represented in the above basis by a $d\times d$ matrix over $K[Y]$.
Since $K_0[Y]$ is an integral domain and $g(Y)\neq0$, the map $m_g$ is
injective, hence its determinant is nonzero.
More precisely, let $M_g\in \operatorname{Mat}_{d\times d}(K[Y])$ be the matrix of $m_g$ in the basis $e_1,\dots,e_d$.  The
standard adjugate matrix formula gives
\begin{align*}
O \neq M_g\operatorname{adj}(M_g)
=\det(M_g)I_d.
\end{align*}
Let $v_1\in K[Y]^d$ be the coordinate vector of
$1\in K_0[Y]$, and let $h(Y)\in K_0[Y]$ be the element whose
coordinate vector is $\operatorname{adj}(M_g)v_1$. Applying the above
matrix identity to $v_1$ gives
\begin{align*}
g(Y)h(Y)=\det(M_g).
\end{align*}
Since the right hand side belongs to $S$, the polynomial $g(Y)$ is
invertible in $S^{-1}K'[Y]$. Hence,
\begin{align*}
S^{-1}K'[Y]=K'(Y),
\end{align*}
which proves the assertion.
\end{proof}

\begin{example}
Let $K=\mathbb Q$ and $L=\mathbb Q(t)$, where $t$ is transcendental
over $\mathbb Q$. We denote $\overline{\mathbb Q}\subset\mathbb C$
denote the algebraic closure of $\mathbb Q$ inside $\mathbb C$. Then,
 $L/K$ is regular and
$
L\otimes_{\mathbb Q}\overline{\mathbb Q}
\cong\overline{\mathbb Q}(t)
$
is a field, hence an integral domain. In contrast, if
$L'=\mathbb Q(\sqrt2)(t)$, then $L'/\mathbb Q$ is not regular and
\begin{align*}
L'\otimes_{\mathbb Q}\overline{\mathbb Q}
\cong
\overline{\mathbb Q}(t)\times\overline{\mathbb Q}(t),
\end{align*}
which is not an integral domain.
\end{example}

\begin{lemma}
\label{lem:square-even-valuations}
Let $K$ be a field and let $r(Y)\in K(Y)^\times$. If $r(Y)$ is a
square in $K(Y)$, then
$
\operatorname{ord}_{p}(r)
$
is even for every irreducible polynomial $p(Y)\in K[Y]$.
\end{lemma}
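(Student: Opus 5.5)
Write $r=s^2$ with $s\in K(Y)^\times$; note that $s\neq 0$ because $r\neq0$. The natural tool is the fact that $\operatorname{ord}_p$ is a group homomorphism $K(Y)^\times\to\mbZ$, i.e. $\operatorname{ord}_p(fg)=\operatorname{ord}_p(f)+\operatorname{ord}_p(g)$. Granting this, we get $\operatorname{ord}_p(r)=\operatorname{ord}_p(s^2)=2\operatorname{ord}_p(s)$, which is even, as claimed.

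It remains to justify the additivity and well-definedness of $\operatorname{ord}_p$, using only that $K[Y]$ is a unique factorization domain and that $p$ is irreducible, hence prime. First we check well-definedness. Suppose $p^m a/b=p^{m'}a'/b'$ with $a,b,a',b'$ not divisible by $p$, and say $m\geq m'$. Then $p^{m-m'}ab'=a'b$. Since $p$ is prime and divides neither $a'$ nor $b$, it does not divide $a'b$, which forces $m=m'$. Next we check additivity. Given $f=p^m a/b$ and $g=p^n c/d$, we have $fg=p^{m+n}(ac)/(bd)$, and $p$ divides neither $ac$ nor $bd$ by primality. Hence $\operatorname{ord}_p(fg)=m+n$.

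Concretely for the square: write $s=p^n c/d$ with $p\nmid c,d$. Then $r=s^2=p^{2n}c^2/d^2$ with $p\nmid c^2,d^2$, so by well-definedness $\operatorname{ord}_p(r)=2n$.

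There is no real obstacle here. The only point requiring care is the well-definedness of $\operatorname{ord}_p$, which rests on the primality of irreducible elements in the principal ideal domain $K[Y]$. The existence of the representation $r=p^m a/b$ follows by reducing $r$ to lowest terms and extracting the exact power of $p$ from the numerator or the denominator.
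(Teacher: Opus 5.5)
Your proof is correct and takes essentially the same route as the paper: write $r$ as the square of a rational function and use additivity of $\operatorname{ord}_p$ to get an even order. The paper writes the square root as $a/b$ and computes $\operatorname{ord}_p(r)=2\operatorname{ord}_p(a)-2\operatorname{ord}_p(b)$, while your explicit check of well-definedness and additivity via the primality of $p$ in $K[Y]$ fills in a step the paper leaves implicit.
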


\begin{proof}
Write
\begin{align*}
r(Y)=\left(\frac{a(Y)}{b(Y)}\right)^2
\end{align*}
with $a(Y),b(Y)\in K[Y]$ nonzero. For every irreducible polynomial
$p(Y)\in K[Y]$, we have
\begin{align*}
\operatorname{ord}_{p}(r)
=
2\operatorname{ord}_{p}(a)
-
2\operatorname{ord}_{p}(b),
\end{align*}
which is even.
\end{proof}

\begin{proposition}
\label{prop:I4-regular-extension}
Let $t=\widehat\mcT$ and $K=\mathbb Q(t,z)$. Then $\mcF/K$ is a
regular function field of transcendence degree one.
\end{proposition}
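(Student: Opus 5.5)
The plan is to exhibit $\mcF$ as generated over $K=\mathbb Q(t,z)$ by $x$ and $y$, and to show that the defining relation between them is geometrically irreducible over $K$. The transcendence degree is immediate. The elements $t$ and $z$ are algebraically independent over $\mathbb Q$: $z$ is transcendental, and $t$ is not algebraic over $\mathbb Q(z)$ because it has nonconstant dependence on $x$. Hence $\operatorname{trdeg}_K\mcF=3-2=1$. Next, $\mcF=K(x,y)$, and $y$ satisfies the quadratic
\begin{align*}
P(Y)=Y^2+(k_1z+k_3x-txz)Y+(x^2+k_2xz+z^2)=0
\end{align*}
over $K(x)$. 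Moreover, $x$ is transcendental over $K$, since otherwise the transcendence degree would be $0$. Thus $\mcF=K(x)[Y]/(P)$, with $P$ irreducible over $K(x)$: the generic point $y$ is not in $K(x)$, because $K(x)=\mathbb Q(t,x,z)$ has transcendence degree $3$ and equals $\mcF$ only if $y\in\mathbb Q(t,x,z)$. That case has to be excluded, and it will come out of the discriminant argument below.

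By \Cref{lem:regularity-criterion}, it suffices to show that $\mcF\otimes_K\overline K$ is an integral domain. Using $\mcF\cong K(x)[Y]/(P)$ and \Cref{lem:base-change-rational-function-field}, we get
\begin{align*}
\mcF\otimes_K\overline K\cong \overline K(x)[Y]/(P).
\end{align*}
This ring is a domain if and only if $P$ stays irreducible over $\overline K(x)$. For a monic quadratic, irreducibility over a field of characteristic zero is equivalent to its discriminant not being a square. The discriminant is
\begin{align*}
\Delta(x)=(k_1z+k_3x-txz)^2-4(x^2+k_2xz+z^2)\in K[x],
\end{align*}
a quadratic polynomial in $x$ over $K$. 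So the key step is to show that $\Delta$ is not a square in $\overline K(x)$. By \Cref{lem:square-even-valuations}, this amounts to showing that some root of $\Delta$ in $\overline K$ has odd order. Equivalently, since $\deg_x\Delta=2$ with leading coefficient $(k_3-tz)^2-4\neq0$, we must show that the discriminant of $\Delta$ as a polynomial in $x$ is nonzero, so that $\Delta$ has two simple roots.

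Writing $\Delta=\alpha x^2+\beta x+\gamma$, the coefficients are
\begin{align*}
\alpha&=(k_3-tz)^2-4,\\
\beta&=2k_1z(k_3-tz)-4k_2z,\\
\gamma&=(k_1^2-4)z^2.
\end{align*}
I will compute $\beta^2-4\alpha\gamma$ and check that it is a nonzero element of $\mathbb Q(t,z)$. After dividing by $z^2$ it becomes a polynomial in $s=tz$ that is quadratic in $s$, and its $s^2$ coefficient is $4k_1^2-4(k_1^2-4)=16\neq0$, so it is not identically zero. This settles the case $k_1\neq 2$.

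The main obstacle is the degenerate case $k_1=2$, where $\gamma=0$ and $\Delta=x(\alpha x+\beta)$. There the two roots are $0$ and $-\beta/\alpha$, and these are distinct and simple as long as $\beta\neq0$. When $k_1=2$ we have $\beta=4z(k_3-k_2-tz)\neq0$ because $t$ is transcendental. So in every case $\Delta$ has a simple root and is not a square in $\overline K(x)$. It follows that $P$ is irreducible over $\overline K(x)$, hence also over $K(x)$, which also takes care of the irreducibility needed in the first step. Therefore $\mcF\otimes_K\overline K$ is a field, and $\mcF/K$ is regular.
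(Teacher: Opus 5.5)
Your proof is correct and follows the paper's argument: you reduce regularity, via the base-change lemma and the regularity criterion, to the quadratic staying irreducible over $\overline K(\cdot)$, and you get that from its discriminant having two simple roots in $\overline K$. The only differences are that you take $y$ as the root over $K(x)$ where the paper takes $x$ over $K(y)$ (this just swaps $k_1$ and $k_2$ and gives the same $16z^2\bigl((k_3-tz)^2-k_1k_2(k_3-tz)+k_1^2+k_2^2-4\bigr)$), and that your separate case $k_1=2$ is unnecessary, since $\beta^2-4\alpha\gamma\neq0$ already yields two distinct simple roots when $\gamma=0$.
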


\begin{proof}
Regarding $x$ as an algebraic element over $K(y)$, we obtain its annihilating polynomial
\begin{align}
f(X)=X^2+(k_2z+k_3y-tyz)X+y^2+k_1yz+z^2.
\label{eq:I4-quadratic-model}
\end{align}
Hence, $\mcF=\mathbb Q(x,y,z)$ is algebraic over
$\mathbb Q(t,y,z)$, which implies that $t,y,z$ are algebraically
independent and $\operatorname{trdeg}_K\mcF=1$.

Let $\overline K$ be an algebraic closure of $K$. The discriminant of
$f(X)$ about $y$ is given as 
\begin{align*}
\Delta(y)
={}&\bigl((k_3-tz)y+k_2z\bigr)^2
-4(y^2+k_1yz+z^2)\\
={}&\bigl((k_3-tz)^2-4\bigr)y^2
+2z\bigl(k_2(k_3-tz)-2k_1\bigr)y
+(k_2^2-4)z^2.
\end{align*}
Then, the discriminant of this quadratic polynomial $\Delta(y)$ is
\begin{align*}
D=16z^2\bigl((k_3-tz)^2-k_1k_2(k_3-tz)
+k_1^2+k_2^2-4\bigr),
\end{align*}
which is nonzero since $t$ is transcendental over $\mathbb Q(z)$.
Moreover, we have $(k_3-tz)^2-4\neq 0$. Take
\begin{align*}
Y=y+\frac{z\bigl(k_2(k_3-tz)-2k_1\bigr)}{(k_3-tz)^2-4}.
\end{align*}
Then, completing the square gives
\begin{align*}
\Delta(y)
=\bigl((k_3-tz)^2-4\bigr)Y^2
-\frac{D}{4\bigl((k_3-tz)^2-4\bigr)}.
\end{align*}
Over $\overline K$, choose $c\neq0$ such that
\begin{align*}
c^2=\frac{D}{4\bigl((k_3-tz)^2-4\bigr)^2}.
\end{align*}
Then, we have 
$
\Delta(y)=\bigl((k_3-tz)^2-4\bigr)(Y-c)(Y+c).
$
Since the characteristic is zero, $c$ and $-c$ are distinct. Hence,
both roots are simple, and
$
\operatorname{ord}_{Y-c}(\Delta)
=\operatorname{ord}_{Y+c}(\Delta)=1.
$
Thus, Lemma~\ref{lem:square-even-valuations} shows that $\Delta(y)$ is
not a square in $\overline K(y)=\overline K(Y)$. Note that a quadratic
polynomial over a field of characteristic different from two is
reducible if and only if its discriminant is a square. It follows that
$f(X)$ is irreducible over $\overline K(y)$, and hence also over
$K(y)$. The evaluation
homomorphism
\begin{align*}
\Phi:K(y)[X]\longrightarrow\mcF,
\ X\longmapsto x,
\end{align*}
has image $K(y)[x]=K(y)(x)=\mcF$ and kernel $(f(X))$, since $x$ is
algebraic over $K(y)$ and $f(X)$ is its minimal polynomial. Thus, by
the first isomorphism theorem,
\begin{align*}
\mcF\cong K(y)[X]/(f(X)).
\end{align*}
Note that extending scalars from $K$ to $\overline K$ gives
\begin{align*}
\mcF\otimes_K\overline K
&\cong
\left(K(y)[X]/(f(X))\right)\otimes_K\overline K\notag\\
&\cong
\left(K(y)\otimes_K\overline K\right)[X]/(f(X))\notag\\
&\cong
\overline K(y)[X]/(f(X)).
\end{align*}
For the second isomorphism, scalar extension commutes with forming a
polynomial ring. Moreover, $\overline K$ is flat over the field $K$,
so tensoring with $\overline K$ also commutes with taking the quotient
by the ideal $(f(X))$. The last isomorphism follows from
Lemma~\ref{lem:base-change-rational-function-field}. Here, the same
symbol $f(X)$ denotes the image of $f(X)$ after extending its
coefficients from $K$ to $\overline K$. Since $f(X)$ is irreducible
over $\overline K(y)$, the last quotient is a field. The regularity
criterion Lemma~\ref{lem:regularity-criterion} now proves that
$\mcF/K$ is regular.
\end{proof}

\subsection{An infinite-order element}

We exhibit an infinite-order element of the generalized mutation group $\widehat\Gamma=\langle\widehat\mu_1,\widehat\mu_2,
\widehat\mu_3\rangle$, which
will be used to determine the corresponding fixed subfield.

\begin{lemma}
\label{lem:I4-infinite-order}
The element $\sigma=\widehat\mu_1\widehat\mu_2\in \widehat\Gamma$ has infinite
order. In particular, the induced field automorphism
$\sigma\in\operatorname{Aut}_K(\mcF)$, where
$K=\mathbb Q(\widehat\mcT,z)$, also has infinite order.
\end{lemma}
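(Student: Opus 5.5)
The plan is to exhibit one explicit point at which the iterates of $\sigma=\widehat\mu_1\widehat\mu_2$ never return to their starting value. If $\sigma^m=\mathrm{id}$ as a birational map for some $m\geq1$, then $\sigma^m$ fixes every point of a Zariski-dense set of positive triples where all iterates are defined. Positive real triples qualify: each $\widehat\mu_i$ in \eqref{eq:I4-generalized-mutations} maps $\mbR_{>0}^3$ to itself, since numerators and denominators are positive because $k_i\in\mbN$. So it suffices to find one positive triple $p$ whose orbit $\{\sigma^m(p)\}_{m\geq0}$ is infinite, say with strictly increasing components. The field-automorphism statement then follows because $\rho$ is an injective homomorphism, so $\rho(\sigma)^m=\mathrm{id}$ if and only if $\sigma^m=\mathrm{id}$. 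Moreover $\sigma$ fixes $z$ (neither $\widehat\mu_1$ nor $\widehat\mu_2$ changes $z$) and fixes $\widehat\mcT$ (it is a mutation invariant), so $\sigma\in\operatorname{Aut}_K(\mcF)$.

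The key step is a growth estimate. Take $p=(a,b,c)$ with $c>0$ and $a,b$ large relative to $c$, for instance $a\ge b\ge c$ and $b$ large. Applying $\widehat\mu_2$ replaces $y$ by $(x^2+k_2xz+z^2)/y$. Applying $\widehat\mu_1$ replaces $x$ by $(y^2+k_1yz+z^2)/x$. With $z=c$ fixed, these are the Vieta involutions of the quadratic \eqref{eq:I4-quadratic-model} in $x$ and of the analogous quadratic in $y$, on the conic $\widehat\mcT(x,y,c)=t$. The composition acts on this conic like a hyperbolic (Chebyshev-type) map, provided the relevant trace exceeds $2$, i.e. $tc-k_3>2$.

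To make this concrete, I would pass to the linear recurrence. Along the orbit, the $x$- and $y$-coordinates are alternately updated by Vieta, $x'=(tc-k_3)y-k_2c-x$ and $y'=(tc-k_3)x-k_1c-y$. They therefore satisfy an inhomogeneous second-order linear recurrence with characteristic polynomial $\lambda^2-(tc-k_3)\lambda+1$. If $tc-k_3>2$, this has a real root greater than $1$, so the sequence grows exponentially and is in particular not periodic.

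Finally, choose $p$ to force $tc-k_3>2$. Taking $c$ large with $a=b=c$ gives $t=(3+k_1+k_2+k_3)/c$, which is too small, so instead take $a=b$, $c=1$ and $a$ large. Then $t\approx 2a^2/a^2\cdot 1+\dots$, and more precisely $tc=(2a^2+k_3a^2+(k_1+k_2)a+1)/a^2\to 2+k_3$. This gives $tc-k_3\to2$, which is borderline, so a better choice is $a=Nb$ with $N$ large and $c=1$. Then $tc\approx N+1/N+k_3$, so $tc-k_3>2$. The main obstacle is controlling the inhomogeneous terms and the exact monotonicity. I would handle it by first checking directly that for such $p$ the sequence of maximal components strictly increases, using the inequality $x'\ge (tc-k_3)y-x-k_2c$ with $(tc-k_3)$ well above $2$. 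That monotonicity alone rules out $\sigma^m(p)=p$.
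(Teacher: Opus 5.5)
Your proposal is correct, but it takes a different route from the paper.

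The paper specializes to $(x,y,z)=(s,1,1)$ with $s$ an indeterminate. It then proves by induction, directly from the exchange relations $y_{n+1}y_n=x_n^2+k_2x_n+1$ and $x_{n+1}x_n=y_{n+1}^2+k_1y_{n+1}+1$, that $\deg_s x_n=2n+1$ and $\deg_s y_n=2n$. Hence $\sigma^m(s,1,1)\neq(s,1,1)$ for every $m\geq1$.

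You instead evaluate at a positive real point and track sizes. This is the numerical counterpart of the same mechanism: the degree in $s$ plays the role of the logarithm of the size. Your version needs $k_i\in\mbN$ to stay in the positive orthant, whereas the paper's degree count is purely algebraic.

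Your route can be made much shorter. Along the orbit write $u_0=y_0$, $u_1=x_0$, $u_2=y_1$, $u_3=x_1,\dots$. The exchange relations then read $u_{n+1}u_{n-1}=u_n^2+\kappa c\,u_n+c^2>u_n^2$ with $\kappa\in\{k_1,k_2\}$, so the ratios $u_{n+1}/u_n$ strictly increase. Hence for any start with $a\geq b$, e.g.\ $p=(1,1,1)$, you get $u_1<u_2<u_3<\cdots$, so $\sigma^m(p)\neq p$ for all $m\geq1$. This needs no choice $a=Nb$, no linear recurrence, and no trace estimate.

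Two details in your sketch are off, though neither breaks the argument.
\begin{itemize}
\item The trace is never an obstruction. At every positive point, $tz-k_3=(x^2+y^2+z^2+k_1yz+k_2xz)/(xy)>2$, because $x^2+y^2\geq 2xy$. Your claim that $a=b=c$ gives a value that is ``too small'' drops a factor of $c$: there $tc-k_3=3+k_1+k_2$.
\item A characteristic root larger than $1$ does not by itself give exponential growth of the inhomogeneous recurrence, because the coefficient of the dominant root might vanish. So the direct monotonicity check you defer to is actually required, not merely a refinement. The ratio argument above supplies it.
\end{itemize}
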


\begin{proof}
We denote
$\sigma=\widehat\mu_1\widehat\mu_2$. Specialize
$z=1$ and then evaluate at $(x,y)=(s,1)$, where $s$ is an
indeterminate. Define $(x_n,y_n)\in\mathbb Q(s)^2$ recursively by
$
(x_n,y_n,1)=\sigma^n(s,1,1).
$
The mutation formulas give
\begin{align}
y_{n+1}=\frac{x_n^2+k_2x_n+1}{y_n},\
x_{n+1}=\frac{y_{n+1}^2+k_1y_{n+1}+1}{x_n}.
\label{eq:I4-degree-recurrence}
\end{align}

For a nonzero rational function $r(s)=p(s)/q(s)$, with coprime
$p,q\in\mathbb Q[s]$, denote $\deg_s r=\deg p-\deg q$. We claim that
\begin{align}
\deg_s x_n=2n+1,\ \deg_s y_n=2n
\label{eq:I4-degree-growth}
\end{align}
for every $n\geq0$. This is clear for $n=0$. Suppose that it holds for
some $n$. Since $\deg_s x_n=2n+1>0$, the first
equality in
\eqref{eq:I4-degree-recurrence} therefore gives
$\deg_s y_{n+1}=2(2n+1)-2n=2n+2$.
Similarly, by $\deg_s y_{n+1}=2n+2>0$, the
second equality in \eqref{eq:I4-degree-recurrence} yields
$\deg_s x_{n+1}=2(2n+2)-(2n+1)=2n+3$.
This proves \eqref{eq:I4-degree-growth} by induction.

If $\sigma^m$ is the identity as a birational transformation for some
$m\geq1$, then the above specialization gives $x_m=s$, and therefore
$\deg_s x_m=1$. On the other hand,
\eqref{eq:I4-degree-growth} gives $\deg_s x_m=2m+1>1$, which is a
contradiction. Thus, $\sigma$ has infinite order in $\widehat\Gamma$.
Since $\rho$ is injective, the field automorphism induced by $\sigma$
also has infinite order.
\end{proof}

\begin{lemma}
\label{lem:fixed-field-one-variable}
Let $E/K$ be a regular function field of transcendence degree one. If
$\sigma\in\operatorname{Aut}_K(E)$ has infinite order, then
$
E^{\langle\sigma\rangle}=K.
$
\end{lemma}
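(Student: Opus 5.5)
The plan is to argue by contradiction, showing that if the fixed field were strictly larger than $K$, then $\sigma$ would lie in a finite automorphism group and so have finite order. Put $F=E^{\langle\sigma\rangle}$. Since $\sigma\in\operatorname{Aut}_K(E)$ fixes $K$ pointwise, we have the tower $K\subseteq F\subseteq E$, and only the reverse inclusion $F\subseteq K$ needs proof. So suppose there is some $u\in F\setminus K$.

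\textbf{Step 1: $u$ is transcendental over $K$.} This is where regularity enters. $K$ is algebraically closed in $E$, and $u\notin K$, so $u$ cannot be algebraic over $K$. Hence $\operatorname{trdeg}_K K(u)=1$. Since $\operatorname{trdeg}_K E=1$ and transcendence degree is additive in towers, $E$ is algebraic over $K(u)$, and therefore over $F\supseteq K(u)$.

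\textbf{Step 2: $E/F$ is finite.} Because $E$ is finitely generated over $K$, say $E=K(w_1,\dots,w_m)$, we also have $E=F(w_1,\dots,w_m)$. Each $w_i$ is algebraic over $F$ by Step 1. A field generated by finitely many algebraic elements is a finite extension, so $d=[E:F]<\infty$.

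\textbf{Step 3: $\sigma$ has finite order, a contradiction.} By definition of $F$, $\sigma$ fixes $F$ pointwise, so $\sigma\in\operatorname{Aut}_F(E)$. For a finite extension one has $|\operatorname{Aut}_F(E)|\le [E:F]=d$. This follows from Dedekind's lemma on linear independence of distinct field embeddings: if there were $d+1$ distinct $F$-automorphisms, the $F$-linear maps $E\to E$ they define would be $E$-linearly dependent, which is impossible. Therefore $\sigma$ lies in a finite group, so $\sigma^m=\operatorname{id}_E$ for some $1\le m\le d$. This contradicts the hypothesis that $\sigma$ has infinite order, so $F=K$.

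I do not expect a serious obstacle. The only point needing care is Step 1, where it matters that regularity is exactly the condition that every element of $F\setminus K$ is transcendental. Without it, $F$ could be a nontrivial finite algebraic extension of $K$ inside $E$, and then $E/F$ would not be finite. The remaining steps are standard field theory: additivity of transcendence degree and the bound $|\operatorname{Aut}_F(E)|\le[E:F]$.
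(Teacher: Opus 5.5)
Your proof is correct and follows essentially the same route as the paper: regularity makes a fixed element outside $K$ transcendental, transcendence degree one plus finite generation makes $E$ finite over the field it generates, and the bound on the automorphism group then forces $\sigma$ to have finite order. The only cosmetic differences are that you work over the full fixed field $F$ rather than $K(f)$, and you justify $|\operatorname{Aut}_F(E)|\le[E:F]$ via Dedekind's lemma, which the paper simply cites as standard.
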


\begin{proof}
Suppose that $f\in E^{\langle\sigma\rangle}\setminus K$. Since $K$ is
algebraically closed in $E$, the element $f$ is transcendental over
$K$. Thus, $\operatorname{trdeg}_K K(f)=1=\operatorname{trdeg}_K E$,
which implies that $E/K(f)$ is algebraic. Since $E/K$ is finitely generated, this
algebraic extension is finite, and hence $[E:K(f)]<\infty$. But
$\sigma$ fixes $K(f)$ pointwise, so we have 
$
\langle\sigma\rangle\leq\operatorname{Aut}_{K(f)}(E).
$
Note that the standard estimate
\begin{align*}
|\operatorname{Aut}_{K(f)}(E)|\leq [E:K(f)]<\infty
\end{align*}
shows that the group $\operatorname{Aut}_{K(f)}(E)$ is finite, which
contradicts the fact that $\sigma$ has infinite order. Hence, we obtain that $E^{\langle\sigma\rangle}=K$.
\end{proof}

\subsection{The invariant field and its Laurent part} We prove $\widehat\mcT$ in
\eqref{eq:I4-basic-invariant} is a basic invariant.
\begin{theorem}
\label{thm:I4-generalized-invariant-ring}
Let
$\mathcal L=\mathbb Q[x^{\pm1},y^{\pm1},z^{\pm1}]$ and
$\widehat\Gamma=\langle\widehat\mu_1,\widehat\mu_2,
\widehat\mu_3\rangle$ be given by
\eqref{eq:I4-generalized-mutations}. Then
\begin{align*}
\mcF^{\widehat\Gamma}=\mathbb Q(\widehat\mcT),\
\mcF^{\widehat\Gamma}\cap\mathcal L
=\mathbb Q[\widehat\mcT].
\end{align*}
\end{theorem}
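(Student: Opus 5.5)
The plan is to prove the field statement first, by cutting $\mcF^{\widehat\Gamma}$ down with two infinite-order elements, and then deduce the Laurent statement by a unit argument in the Laurent ring.

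\textbf{Step 1: reduce to $\mathbb Q(\widehat\mcT,z)$.} Put $t=\widehat\mcT$ and $K=\mathbb Q(t,z)$. The inclusion $\mathbb Q(t)\subseteq\mcF^{\widehat\Gamma}$ holds because $t$ is invariant under each $\widehat\mu_i$; this is the Vieta argument already used in the paper. For the reverse inclusion, note that $\sigma=\widehat\mu_1\widehat\mu_2$ fixes $z$ and $t$, so $\sigma\in\operatorname{Aut}_K(\mcF)$. By Proposition~\ref{prop:I4-regular-extension}, $\mcF/K$ is a regular function field of one variable. By Lemma~\ref{lem:I4-infinite-order}, $\sigma$ has infinite order. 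Lemma~\ref{lem:fixed-field-one-variable} then gives
\[
\mcF^{\widehat\Gamma}\subseteq \mcF^{\langle\sigma\rangle}=\mathbb Q(t,z).
\]

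\textbf{Step 2: repeat with a different variable.} Permuting the variables cyclically, which permutes $k_1,k_2,k_3$ and leaves the shape of $\widehat\mcT$ and of the mutations unchanged, we apply the same argument to $\sigma'=\widehat\mu_2\widehat\mu_3$. This element fixes $x$ and $t$. The analogue of Proposition~\ref{prop:I4-regular-extension} shows that $\mcF/\mathbb Q(t,x)$ is regular of transcendence degree one; its discriminant computation is symmetric in the parameters. The analogue of Lemma~\ref{lem:I4-infinite-order}, proved by the same degree-growth recursion with the roles of the variables rotated, shows that $\sigma'$ has infinite order. Together these give $\mcF^{\widehat\Gamma}\subseteq\mathbb Q(t,x)$.

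\textbf{Step 3: intersect.} Since $y$ is algebraic over $\mathbb Q(t,x,z)$ via the quadratic relation, the elements $t,x,z$ are algebraically independent. Hence $\mathbb Q(t,x,z)$ is a purely transcendental field in three variables, and inside it $\mathbb Q(t,z)\cap\mathbb Q(t,x)=\mathbb Q(t)$; one can see this by writing an element in lowest terms over $\mathbb Q(t)[x,z]$ and noting that it cannot involve $x$ or $z$. This proves $\mcF^{\widehat\Gamma}=\mathbb Q(\widehat\mcT)$.

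An alternative at this step is to take $F(t,z)\in\mathbb Q(t,z)$ fixed by $\widehat\mu_3$ and use $z\neq z'$. The intersection argument is cleaner, though. I expect the main obstacle to be verifying carefully that the rotated versions of Proposition~\ref{prop:I4-regular-extension} and Lemma~\ref{lem:I4-infinite-order} really hold. In particular, the degree recursion for $\widehat\mu_2\widehat\mu_3$ specialized at $x=1$ must give strictly growing degrees, which it should by the identical computation.

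\textbf{Step 4: the Laurent part.} Let $f=p(t)/q(t)\in\mathcal L$ with $p,q\in\mathbb Q[t]$ coprime, and suppose $\deg q\geq1$. Pick a root $c\in\overline{\mathbb Q}$ of $q$. Over $\overline{\mathbb Q}[x^{\pm1},y^{\pm1},z^{\pm1}]$ we then have $p(t)=(t-c)g$ for a Laurent polynomial $g$. Writing $p(t)=p(c)+(t-c)h(t)$, this becomes $(t-c)(g-h(t))=p(c)$. Here $p(c)\neq0$ by coprimality, and $h(t)$ is Laurent because $t$ is. So $t-c$ would be a unit in the Laurent ring. But
\[
t-c=\frac{x^2+y^2+z^2+k_1yz+k_2xz+k_3xy-cxyz}{xyz}
\]
is not a monomial, whereas the units of a Laurent polynomial ring over a field are exactly the nonzero scalar multiples of monomials. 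This contradiction shows $q$ is constant, so $\mcF^{\widehat\Gamma}\cap\mathcal L=\mathbb Q[\widehat\mcT]$.
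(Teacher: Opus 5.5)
Your proof is correct, but it reaches $\mcF^{\widehat\Gamma}\subseteq\mathbb Q(\widehat\mcT)$ by a different route from the paper.

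\textbf{Field statement.} The paper uses the regularity/infinite-order package only once, to get $\mcF^{\widehat\Gamma}\subseteq K=\mathbb Q(t,z)$. It then removes the $z$-dependence directly with $\widehat\mu_3$:
\begin{itemize}
\item it writes $z'=\widehat\mu_3(z)=txy-z-k_1y-k_2x$;
\item it shows $z'\notin K$, because $\widehat\mu_1(z')-z'=(ty-k_2)(\widehat\mu_1(x)-x)\neq0$;
\item regularity then makes $z'$ transcendental over $K$, so $r(t,z')=r(t,z)$ forces $r$ to be independent of $z$.
\end{itemize}
You instead apply the package a second time, to $\widehat\mu_2\widehat\mu_3$, and intersect $\mathbb Q(t,z)\cap\mathbb Q(t,x)=\mathbb Q(t)$. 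That intersection is elementary once $t,x,z$ are known to be algebraically independent, which you correctly get from the quadratic relation for $y$.

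The obstacle you anticipate is not real. Under the relabeling $(X,Y,Z)=(y,z,x)$:
\begin{itemize}
\item $\widehat\mcT_{k_1,k_2,k_3}(x,y,z)=\widehat\mcT_{k_2,k_3,k_1}(X,Y,Z)$;
\item $\widehat\mu_2,\widehat\mu_3$ become $\widehat\mu_1,\widehat\mu_2$ for the parameters $(k_2,k_3,k_1)$;
\item $\mathbb Q(t,x)$ becomes $\mathbb Q(t,Z)$.
\end{itemize}
Proposition~\ref{prop:I4-regular-extension} and Lemma~\ref{lem:I4-infinite-order} are proved for arbitrary $(k_1,k_2,k_3)\in\mbN^3$, so nothing new needs checking. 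Your route trades the paper's small computation with $z'$ for a symmetry argument plus a clean intersection of two fixed fields. The paper's version is more economical in inputs; yours is more uniform.

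\textbf{Laurent part.} The paper uses B\'ezout to make $Q(\widehat\mcT)$ a unit of $\mathcal L$. It then contradicts this by evaluating at a point $(a,1,1)$ with $\widehat\mcT(a,1,1)=\alpha$, which needs $2+k_1\neq0$ to ensure $a\neq0$. You pass to $\overline{\mathbb Q}$, show that $\widehat\mcT-c$ would be a unit, and use that units of a Laurent polynomial ring over a field are scalar multiples of monomials. This avoids choosing a point and is slightly cleaner.
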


\begin{proof}
Write $t=\widehat\mcT$ and $K=\mathbb Q(t,z)$. The mutations
$\widehat\mu_1$ and $\widehat\mu_2$ fix $K$ pointwise. By
\Cref{prop:I4-regular-extension}, \Cref{lem:I4-infinite-order} and
Lemma~\ref{lem:fixed-field-one-variable}, we have
$
\mcF^{\langle\widehat\mu_1\widehat\mu_2\rangle}=K.
$

Let $f\in\mcF^{\widehat\Gamma}$. Then $f=r(t,z)$ for some
$r(t,Z)\in\mathbb Q(t)(Z)$. Let $z'=\widehat\mu_3(z)$. By
\eqref{eq:I4-basic-invariant}, we have
\begin{align*}
z'=\frac{x^2+k_3xy+y^2}{z}
=txy-z-k_1y-k_2x.
\end{align*}
Since $\widehat\mu_1$ fixes $t,y,z$, it follows that
$
\widehat\mu_1(z')-z'
=(ty-k_2)\bigl(\widehat\mu_1(x)-x\bigr)\neq0.
$
Indeed, both factors are nonzero because $x,y,z$ are algebraically
independent over $\mathbb Q$. Hence, $z'\notin K$. Since $\mcF/K$ is
regular by \Cref{prop:I4-regular-extension}, the element $z'$ is
transcendental over $K$.

The equality $\widehat\mu_3(f)=f$ gives $r(t,z')=r(t,z)$. If
$r(t,Z)$ is not constant in $Z$, write $r=G/H$ with coprime
$G,H\in\mathbb Q(t)[Z]$. Then, $z'$ is a root of the nonzero polynomial
\begin{align*}
G(Z)H(z)-G(z)H(Z)\in K[Z],
\end{align*}
contradicting the transcendence of $z'$ over $K$. Therefore, $r$ is
independent of $z$, and we obtain
\begin{align*}
\mcF^{\widehat\Gamma}=\mathbb Q(\widehat\mcT).
\end{align*}

It remains to intersect with $\mathcal L$. Let
$h(\widehat\mcT)=P(\widehat\mcT)/Q(\widehat\mcT)\in\mathcal L$, where
$P,Q\in\mathbb Q[T]$ are coprime. The B\'ezout's identity for $P$ and $Q$ shows that 
$Q(\widehat\mcT)$ is a unit of $\mathcal L$. Suppose that $Q$ is
nonconstant, and choose a root $\alpha\in\overline{\mathbb Q}$ of
$Q$. For any $a\in\overline{\mathbb Q}^{\times}$, we have
\begin{align*}
\widehat\mcT(a,1,1)
=a+(k_2+k_3)+\frac{2+k_1}{a}.
\end{align*}
Choose a root $a$ of
$
a^2+(k_2+k_3-\alpha)a+(2+k_1)=0.
$
Since $k_1\in\mbN$, its constant term is nonzero, and hence $a\neq0$.
It follows that $\widehat\mcT(a,1,1)=\alpha$, which implies that
$Q(\widehat\mcT)(a,1,1)=0$. This contradicts the fact that
$Q(\widehat\mcT)$ is a unit of $\mathcal L$. Thus, $Q$ is constant,
and consequently
\begin{align*}
\mathbb Q(\widehat\mcT)\cap\mathcal L
=\mathbb Q[\widehat\mcT].
\end{align*}
\end{proof}

\section{On a conjecture of Chen-Li}
\label{sec:proof-chen-li-conjecture}
In this section, as an application of \Cref{thm:I4-generalized-invariant-ring}, we prove a conjecture proposed by Chen-Li in \cite{CL25}.
Let $\mcF=\mathbb Q(x,y,z)$ and
$\mathcal L=\mathbb Q[x^{\pm1},y^{\pm1},z^{\pm1}]$. For the irreducible sign-equivalent exchange matrix $B_i$ in \eqref{sign-equivalence class}, we denote 
$\Gamma_{B_i}=\langle\mu_1,\mu_2,\mu_3\rangle$. The three Markov-type equations in
Table~\ref{tab: classical} correspond to the
following basic Laurent mutation invariants:
\begin{equation} \label{eq:rank-3-laurent-conjecture}
\begin{aligned}
\mcT_1=\frac{x^2+y^2+z^2}{xyz},\
\mcT_2=\frac{x^2+y^4+z^4+2xy^2+2xz^2}{xy^2z^2},\
\mcT_3=\frac{x^4+y^2+z^2+2yz}{x^2yz}.
\end{aligned}
\end{equation}
Motivated by \cite[Conjecture~6.9]{CL25} about $\mcT_1,\mcT_2$ and the Laurent mutation
invariant $\mcT_3$ found in \cite{Kau24}, we formulate the
following conjecture.

\begin{conjecture}[{cf. \cite[Conjecture 6.9]{CL25}}]\label{conj: rank 3}
Up to a permutation of the variables, let $\mcT_i$ denote the one among
$\mcT_1,\mcT_2,\mcT_3$ corresponding to the exchange matrix $B_i$ in \eqref{sign-equivalence class}.
Then, every Laurent mutation invariant is
a polynomial in $\mcT_i$. That is to say,
\setcounter{equation}{6}
\begin{equation*}
\mcF^{\Gamma_{B_i}}\cap\mathcal L=\mathbb Q[\mcT_i].
\end{equation*}
\end{conjecture}

\begin{lemma}
\label{lem:finite-equivariant-descent}
Let $F/E$ be a finite field extension, and let
$G\leq\operatorname{Aut}(F)$ be a group such that
$
g(E)=E
$ for every $g\in G$.
Thus, the action of $G$ on $F$ restricts to an action on the subfield
$E$. Denote the corresponding fixed fields respectively by
$
F^G,
E^G.
$
Then, every $f\in F^G$ is algebraic over $E^G$.
\end{lemma}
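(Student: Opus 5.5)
Since $F/E$ is finite, each $f\in F^G$ is algebraic over $E$. Let $p(X)=X^d+a_{d-1}X^{d-1}+\cdots+a_0\in E[X]$ be the minimal polynomial of $f$ over $E$. The plan is to show that the coefficients $a_j$ are themselves $G$-invariant, so that $p(X)\in E^G[X]$ and $f$ is algebraic over $E^G$.

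The key step is to transport $p$ by elements of $G$. For $g\in G$, write $p^g(X)=X^d+g(a_{d-1})X^{d-1}+\cdots+g(a_0)$. Since $g(E)=E$, the polynomial $p^g$ again lies in $E[X]$. Because $g$ is a ring automorphism of $F$ and $g(f)=f$, applying $g$ to $p(f)=0$ gives $p^g(f)=0$.

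Next I would show that $p^g$ is the minimal polynomial of $f$. The restriction $g|_E$ is an automorphism of $E$, since $g^{-1}\in G$ also satisfies $g^{-1}(E)=E$. Hence $p\mapsto p^g$ is a ring automorphism of $E[X]$ preserving degrees and monicity, so it preserves irreducibility. Therefore $p^g$ is a monic irreducible polynomial in $E[X]$ that vanishes at $f$, and by uniqueness of the minimal polynomial $p^g=p$. Comparing coefficients gives $g(a_j)=a_j$ for every $j$ and every $g\in G$, that is, $a_j\in E^G$. This proves that $f$ is a root of a nonzero polynomial over $E^G$.

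There is no substantial obstacle. The one point needing care is that $g$ restricts to an automorphism of $E$ rather than merely an endomorphism, which follows because $G$ is a group. An alternative route avoiding irreducibility would use the symmetric functions of the $E$-conjugates of $f$, but the minimal-polynomial argument above is cleaner and I would use it.
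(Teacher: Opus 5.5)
Your proposal is correct and follows essentially the same route as the paper: transport the minimal polynomial $p$ of $f$ over $E$ by $g$, use $g(f)=f$ to get $p^g(f)=0$, note that $p^g$ is monic irreducible because $g|_E$ is an automorphism of $E$, and conclude $p^g=p$ by uniqueness, so the coefficients lie in $E^G$. One small remark: the hypothesis $g(E)=E$ by itself already makes $g|_E$ a bijective field homomorphism of $E$, so your appeal to $g^{-1}\in G$ is harmless but not needed.
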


\begin{proof}
Since $F/E$ is finite, the element $f$ is algebraic over $E$. Let
$p(\lambda)\in E[\lambda]$ be its monic minimal polynomial. Since
$g(E)=E$, every $g\in G$ acts coefficientwise on $E[\lambda]$. Write
\begin{align*}
p(\lambda)=\lambda^d+a_{d-1}\lambda^{d-1}+\cdots+a_0
\end{align*}
and we have
$
g(p)(\lambda)=\lambda^d+g(a_{d-1})\lambda^{d-1}+\cdots+g(a_0)
\in E[\lambda].
$
Because $f\in F^G$, we have $g^{-1}(f)=f$. Therefore,
\begin{align*}
g(p)(g(b))
&=\sum_{j=0}^d g(a_j)g(b)^j
=g(\sum_{j=0}^d a_jb^j)
=g(p(b))
\end{align*}
for every $b\in F$. Taking $b=g^{-1}(f)$ gives
\begin{align*}
g(p)(f)=g(p(g^{-1}(f)))=g(p(f))=0,
\end{align*}
where the second equality follows from $g^{-1}(f)=f$.
Moreover, $g(p)(\lambda)$ is monic and irreducible over $E$, since
$g$ is an automorphism of $E$. Thus, $g(p)(\lambda)$ is also the monic minimal
polynomial of $f$ over $E$. By its uniqueness, $g(p)=p$ for every
$g\in G$. Hence, each coefficient $a_j$ is fixed by every element of
$G$, which implies that $a_j\in E^G$. Consequently,
$p(\lambda)\in E^G[\lambda]$, which proves
that $f$ is algebraic over $E^G$.
\end{proof}

\begin{proposition}
\label{prop:classical-regularity-after-substitution}
For $i=2,3$, let $K_i=\mathbb Q(\mcT_i,z)$. Then
$\mcF/K_i$ is a regular function field of transcendence degree one.
\end{proposition}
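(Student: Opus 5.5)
We follow the template of \Cref{prop:I4-regular-extension}, adapting it to $\mcT_2$ and $\mcT_3$. Write $t_i=\mcT_i$ and $K_i=\mathbb Q(t_i,z)$. In each case we choose one variable that enters $\mcT_i$ quadratically and regard the defining relation as a quadratic polynomial in it over $K_i(\text{other variable})$.

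For $i=3$ we use $y$ as the quadratic variable over $K_3(x)$:
\begin{align*}
g_3(Y)=Y^2+(2z-t_3x^2z)Y+x^4+z^2 .
\end{align*}
For $i=2$ we use $x$ over $K_2(y)$:
\begin{align*}
g_2(X)=X^2+(2y^2+2z^2-t_2y^2z^2)X+y^4+z^4 .
\end{align*}
Transcendence degree one follows exactly as before. The field $\mcF$ is algebraic over $\mathbb Q(t_i,\cdot,z)$, where $\cdot$ is the remaining variable, so $t_i$, that variable and $z$ are algebraically independent, and $\operatorname{trdeg}_{K_i}\mcF=1$.

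For regularity we reproduce the chain of isomorphisms from the proof of \Cref{prop:I4-regular-extension}:
\begin{align*}
\mcF\otimes_{K_i}\overline{K_i}\cong \overline{K_i}(w)[V]/(g_i(V)),
\end{align*}
where $w$ is the other variable and $V$ is the quadratic one. This uses \Cref{lem:base-change-rational-function-field}. By \Cref{lem:regularity-criterion}, it then suffices to show that $g_i$ is irreducible over $\overline{K_i}(w)$, which holds exactly when its discriminant is not a square there. We check this with \Cref{lem:square-even-valuations}, by exhibiting an irreducible factor of odd order.

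The two discriminants are as follows. For $i=3$,
\begin{align*}
\Delta_3=z^2(t_3x^2-2)^2-4(x^4+z^2)=x^2\bigl((t_3^2z^2-4)x^2-4t_3z^2\bigr).
\end{align*}
The factor $x^2$ is a square. The remaining factor $(t_3^2z^2-4)x^2-4t_3z^2$ is a quadratic in $x$ with nonzero leading coefficient and nonzero constant term, so it has two distinct simple roots $\pm c$. Hence its order along $x-c$ is $1$, which is odd.

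For $i=2$, with $u=y^2$ and $s=t_2z^2-2$,
\begin{align*}
\Delta_2=(su-2z^2)^2-4(u^2+z^4)=u\bigl((s^2-4)u-4sz^2\bigr).
\end{align*}
This has a simple factor $u=y^2$, so the relevant polynomial in $y$ is
\begin{align*}
y^2\bigl((s^2-4)y^2-4sz^2\bigr).
\end{align*}
The second factor is $(s^2-4)y^2-4sz^2$. Its roots $\pm c'$ satisfy $c'^2=4sz^2/(s^2-4)\neq0$, so they are distinct and simple. Thus $\Delta_2$ has odd order along $y-c'$ and is not a square in $\overline{K_2}(y)$.

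The step we expect to need the most care is checking that these coefficients are nonzero in $K_i$: that $t_3^2z^2-4\neq0$, $t_3z\neq0$, $s\neq0$ and $s\neq\pm2$. Each of these holds because $t_i$ is transcendental over $\mathbb Q(z)$. We also need to confirm that the discriminants factor as written. This is a direct expansion, but it should be done carefully, since the paper's $\Delta$ for (I.4) was computed in a different variable.

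Once irreducibility is established, the quotient ring is a field, in particular an integral domain. By \Cref{lem:regularity-criterion}, $\mcF/K_i$ is regular for $i=2,3$.
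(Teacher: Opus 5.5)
Your proposal is correct and follows the paper's proof essentially verbatim. It uses the same choice of quadratic variable ($x$ over $K_2(y)$, $y$ over $K_3(x)$) and the same discriminants: your $y^2\bigl((s^2-4)y^2-4sz^2\bigr)$ with $s=t_2z^2-2$ equals the paper's $y^2z^2\bigl((t_2^2z^2-4t_2)y^2+8-4t_2z^2\bigr)$. It then concludes with the same odd-order, irreducibility and regularity-criterion argument, and you supply the checks the paper only indicates by ``an argument similar to'' the (I.4) case: nonvanishing of $s$, $s^2-4$ and $t_3^2z^2-4$, and the simple nonzero roots $\pm c$.
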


\begin{proof}
Let $t=\mcT_i$. For $i=2$, take $(U,W)=(x,y)$, while for $i=3$, take
$(U,W)=(y,x)$. In the two respective cases, $U$ satisfies the
quadratic equation $f_i(U)=0$ listed in
Table~\ref{tab:quadratic-models-after-substitution}.
\begin{table}[htbp]
\centering
\begin{tabular}{c l}
\toprule
$i$ & $f_i(U)$\\
\midrule
$2$ & $U^2+(2W^2+2z^2-tW^2z^2)U+W^4+z^4$\\
$3$ & $U^2+(2z-tzW^2)U+W^4+z^2$\\
\bottomrule
\end{tabular}
\par\vspace{0.4em}
\caption{Quadratic equations associated with $\mcT_2$ and $\mcT_3$}
\label{tab:quadratic-models-after-substitution}
\end{table}
\par
Hence, $\mcF=\mbQ(U,W,z)$ is algebraic over $\mathbb Q(t,W,z)$,
which implies that $t,W,z$ are algebraically independent and
$\operatorname{trdeg}_{K_i}\mcF=1$.
Let $\overline{K_i}$ be an algebraic closure of $K_i$. The
discriminants of the above quadratics are listed in
Table~\ref{tab:quadratic-discriminants-after-substitution}.
\begin{table}[H]
\centering
\begin{tabular}{c l}
\toprule
$i$ & $\Delta_i(W)$\\
\midrule
$2$ & $W^2z^2\bigl((t^2z^2-4t)W^2+8-4tz^2\bigr)$\\
$3$ & $W^2\bigl((t^2z^2-4)W^2-4tz^2\bigr)$\\
\bottomrule
\end{tabular}
\par\vspace{0.4em}
\caption{Discriminants of the quadratic equations in
Table~\ref{tab:quadratic-models-after-substitution}}
\label{tab:quadratic-discriminants-after-substitution}
\end{table}
\par
By an argument similar to that used in the proof of \Cref{prop:I4-regular-extension}, one can show that $\Delta_i(W)$ is not a square in $\overline{K_i}(W)$ and $f_i(X)$ is irreducible over
$\overline{K_i}(W)$ (and hence over $K_i(W)$). Moreover, we have 
\begin{align*}
\mcF\otimes_{K_i}\overline{K_i}
\cong\overline{K_i}(W)[X]/(f_i(X)).
\end{align*}
The right hand side is a field, so Lemma~\ref{lem:regularity-criterion}
proves that $\mcF/K_i$ is regular for every $i\in\{2,3\}$.
\end{proof}

\begin{theorem}
\label{thm:classical-from-I4}
For each $i\in\{1,2,3\}$, we have
\begin{align*}
\mcF^{\Gamma_{B_i}}=\mathbb Q(\mcT_i),\
\mcF^{\Gamma_{B_i}}\cap\mathcal L=\mathbb Q[\mcT_i].
\end{align*}
In particular,
\Cref{conj: rank 3} holds.
\end{theorem}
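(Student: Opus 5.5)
The plan is to mirror the proof of \Cref{thm:I4-generalized-invariant-ring}, handling $i=1$ directly and $i=2,3$ through the regularity established in \Cref{prop:classical-regularity-after-substitution}.

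For $i=1$, the Markov mutations $\mu_j$ are exactly the generalized mutations \eqref{eq:I4-generalized-mutations} with $k_1=k_2=k_3=0$. Their degree matrix is $R=2I_3$ with $Z_j(u)=1+u^2$, so $\mcT_1=\widehat\mcT_{0,0,0}$ and $\Gamma_{B_1}=\widehat\Gamma$. The claim is then literally \Cref{thm:I4-generalized-invariant-ring}.

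For $i=2,3$, I will run the same three steps.

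\textbf{Step 1.} Set $K_i=\mathbb Q(\mcT_i,z)$. The two mutations other than $\mu_3$ (after the permutation putting $z$ in the third slot) fix $z$ and $\mcT_i$, so they lie in $\operatorname{Aut}_{K_i}(\mcF)$.

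\textbf{Step 2.} Show their product $\sigma=\mu_1\mu_2$ has infinite order by a degree-growth argument as in \Cref{lem:I4-infinite-order}. Specialize $z=1$ and $(x,y)=(s,1)$, then track $\deg_s$ of the iterates. For $i=3$ the recursion is $y_{n+1}=(x_n^4+1)/y_n$ and $x_{n+1}=(y_{n+1}+1)/x_n$. An alternative route is to use $S$ and \Cref{contract} to reduce to the generalized type-(I.2) mutations. In either case one verifies by induction that the degrees grow linearly and unboundedly, for instance $\deg x_n$ strictly increasing. Combining \Cref{prop:classical-regularity-after-substitution} with Lemma~\ref{lem:fixed-field-one-variable} then gives $\mcF^{\langle\sigma\rangle}=K_i$.

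\textbf{Step 3.} Any $f\in\mcF^{\Gamma_{B_i}}$ can therefore be written $f=r(\mcT_i,z)$. Put $z'=\mu_3(z)$. From the quadratic model in Table~\ref{tab:quadratic-models-after-substitution}, $z'$ equals a polynomial expression minus $z$. For example, for $i=3$ we get $z'=\mcT_3x^2y-2y-z$. Since $\mu_1$ moves $x$, this gives $\mu_1(z')\neq z'$, so $z'\notin K_i$. By regularity, $z'$ is then transcendental over $K_i$. The identity $r(\mcT_i,z')=r(\mcT_i,z)$ now forces $r$ to be independent of $Z$, exactly as before, so $\mcF^{\Gamma_{B_i}}=\mathbb Q(\mcT_i)$.

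The direction is chosen so that $\mu_1$ acts on the variable appearing in $z'$. For $i=2$ the roles of the variables must be matched to the table, with $(U,W)=(x,y)$ and the $\mu_3$-expression read off analogously. Lemma~\ref{lem:finite-equivariant-descent} serves as a fallback: if the direct argument were awkward for some $i$, one could pass through $S$ and the field $\mathbb Q(x^2,y,z)\subset\mcF$, obtaining algebraicity of $f$ over $\mathbb Q(\widehat\mcT)$. Algebraic closedness of $\mathbb Q(\mcT_i)$ in $\mcF$ would then finish.

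For the Laurent part, suppose $P(\mcT_i)/Q(\mcT_i)\in\mathcal L$ with $P,Q$ coprime. Bézout then makes $Q(\mcT_i)$ a unit of $\mathcal L$, so it never vanishes on the torus. To get a contradiction for nonconstant $Q$, exhibit a torus point where $\mcT_i$ takes an arbitrary value $\alpha\in\overline{\mathbb Q}$. For $i=3$, $\mcT_3(a,1,1)=(a^4+4)/a^2$, and $a^4-\alpha a^2+4=0$ has a nonzero root. For $i=2$, $\mcT_2(a,1,1)=(a^2+4a+2)/a$, and $a^2+(4-\alpha)a+2=0$ has a nonzero root.

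I expect the main obstacle to be Step 2: confirming that the degree recursion for the classical mutations really grows with no cancellation. Cancellation could arise because the exchange polynomials such as $x^4+1$ are not monic-dominated in the same way, so I will check the leading-term bookkeeping carefully, possibly choosing a different specialization to keep the degrees positive.
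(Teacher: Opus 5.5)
Your proof is correct. For $i=1$ it coincides with the paper's proof; for $i=2,3$ it takes a different route.

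\textbf{The paper's route.} The paper never studies the dynamics of $\Gamma_{B_i}$ itself. It uses $S_2(x,y,z)=(x,y^2,z^2)$, $S_3(x,y,z)=(x^2,y,z)$ and the compatibilities $\widehat\mu_j\circ S_i=S_i\circ\mu_j$, with $(k_1,k_2,k_3)=(0,2,2)$, resp.\ $(2,0,0)$. From these it notes that $\Gamma_{B_i}$ preserves $E_2=\mathbb Q(x,y^2,z^2)$, resp.\ $E_3=\mathbb Q(x^2,y,z)$, on which \Cref{thm:I4-generalized-invariant-ring} gives $E_i^{\Gamma_{B_i}}=\mathbb Q(\mcT_i)$. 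Then:
\begin{enumerate}
\item \Cref{lem:finite-equivariant-descent} makes every invariant $f$ algebraic over $\mathbb Q(\mcT_i)$;
\item regularity of $\mcF/K_i$ forces $f\in K_i=\mathbb Q(\mcT_i)(z)$;
\item algebraic closedness of $\mathbb Q(\mcT_i)$ in $\mathbb Q(\mcT_i)(z)$ removes $z$.
\end{enumerate}
This is exactly what you list as your fallback.

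\textbf{Your route and the trade-off.} Your main argument instead reruns the proof of \Cref{thm:I4-generalized-invariant-ring} directly for $\Gamma_{B_i}$. It shares only \Cref{prop:classical-regularity-after-substitution} with the paper. The paper's route is more economical: it needs no new infinite-order computation and no $\mu_3$-step, and it makes the classical theorem a genuine corollary of the generalized one. Yours is self-contained and independent of the $S_i$-compatibilities, at the cost of a fresh degree count.

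\textbf{The degree count.} It does go through, and your cancellation worry is unfounded. Since $\deg_s$ is minus the valuation at infinity, $\deg_s(r+1)=\deg_s r$ whenever $\deg_s r>0$. With $z=1$ and $(x,y)=(s,1)$:
\begin{itemize}
\item for $i=3$ one gets $\deg_s x_n=2n+1$ and $\deg_s y_n=4n$;
\item for $i=2$, from $y_{n+1}=(x_n+1)/y_n$ and $x_{n+1}=(y_{n+1}^4+1)/x_n$, one gets $\deg_s x_n=2n+1$ and $\deg_s y_n=n$.
\end{itemize}
Your explicit torus values $\mcT_3(a,1,1)=(a^4+4)/a^2$ and $\mcT_2(a,1,1)=(a^2+4a+2)/a$ also make precise the Laurent step that the paper only calls analogous.

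\textbf{A small inaccuracy.} For $i=2$, $z'=\mu_3(z)=(x+y^2)/z$ is not ``a polynomial expression minus $z$''; only $z'^2=\mcT_2xy^2-2x-z^2$ has that form. This does no harm, because all you need is $\mu_1(z')-z'=(\mu_1(x)-x)/z\neq0$, which holds directly.
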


\begin{proof}
For $i=1$, take $(k_1,k_2,k_3)=(0,0,0)$ in
\eqref{eq:I4-basic-invariant}. Then $\widehat\mcT_{0,0,0}=\mcT_1$, and the
generalized mutations in \eqref{eq:I4-generalized-mutations} reduce to
the classical mutations associated with $B_1$. Therefore, the assertion
follows directly from \Cref{thm:I4-generalized-invariant-ring}.

For $i=2$, let
$
S_2(x,y,z)=(x,y^2,z^2).
$
Take $(k_1,k_2,k_3)=(0,2,2)$. Then
\begin{align*}
\widehat\mcT_{0,2,2}(S_2(x,y,z))
&=\frac{x^2+y^4+z^4+2xz^2+2xy^2}{xy^2z^2}
=\mcT_2.
\end{align*}
By a direct calculation, the corresponding generalized and classical
mutations are compatible through $S_2$, namely
$\widehat\mu_j\circ S_2=S_2\circ\mu_j$ for $j=1,2,3$.
Thus, the subfield
$E_2=\mathbb Q(x,y^2,z^2)$ is preserved by $\Gamma_{B_2}$, and
\Cref{thm:I4-generalized-invariant-ring} gives
\begin{align*}
E_2^{\Gamma_{B_2}}=\mathbb Q(\mcT_2).
\end{align*}
Since $\mcF/E_2$ is finite, every element of
$\mcF^{\Gamma_{B_2}}$ is algebraic over $\mathbb Q(\mcT_2)$ by
Lemma~\ref{lem:finite-equivariant-descent}.

For $i=3$, define
$
S_3(x,y,z)=(x^2,y,z).
$
Taking $(k_1,k_2,k_3)=(2,0,0)$ gives
\begin{align*}
\widehat\mcT_{2,0,0}(S_3(x,y,z))
&=\frac{x^4+y^2+z^2+2yz}{x^2yz}
=\mcT_3.
\end{align*}
Similarly, we have 
$\widehat\mu_j\circ S_3=S_3\circ\mu_j$ for $j=1,2,3$; see also \Cref{contract}. Thus, the subfield $E_3=\mathbb Q(x^2,y,z)$ is preserved by $\Gamma_{B_3}$, and
\Cref{thm:I4-generalized-invariant-ring} gives
\begin{align*}
E_3^{\Gamma_{B_3}}=\mathbb Q(\mcT_3).
\end{align*}
Again, $\mcF/E_3$ is finite, and
Lemma~\ref{lem:finite-equivariant-descent} shows that every element of
$\mcF^{\Gamma_{B_3}}$ is algebraic over $\mathbb Q(\mcT_3)$.

Fix $i\in\{2,3\}$ and let
$f\in\mcF^{\Gamma_{B_i}}$. By the preceding application of
Lemma~\ref{lem:finite-equivariant-descent}, the element $f$ is
algebraic over $\mathbb Q(\mcT_i)$. Let
$K_i=\mathbb Q(\mcT_i,z)$. By
\Cref{prop:classical-regularity-after-substitution}, the extension
$\mcF/K_i$ is regular. Since $\mathbb Q(\mcT_i)\subseteq K_i$, the
element $f$ is also algebraic over $K_i$. Regularity implies that
$K_i$ is algebraically closed in $\mcF$, and hence $f\in K_i$.

It remains to show that $f$ is independent of $z$. Let
$L_i=\mathbb Q(\mcT_i)$ and then $K_i=L_i(z)$. Note that $z$ is
transcendental over $L_i$. Suppose, to the contrary, that $f\notin
L_i$. Since $f\in K_i=L_i(z)$, the element $f$ is a nonconstant
rational function of $z$. Writing
$f=P(z)/Q(z)$ with $P,Q\in L_i[Z]$, we see that $z$ is a root of
$
P(Z)-fQ(Z)
$
over $L_i(f)$. This polynomial is nonzero because
$f\notin L_i$. Thus, $z$ is algebraic over $L_i(f)$. On the other
hand, we already know from Lemma~\ref{lem:finite-equivariant-descent}
that $f$ is algebraic over $L_i$. Hence, $L_i(f)/L_i$ is algebraic,
and the transitivity of algebraic extensions would imply that $z$ is
algebraic over $L_i$. This contradicts the transcendence of $z$ over
$L_i$, which implies that  
$f\in L_i=\mathbb Q(\mcT_i)$. Since $f$ is an arbitrary element of
$\mcF^{\Gamma_{B_i}}$, we obtain
\begin{align*}
\mcF^{\Gamma_{B_i}}=\mathbb Q(\mcT_i),
\quad i=2,3.
\end{align*}
Moreover, by arguments analogous to those in \Cref{thm:I4-generalized-invariant-ring}, we have  
\begin{align*}
\mathbb Q(\mcT_i)\cap\mathcal L=\mathbb Q[\mcT_i].
\end{align*}
\end{proof}

\begin{remark}
In particular, \Cref{thm:classical-from-I4} also answers the
question posed in \cite[Question~2.29]{CL24}, since rank $2$ cluster
algebras of affine type may be viewed as degenerations of the rank $3$
cases considered above, obtained by specializing one variable to $1$.
\end{remark}

%=======================================
\section{Further discussion}\label{sec: further-questions}
For generalized cluster algebras and generalized Markov equations, we conclude with an open question and two ordered uniqueness conjectures.

\begin{question}\label{question: open}
	For the following class of Diophantine equations
	\begin{align*}
x^2+y^2+z^2+k_1yz+k_2xz+k_3xy=kxyz,
	\end{align*} where $k_1,k_2,k_3\in \mbN$, and $k\in \mbN_+$, classify all the choices of $k$ such that this equation has positive integer solutions. Moreover, find out all the $\widehat{\Gamma}$-orbits of the solutions.
\end{question}
\begin{remark} There are several observations and computational experiments as follows.

\begin{enumerate}
	\item 
In fact, by \cite{GM23}, when $k=3+k_1+k_2+k_3$, all the positive integer solutions to 
\begin{align}
x^2+y^2+z^2+k_1yz+k_2xz+k_3xy=(3+k_1+k_2+k_3)xyz\label{equ: gme}
\end{align}
lie in the unique orbit $\widehat{\Gamma}[(1,1,1)]$. However, for general $k$, it is still a difficult but interesting problem since we are even not clear about the existence of the solutions. 
	\item We consider the following Diophantine equation which does not belong to the above class
	\begin{align*}x^2+y^2+z^2+2yz+2k_2xz+2k_3xy=(2+k_2+k_3)xyz.
	\end{align*} A direct calculation shows that $(4,2,2)$ is a positive integer solution. When $k_2=2,k_3=0$, there is another solution $(2,3,1)$. Moreover, different choices of $k_2$ and $k_3$ may lead to different orbits of solutions. It is natural to study how $k_2$ and $k_3$ control the orbits.
	\end{enumerate}
\end{remark}
Finally, motivated by the classical uniqueness conjecture for the Markov equation, the following ordered uniqueness conjecture has been proposed for generalized Markov equations \eqref{equ: gme} in the symmetric case $k_1=k_2=k_3$. However, without this assumption, the analogous statements may fail as illustrated below.
\begin{conjecture}[{\cite{GM24} \& \cite{Fro13}}] \label{conj: CJ} Suppose that $k_1=k_2=k_3$. If $(A,B,C)$ and $(A,B^{\prime},C^{\prime})$ are two positive integer solutions to the generalized Markov equation \eqref{equ: gme} with $A\geq B\geq C$ and $A\geq B^{\prime}\geq C^{\prime}$, then $B=B^{\prime}$ and $C=C^{\prime}$.
\end{conjecture}

\begin{conjecture}\label{conj: uniqueness-CH}  For $k=2,4,5$ in Markov-type equation \eqref{equ: CH}, the following statements hold by symmetry in the variables $y$ and $z$. 

\begin{enumerate}
	\item If $(A,B,C)$ and $(A,B^{\prime},C^{\prime})$ are two positive integer solutions with $A\geq B\geq C$ and $A\geq B^{\prime}\geq C^{\prime}$, then $B=B^{\prime}$ and $C=C^{\prime}$.
	\item If $(A,B,C)$ and $(A^{\prime},B,C^{\prime})$ are two positive integer solutions with $B\geq A\geq C$ and $B\geq A^{\prime}\geq C^{\prime}$, then $A=A^{\prime}$ and $C=C^{\prime}$.
	\item If $(A,B,C)$ and $(A^{\prime},B,C^{\prime})$ are two positive integer solutions with $B\geq C\geq A$ and $B\geq C^{\prime}\geq A^{\prime}$, then $A=A^{\prime}$ and $C=C^{\prime}$.
\end{enumerate}
\end{conjecture}

The restriction on $k$ is necessary: when $k=3$, $(75,13,2)$ and $(75,29,1)$ are distinct positive integer solutions to \eqref{equ: CH} satisfying $75>13>2$ and $75>29>1$, respectively. By \Cref{prop: scaling correspondence}, multiplying these triples by $3$ also gives a counterexample for $k=1$. To some degree, this also reflects the underlying distinction in orbit structure: the cases $k=1,3$ have multiple orbits, whereas each of $k=2,4,5$ has a unique orbit.
%=======================================
\clearpage
\section{Appendix: orbits of solutions}\label{sec:orbit-appendix}
In this section, the appendix illustrates the orbits of solutions that occur in  \Cref{thm: main}. These diagrams display the initial branching of each orbit and indicate how further solutions are generated by successive generalized cluster mutations. Since the variables $y$ and $z$ are symmetric, we display the orbits only up to the interchange of $y$ and $z$.

\begingroup
\setlength{\intextsep}{2pt}
\setlength{\abovedisplayskip}{0pt}
\setlength{\belowdisplayskip}{0pt}
\setlength{\abovecaptionskip}{2pt}
\setlength{\belowcaptionskip}{0pt}
\setlength{\captionindent}{0pt}
\makeatletter
\def\@captionfont{\small}
\makeatother
\noindent\begin{minipage}[t]{0.49\textwidth}
\centering
\begin{align*}
\scalebox{0.64}{
\begin{xy}(0,0)*+{(9,6,3)}="0",(20,20)*+{(9,6,3)}="1",(20,0)*+{(9,15,3)}="1'",(20,-20)*+{(9,6,39)}="1''",(45,50)*+{(9,15,3)}="20",(45,30)*+{(9,6,39)}="21",(45,10)*+{(36,15,3)}="22",(45,-10)*+{(9,15,102)}="23",(45,-30)*+{(225,6,39)}="24",(45,-50)*+{(9,267,39)}="25",(80,55)*+{(36,15,3)\cdots}="40",(80,45)*+{(9,15,102)\cdots}="41", (80,35)*+{(225,6,39)\cdots}="42", (80,25)*+{(9,267,39)\cdots}="43", (80,15)*+{(36,87,3)\cdots}="44", (80,5)*+{(36,15,507)\cdots}="45", (80,-5)*+{(1521,15,102)\cdots}="46", (80,-15)*+{(9,699,102)\cdots}="47", (80,-25)*+{(225,8691,39)\cdots}="48", (80,-35)*+{(225,6,1299)\cdots}="49", (80,-45)*+{(10404,267,39)\cdots}="410", (80,-55)*+{(9,267,1830)\cdots}="411", \ar@{-}^{\mu_1}"0";"1"\ar@{-}^{\mu_2}"0";"1'"\ar@{-}_{\mu_3}"0";"1''"\ar@{-}^{\mu_2}"1";"20"\ar@{-}_{\mu_3}"1";"21"\ar@{-}^{\mu_1}"1'";"22"\ar@{-}_{\mu_3}"1'";"23"\ar@{-}^{\mu_1}"1''";"24"\ar@{-}_{\mu_2}"1''";"25"\ar@{-}^{\mu_1}"20";"40"\ar@{-}_{\mu_3}"20";"41"\ar@{-}^{\mu_1}"21";"42"\ar@{-}_{\mu_2}"21";"43"\ar@{-}^{\mu_2}"22";"44"\ar@{-}_{\mu_3}"22";"45"\ar@{-}^{\mu_1}"23";"46"\ar@{-}_{\mu_2}"23";"47"\ar@{-}^{\mu_2}"24";"48"\ar@{-}_{\mu_3}"24";"49"\ar@{-}^{\mu_1}"25";"410"\ar@{-}_{\mu_3}"25";"411"
\end{xy}}\notag
\end{align*}
\captionof{figure}{The orbit $\widehat{\Gamma}[(9,6,3)]$ for $k=1$}
\label{1 orbit for $k=1$}
\end{minipage}\hfill%
\begin{minipage}[t]{0.49\textwidth}
\centering
\begin{align*}
\scalebox{0.64}{
\begin{xy}(0,0)*+{(8,4,4)}="0",(20,20)*+{(8,4,4)}="1",(20,0)*+{(8,20,4)}="1'",(20,-20)*+{(8,4,20)}="1''",(45,50)*+{(8,20,4)}="20",(45,30)*+{(8,4,20)}="21",(45,10)*+{(72,20,4)}="22",(45,-10)*+{(8,20,116)}="23",(45,-30)*+{(72,4,20)}="24",(45,-50)*+{(8,116,20)}="25",(80,55)*+{(72,20,4)\cdots}="40",(80,45)*+{(8,20,116)\cdots}="41", (80,35)*+{(72,4,20)\cdots}="42", (80,25)*+{(8,140,20)\cdots}="43", (80,15)*+{(72,260,4)\cdots}="44", (80,5)*+{(72,20,1396)\cdots}="45", (80,-5)*+{(2312,20,116)\cdots}="46", (80,-15)*+{(8,676,116)\cdots}="47", (80,-25)*+{(72,1396,20)\cdots}="48", (80,-35)*+{(72,4,260)\cdots}="49", (80,-45)*+{(2312,116,20)\cdots}="410", (80,-55)*+{(8,116,676)\cdots}="411", \ar@{-}^{\mu_1}"0";"1"\ar@{-}^{\mu_2}"0";"1'"\ar@{-}_{\mu_3}"0";"1''"\ar@{-}^{\mu_2}"1";"20"\ar@{-}_{\mu_3}"1";"21"\ar@{-}^{\mu_1}"1'";"22"\ar@{-}_{\mu_3}"1'";"23"\ar@{-}^{\mu_1}"1''";"24"\ar@{-}_{\mu_2}"1''";"25"\ar@{-}^{\mu_1}"20";"40"\ar@{-}_{\mu_3}"20";"41"\ar@{-}^{\mu_1}"21";"42"\ar@{-}_{\mu_2}"21";"43"\ar@{-}^{\mu_2}"22";"44"\ar@{-}_{\mu_3}"22";"45"\ar@{-}^{\mu_1}"23";"46"\ar@{-}_{\mu_2}"23";"47"\ar@{-}^{\mu_2}"24";"48"\ar@{-}_{\mu_3}"24";"49"\ar@{-}^{\mu_1}"25";"410"\ar@{-}_{\mu_3}"25";"411"
\end{xy}}\notag
\end{align*}
\captionof{figure}{The orbit $\widehat{\Gamma}[(8,4,4)]$ for $k=1$}
\label{2 orbit for $k=1$}
\end{minipage}
\par\vspace{16pt}

\noindent\begin{minipage}[t]{0.49\textwidth}
\centering
\begin{align*}
\scalebox{0.64}{
\begin{xy}(0,0)*+{(5,5,5)}="0",(20,20)*+{(20,5,5)}="1",(20,0)*+{(5,10,5)}="1'",(20,-20)*+{(5,5,10)}="1''",(45,50)*+{(20,85,5)}="20",(45,30)*+{(20,5,85)}="21",(45,10)*+{(45,10,5)}="22",(45,-10)*+{(5,10,25)}="23",(45,-30)*+{(45,5,10)}="24",(45,-50)*+{(5,25,10)}="25",(80,55)*+{(405,85,5)\cdots}="40",(80,45)*+{(20,85,1525)\cdots}="41", (80,35)*+{(405,5,85)\cdots}="42", (80,25)*+{(20,1525,85)\cdots}="43", (80,15)*+{(45,205,5)\cdots}="44", (80,5)*+{(45,10,425)\cdots}="45", (80,-5)*+{(245,10,25)\cdots}="46", (80,-15)*+{(5,65,25)\cdots}="47", (80,-25)*+{(45,425,10)\cdots}="48", (80,-35)*+{(45,5,205)\cdots}="49", (80,-45)*+{(245,25,10)\cdots}="410", (80,-55)*+{(5,25,65)\cdots}="411", \ar@{-}^{\mu_1}"0";"1"\ar@{-}^{\mu_2}"0";"1'"\ar@{-}_{\mu_3}"0";"1''"\ar@{-}^{\mu_2}"1";"20"\ar@{-}_{\mu_3}"1";"21"\ar@{-}^{\mu_1}"1'";"22"\ar@{-}_{\mu_3}"1'";"23"\ar@{-}^{\mu_1}"1''";"24"\ar@{-}_{\mu_2}"1''";"25"\ar@{-}^{\mu_1}"20";"40"\ar@{-}_{\mu_3}"20";"41"\ar@{-}^{\mu_1}"21";"42"\ar@{-}_{\mu_2}"21";"43"\ar@{-}^{\mu_2}"22";"44"\ar@{-}_{\mu_3}"22";"45"\ar@{-}^{\mu_1}"23";"46"\ar@{-}_{\mu_2}"23";"47"\ar@{-}^{\mu_2}"24";"48"\ar@{-}_{\mu_3}"24";"49"\ar@{-}^{\mu_1}"25";"410"\ar@{-}_{\mu_3}"25";"411"
\end{xy}}\notag
\end{align*}
\captionof{figure}{The orbit $\widehat{\Gamma}[(5,5,5)]$ for $k=1$}
\label{3 orbit for $k=1$}
\end{minipage}\hfill%
\begin{minipage}[t]{0.49\textwidth}
\centering
\begin{align*}
\scalebox{0.64}{
\begin{xy}(0,0)*+{(4,2,2)}="0",(20,20)*+{(4,2,2)}="1",(20,0)*+{(4,10,2)}="1'",(20,-20)*+{(4,2,10)}="1''",(45,50)*+{(4,10,2)}="20",(45,30)*+{(4,2,10)}="21",(45,10)*+{(36,10,2)}="22",(45,-10)*+{(4,10,58)}="23",(45,-30)*+{(36,2,10)}="24",(45,-50)*+{(4,58,10)}="25",(80,55)*+{(36,10,2)\cdots}="40",(80,45)*+{(4,10,58)\cdots}="41", (80,35)*+{(36,2,10)\cdots}="42", (80,25)*+{(4,58,10)\cdots}="43", (80,15)*+{(36,130,2)\cdots}="44", (80,5)*+{(36,10,698)\cdots}="45", (80,-5)*+{(1156,10,58)\cdots}="46", (80,-15)*+{(4,338,58)\cdots}="47", (80,-25)*+{(36,698,10)\cdots}="48", (80,-35)*+{(36,2,130)\cdots}="49", (80,-45)*+{(1156,58,10)\cdots}="410", (80,-55)*+{(4,58,338)\cdots}="411", \ar@{-}^{\mu_1}"0";"1"\ar@{-}^{\mu_2}"0";"1'"\ar@{-}_{\mu_3}"0";"1''"\ar@{-}^{\mu_2}"1";"20"\ar@{-}_{\mu_3}"1";"21"\ar@{-}^{\mu_1}"1'";"22"\ar@{-}_{\mu_3}"1'";"23"\ar@{-}^{\mu_1}"1''";"24"\ar@{-}_{\mu_2}"1''";"25"\ar@{-}^{\mu_1}"20";"40"\ar@{-}_{\mu_3}"20";"41"\ar@{-}^{\mu_1}"21";"42"\ar@{-}_{\mu_2}"21";"43"\ar@{-}^{\mu_2}"22";"44"\ar@{-}_{\mu_3}"22";"45"\ar@{-}^{\mu_1}"23";"46"\ar@{-}_{\mu_2}"23";"47"\ar@{-}^{\mu_2}"24";"48"\ar@{-}_{\mu_3}"24";"49"\ar@{-}^{\mu_1}"25";"410"\ar@{-}_{\mu_3}"25";"411"
\end{xy}}\notag
\end{align*}
\captionof{figure}{The orbit $\widehat{\Gamma}[(4,2,2)]$ for $k=2$}
\label{1 orbit for $k=2$}
\end{minipage}

\clearpage
\noindent\begin{minipage}[t]{0.49\textwidth}
\centering
\begin{align*}
\scalebox{0.64}{
\begin{xy}(0,0)*+{(3,2,1)}="0",(20,20)*+{(3,2,1)}="1",(20,0)*+{(3,5,1)}="1'",(20,-20)*+{(3,2,13)}="1''",(45,50)*+{(3,5,1)}="20",(45,30)*+{(3,2,13)}="21",(45,10)*+{(12,5,1)}="22",(45,-10)*+{(3,5,34)}="23",(45,-30)*+{(75,2,13)}="24",(45,-50)*+{(3,89,13)}="25",(80,55)*+{(12,5,1)\cdots}="40",(80,45)*+{(3,5,34)\cdots}="41", (80,35)*+{(75,2,13)\cdots}="42", (80,25)*+{(3,89,13)\cdots}="43", (80,15)*+{(12,29,1)\cdots}="44", (80,5)*+{(12,5,169)\cdots}="45", (80,-5)*+{(507,5,34)\cdots}="46", (80,-15)*+{(3,233,34)\cdots}="47", (80,-25)*+{(75,2897,13)\cdots}="48", (80,-35)*+{(75,2,433)\cdots}="49", (80,-45)*+{(3468,89,13)\cdots}="410", (80,-55)*+{(3,89,610)\cdots}="411", \ar@{-}^{\mu_1}"0";"1"\ar@{-}^{\mu_2}"0";"1'"\ar@{-}_{\mu_3}"0";"1''"\ar@{-}^{\mu_2}"1";"20"\ar@{-}_{\mu_3}"1";"21"\ar@{-}^{\mu_1}"1'";"22"\ar@{-}_{\mu_3}"1'";"23"\ar@{-}^{\mu_1}"1''";"24"\ar@{-}_{\mu_2}"1''";"25"\ar@{-}^{\mu_1}"20";"40"\ar@{-}_{\mu_3}"20";"41"\ar@{-}^{\mu_1}"21";"42"\ar@{-}_{\mu_2}"21";"43"\ar@{-}^{\mu_2}"22";"44"\ar@{-}_{\mu_3}"22";"45"\ar@{-}^{\mu_1}"23";"46"\ar@{-}_{\mu_2}"23";"47"\ar@{-}^{\mu_2}"24";"48"\ar@{-}_{\mu_3}"24";"49"\ar@{-}^{\mu_1}"25";"410"\ar@{-}_{\mu_3}"25";"411"
\end{xy}}\notag
\end{align*}
\captionof{figure}{The orbit $\widehat{\Gamma}[(3,2,1)]$ for $k=3$}
\label{1 orbit for $k=3$}
\end{minipage}\hfill%
\begin{minipage}[t]{0.49\textwidth}
\centering
\begin{align*}
\scalebox{0.64}{
\begin{xy}(0,0)*+{(2,1,1)}="0",(20,20)*+{(2,1,1)}="1",(20,0)*+{(2,5,1)}="1'",(20,-20)*+{(2,1,5)}="1''",(45,50)*+{(2,5,1)}="20",(45,30)*+{(2,1,5)}="21",(45,10)*+{(18,5,1)}="22",(45,-10)*+{(2,5,29)}="23",(45,-30)*+{(18,1,5)}="24",(45,-50)*+{(2,29,5)}="25",(80,55)*+{(18,5,1)\cdots}="40",(80,45)*+{(2,5,29)\cdots}="41", (80,35)*+{(18,1,5)\cdots}="42", (80,25)*+{(2,29,5)\cdots}="43", (80,15)*+{(18,65,1)\cdots}="44", (80,5)*+{(18,5,349)\cdots}="45", (80,-5)*+{(578,5,29)\cdots}="46", (80,-15)*+{(2,169,29)\cdots}="47", (80,-25)*+{(18,349,5)\cdots}="48", (80,-35)*+{(18,1,65)\cdots}="49", (80,-45)*+{(578,29,5)\cdots}="410", (80,-55)*+{(2,29,169)\cdots}="411", \ar@{-}^{\mu_1}"0";"1"\ar@{-}^{\mu_2}"0";"1'"\ar@{-}_{\mu_3}"0";"1''"\ar@{-}^{\mu_2}"1";"20"\ar@{-}_{\mu_3}"1";"21"\ar@{-}^{\mu_1}"1'";"22"\ar@{-}_{\mu_3}"1'";"23"\ar@{-}^{\mu_1}"1''";"24"\ar@{-}_{\mu_2}"1''";"25"\ar@{-}^{\mu_1}"20";"40"\ar@{-}_{\mu_3}"20";"41"\ar@{-}^{\mu_1}"21";"42"\ar@{-}_{\mu_2}"21";"43"\ar@{-}^{\mu_2}"22";"44"\ar@{-}_{\mu_3}"22";"45"\ar@{-}^{\mu_1}"23";"46"\ar@{-}_{\mu_2}"23";"47"\ar@{-}^{\mu_2}"24";"48"\ar@{-}_{\mu_3}"24";"49"\ar@{-}^{\mu_1}"25";"410"\ar@{-}_{\mu_3}"25";"411"
\end{xy}}\notag
\end{align*}
\captionof{figure}{The orbit $\widehat{\Gamma}[(2,1,1)]$ for $k=4$}
\label{1 orbit for $k=4$}
\end{minipage}
\par\vspace{16pt}

\noindent\begin{minipage}[t]{0.49\textwidth}
\centering
\begin{align*}
\scalebox{0.64}{
\begin{xy}(0,0)*+{(1,1,1)}="0",(20,20)*+{(4,1,1)}="1",(20,0)*+{(1,2,1)}="1'",(20,-20)*+{(1,1,2)}="1''",(45,50)*+{(4,17,1)}="20",(45,30)*+{(4,1,17)}="21",(45,10)*+{(9,2,1)}="22",(45,-10)*+{(1,2,5)}="23",(45,-30)*+{(9,1,2)}="24",(45,-50)*+{(1,5,2)}="25",(80,55)*+{(81,17,1)\cdots}="40",(80,45)*+{(4,17,305)\cdots}="41", (80,35)*+{(81,1,17)\cdots}="42", (80,25)*+{(4,305,17)\cdots}="43", (80,15)*+{(9,41,1)\cdots}="44", (80,5)*+{(9,2,85)\cdots}="45", (80,-5)*+{(49,2,5)\cdots}="46", (80,-15)*+{(1,13,5)\cdots}="47", (80,-25)*+{(9,85,2)\cdots}="48", (80,-35)*+{(9,1,41)\cdots}="49", (80,-45)*+{(49,5,2)\cdots}="410", (80,-55)*+{(1,5,13)\cdots}="411", \ar@{-}^{\mu_1}"0";"1"\ar@{-}^{\mu_2}"0";"1'"\ar@{-}_{\mu_3}"0";"1''"\ar@{-}^{\mu_2}"1";"20"\ar@{-}_{\mu_3}"1";"21"\ar@{-}^{\mu_1}"1'";"22"\ar@{-}_{\mu_3}"1'";"23"\ar@{-}^{\mu_1}"1''";"24"\ar@{-}_{\mu_2}"1''";"25"\ar@{-}^{\mu_1}"20";"40"\ar@{-}_{\mu_3}"20";"41"\ar@{-}^{\mu_1}"21";"42"\ar@{-}_{\mu_2}"21";"43"\ar@{-}^{\mu_2}"22";"44"\ar@{-}_{\mu_3}"22";"45"\ar@{-}^{\mu_1}"23";"46"\ar@{-}_{\mu_2}"23";"47"\ar@{-}^{\mu_2}"24";"48"\ar@{-}_{\mu_3}"24";"49"\ar@{-}^{\mu_1}"25";"410"\ar@{-}_{\mu_3}"25";"411"
\end{xy}}\notag
\end{align*}
\captionof{figure}{The orbit $\widehat{\Gamma}[(1,1,1)]$ for $k=5$}
\label{1 orbit for $k=5$}
\end{minipage}
\endgroup

\newpage
\begingroup
\setstretch{1.11}

\endgroup
	
\end{document}